\renewcommand*{\backref}[1]{}  
   \renewcommand*{\backrefalt}[4]{
      \ifcase #1 
         Not cited.
      \or
         Cited on page #2.
      \else
         Cited on pages #2.
      \fi}
\newcommand{\mm}{{\bullet}}
\newcommand{\ff}{{\color{cyan}\bullet}}
\newcommand{\ee}{{\color{black}\circ}}
\newtheorem*{corollary*}{Corollary}
\newtheorem{theorem}{Theorem}[section]
\newtheorem{corollary}[theorem]{Corollary}
\newtheorem{lemma}[theorem]{Lemma}
\newtheorem{proposition}[theorem]{Proposition}
\newtheorem{question}[theorem]{Question}
\newtheorem*{claim*}{Claim}
\theoremstyle{definition}
\newtheorem{definition}[theorem]{Definition}
\newtheorem{main conjecture}[theorem]{Main Conjecture}
\newtheorem*{theorem }{Theorem}
\newtheorem{example}[theorem]{Example}
\theoremstyle{remark}
\numberwithin{equation}{section}
\renewcommand*\env@matrix[1][\
arraystretch]{%
  \edef\arraystretch{#1}%
  \hskip -\arraycolsep
  \let\@ifnextchar\new@ifnextchar
  \array{*\c@MaxMatrixCols c}}
\renewcommand{\mod}{\operatorname{mod}}
\newcommand{\proj}{\operatorname{proj}}
\newcommand{\inj}{\operatorname{inj}}
\newcommand{\Fac}{\operatorname{\mathrm{Fac}}}
\newcommand{\Ext}{\operatorname{Ext}}
\newcommand{\End}{\operatorname{End}}
\newcommand{\CM}{\operatorname{CM}}
\newcommand{\fidim}{\operatorname{fidim}}
\newcommand{\fpdim}{\operatorname{fpdim}}
\newcommand{\Dom}{\operatorname{dom}}
\newcommand{\Hom}{\operatorname{Hom}}
\newcommand{\add}{\operatorname{\mathrm{add}}}
\renewcommand{\top}{\operatorname{\mathrm{top}}}
\newcommand{\rad}{\operatorname{\mathrm{rad}}}
\newcommand{\soc}{\operatorname{\mathrm{soc}}}
\newcommand{\ind}{\operatorname{\mathrm{ind}}}
\newcommand{\op}{\operatorname{\mathrm{op}}}
\renewcommand{\c}{\operatorname{\mathrm{c}}}
\renewcommand{\mod}{\operatorname{mod}}
\newcommand{\F}{\mathcal{F}}
\newcommand{\M}{\mathcal{M}}
\renewcommand{\P}{\mathcal{P}}
\newcommand{\I}{\mathcal{I}}
\newcommand{\T}{\mathcal{T}}
\newcommand{\RHom}{\operatorname{\mathsf{R}Hom}}
\newcommand{\f}{{\rm f}}
\newcommand{\m}{{\rm m}}
\newcommand{\CC}{\mathcal{C}}
\newcommand{\Db}{\mathrm{D}^{\mathrm{b}}}
\newcommand{\domdim}{\operatorname{domdim}}
\newcommand{\idim}{\operatorname{idim}}
\newcommand{\pdim}{\operatorname{pdim}}
\newcommand{\gldim}{\operatorname{gldim}}
\newcommand{\cotilt}{\operatorname{cotilt}}
\newcommand{\sm}[1]{\begin{smallmatrix}#1\end{smallmatrix}}
\newcommand{\ssp}{\;\;}
\newcommand{\gr}[1]{{\color[rgb]{0.85,0.85,0.85}#1}}
\begin{document}

\title[{Auslander-Reiten's Cohen-Macaulay algebras and contracted preprojective algebras}]{Auslander-Reiten's Cohen-Macaulay algebras and\\ contracted preprojective algebras}
\date{\today}

\subjclass[2010]{16G10, 16E10, 13C60, 18G80, 16E35}

\keywords{Cohen-Macaulay algebra, preprojective algebra, dualizing module, Cohen-Macaulay module, syzygy, finitistic dimension, simple singularity, triangulated category}

\author[Chan]{Aaron Chan}
\address[Chan]{Graduate School of Mathematics, Nagoya University, Furocho, Chikusaku, Nagoya 464-8602, Japan}
\email{aaron.kychan@gmail.com}

\author[Iyama]{Osamu Iyama}
\address[Iyama]{Graduate School of Mathematical Sciences, University of Tokyo, 3-8-1 Komaba Meguro-ku Tokyo 153-8914, Japan}
\email{iyama@ms.u-tokyo.ac.jp}

\author[Marczinzik]{Ren\'{e} Marczinzik}
\address[Marczinzik]{Mathematical Institute of the University of Bonn, Endenicher Allee 60, 53115 Bonn, Germany}
\email{marczire@math.uni-bonn.de}

\dedicatory{Dedicated to Jun-ichi Miyachi for his 65th Birthday}

\begin{abstract}
Auslander and Reiten called a finite dimensional algebra $A$ over a field \emph{Cohen-Macaulay} if there is an $A$-bimodule $W$ which gives an equivalence between the category of finitely generated $A$-modules of finite projective dimension and the category of finitely generated $A$-modules of finite injective dimension.
For example, Iwanaga-Gorenstein algebras and algebras with finitistic dimension zero on both sides are Cohen-Macaulay, and tensor products of Cohen-Macaulay algebras are again Cohen-Macaulay. They seem to be all of the known examples of Cohen-Macaulay algebras.

In this paper, we give the first non-trivial class of Cohen-Macaulay algebras by showing that all contracted preprojective algebras of Dynkin type are Cohen-Macaulay. As a consequence, for each simple singularity $R$ and a maximal Cohen-Macaulay $R$-module $M$, the stable endomorphism algebra $\underline{\End}_R(M)$ is Cohen-Macaulay. We also give a negative answer to a question of Auslander-Reiten asking whether the category $\CM A$ of Cohen-Macaulay $A$-modules coincides with the category of $d$-th syzygies, where $d\ge1$ is the injective dimension of $W$.
In fact, if $A$ is a Cohen-Macaulay algebra that is additionally $d$-Gorenstein in the sense of Auslander, then $\CM A$ always coincides with the category of $d$-th syzygies.

\end{abstract}

\maketitle
\tableofcontents
 
 \section*{Introduction}
Let $R$ be a commutative Noetherian local ring of Krull dimension $d$. Then $M\in\mod R$ is called \emph{Cohen-Macaulay} (or \emph{CM}) if the depth of $M$ coincides with the dimension of $M$, and the ring $R$ is called \emph{Cohen-Macaulay} (or \emph{CM}) if $R$ as an $R$-module is CM. In this case, a \emph{canonical} $R$-module is a CM $R$-module $\omega$ with $\dim \omega=d$ and finite injective dimension such that $\Ext^d_R(k,\omega)\simeq k$ for the residue field $k$ of $R$. It is also called a \emph{dualizing} $R$-module since it is a dualizing complex concentrated in degree zero and hence gives a duality $\RHom_R(-,\omega):\Db(\mod R)\simeq\Db(\mod R)$ \cite{H}.
A Cohen-Macaulay local ring $R$ admits a dualizing module if and only if it is a homomorphic image of a Gorenstein ring \cite[3.3.6]{BH}. In particular, each complete local Cohen-Macaulay ring admits a dualizing $R$-module. One of the important properties of a dualizing $R$-module is that it induces quasi-inverse equivalences
\[-\otimes_R\omega:\P^{<\infty}(R)\to\I^{<\infty}(R)\ \mbox{ and }\ \Hom_R(\omega,-) : \I^{< \infty}(R) \rightarrow \P^{< \infty}(R),\]
where $\P^{< \infty}(R)$ is the full subcategory of $\mod R$ consisting of modules with finite projective dimension, and $\I^{< \infty}(R)$ is the full subcategory of $\mod R$ with finite injective dimension \cite{Sh}. These equivalences have been extended to the level of derived categories and generalized to commutative rings with dualizing complexes \cite{AF}, see also \cite{IK}.

In \cite{AR2}, Auslander and Reiten introduced a non-commutative generalisation of Cohen-Macaulay rings. Their definition, generalized in \cite{BR}, is as follows:

\begin{definition}\label{define CM}
We call a (not necessarily commutative) Noetherian ring $A$ \emph{Cohen-Macaulay} if there is an $A$-bimodule $W$ which gives quasi-inverse equivalences
\[-\otimes_AW:\P^{<\infty}(A)\to\I^{<\infty}(A)\ \mbox{ and }\ \Hom_A(W,-) : \I^{< \infty}(A) \rightarrow \P^{< \infty}(A),\]
where $\P^{< \infty}(A)$ is the full subcategory of $\mod A$ consisting of modules with finite projective dimension, and $\I^{< \infty}(A)$ is the full subcategory of $\mod A$ with finite injective dimension. We call $W$ in this case a \emph{dualizing} module.
\end{definition}

For example, Iwanaga-Gorenstein rings \cite{EJ}, which are Noetherian rings $A$ such that the injective dimensions $\idim_AA$ and $\idim_{A^{\op}}A$ are finite,
are precisely Cohen-Macaulay algebras $A$ such that the $A$-bimodules $A$ give dualizing modules.
If $A$ is a commutative Noetherian complete local ring, then the notion of Cohen-Macaulay ring above coincides with the classical one 
thanks to the validity of the famous Bass conjecture (that is, a commutative local ring is CM if and only if there is a finitely generated injective module, see \cite{PS} and \cite{R}).
In \cite{AR2}, several classical results about Iwanaga-Gorenstein rings are generalized for Cohen-Macaulay rings.
We refer to \cite{AR,B,BFS,BR,BST,GN,Ni} for more results on Cohen-Macaulay rings in the sense of Definition \ref{define CM}.

From now on, we denote by $A$ a finite dimensional algebra over a field $k$. 
Recall that the \emph{finitistic projective/injective dimensions} of $A$ are defined by
\[\fpdim A:=\sup\{\pdim X\mid X\in\P^{<\infty}(A)\}\ \mbox{ and }\ \fidim A:=\sup\{\idim X\mid X\in\I^{<\infty}(A)\}.\]
Since $\fpdim A=\fidim A^{\op}$ clearly holds, we only consider $\fidim$ throughout this paper.
One of the nice properties of CM algebras $A$ is 
\begin{equation}\label{4 dimension}
\fidim A=\idim_AW=\idim_{A^{\op}}W=\fidim A^{\op},
\end{equation}
see \cite[Proposition 1.6]{AR2}. Hence the famous finitistic dimension conjecture holds true for CM algebras, and $\fidim A$ gives their important homological invariant. 

Auslander and Reiten characterised dualizing modules in terms of tilting theory: 
Recall that the set $\cotilt A$ of additive equivalence classes of cotilting $A$-modules has a natural partial order given by $T\ge U\Leftrightarrow{\rm Ext}^i_A(T,U)=0$ for all $i\ge1$. Then $\cotilt A$ has a minimal element $DA$, while we call a maximal element of $\cotilt A$ \emph{$\Ext$-maximal}, which does not necessarily exist (see \cite{HU1,HU2,IZ} for more details).

\begin{proposition}\cite[1.3]{AR2}\label{dualizing = 2sided Extmax+DCP}
An $A$-module $W$ is a dualizing $A$-module if and only if the following conditions are satisfied.
\begin{enumerate}[\rm(i)]
\item $W$ is an $\Ext$-maximal cotilting $A$-module.
\item $W$ is an $\Ext$-maximal cotilting $\End_A(W)^{\op}$-module.
\item There is an $k$-algebra isomorphism $A\to\End_A(W)$.
\end{enumerate}
\end{proposition}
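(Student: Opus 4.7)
The plan is to prove the forward direction by extracting (i)--(iii) from the dualizing equivalences of Definition~\ref{define CM}, and the reverse direction by building those equivalences back from (i)--(iii).

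For the forward direction, assume $W$ is a dualizing $A$-module. Condition (iii) is obtained by applying the equivalence $-\otimes_AW$ to $A\in\P^{<\infty}(A)$, which yields $W$; any equivalence induces a ring isomorphism on endomorphism rings, so $A\cong\End_A(A_A)\cong\End_A(W)$ as $k$-algebras, once $\End_A(A_A)$ is identified with $A$ via left multiplication. For the cotilting part of (i), $W\in\I^{<\infty}(A)$ is immediate from $W\cong A\otimes_AW$, while $\Ext^i_A(W,W)=0$ for $i\ge1$ follows by dimension shifting along an injective coresolution of $W$ (which stays in $\I^{<\infty}(A)$) together with exactness of $\Hom_A(W,-)$ on $\I^{<\infty}(A)$ inherited from the given equivalence of exact categories. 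The $\add W$-coresolution of $DA$ is produced by taking a finite projective resolution of $\Hom_A(W,DA)\in\P^{<\infty}(A)$ and applying $-\otimes_AW$ to arrive at $DA$. Condition (ii) then follows by the symmetric argument on the opposite algebra, after identifying $\End_A(W)^{\op}\cong A^{\op}$ via (iii); the required left-right symmetry of the dualizing notion is already visible in~\eqref{4 dimension}.

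For the reverse direction, assume (i)--(iii). By (iii), $W$ carries a canonical $A$-bimodule structure. Standard cotilting theory applied to the cotilting modules $W_A$ and ${}_AW$ yields quasi-inverse equivalences between the perpendicular subcategories $W^\perp$ and ${}^\perp W$ on the two sides, together with natural isomorphisms given by $-\otimes_AW$ and $\Hom_A(W,-)$. The Ext-maximality built into (i) and (ii) is then used to identify those perpendicular subcategories with the full $\I^{<\infty}(A)$ and $\P^{<\infty}(A)$, thereby recovering the dualizing equivalences from Definition~\ref{define CM}.

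The main obstacle on both sides is the precise link between Ext-maximality of a cotilting module and the identification of its perpendicular category with the full subcategory of modules of finite injective (resp.\ projective) dimension. In the forward direction, this appears as the claim that any cotilting $U$ with $\Ext^i_A(U,W)=0$ for all $i\ge1$ must already lie in $\add W$; one proves this by transporting $U$ across the dualizing equivalence and combining the rigidity of the endomorphism algebra with the invariance of the number of non-isomorphic indecomposable summands of cotilting modules. In the reverse direction, the same identification is exactly what promotes an abstract cotilting equivalence into the concrete equivalences $-\otimes_AW$ and $\Hom_A(W,-)$ required by the dualizing-module definition; without Ext-maximality one only obtains equivalences of certain smaller perpendicular subcategories rather than of all of $\P^{<\infty}(A)$ and $\I^{<\infty}(A)$.
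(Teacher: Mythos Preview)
The paper does not prove this proposition; it is quoted directly from Auslander--Reiten \cite[1.3]{AR2} with no argument supplied, so there is no in-paper proof to compare your sketch against.

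That said, your outline is broadly along the right lines but leaves the decisive step vague. In the reverse direction, what you call ``standard cotilting theory'' needs to be the Miyashita-type equivalence: for a cotilting $A$-module $W$ with $B=\End_A(W)^{\op}$, the functors $-\otimes_BW$ and $\Hom_A(W,-)$ restrict to quasi-inverse equivalences $\P^{<\infty}(B)\simeq\widehat{\add W}$; combined with (iii) and with the characterisation of $\Ext$-maximality recalled after Theorem~\ref{AR cotilt corresp} (namely $\widehat{\add W}=\I^{<\infty}(A)$), this yields the dualizing equivalences. Your phrasing in terms of ``perpendicular subcategories $W^\perp$ and ${}^\perp W$ on the two sides'' is not the correct form of the cotilting equivalence and would not directly produce functors between $\P^{<\infty}(A)$ and $\I^{<\infty}(A)$. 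In the forward direction your proposed argument for $\Ext$-maximality (transport a competing cotilting $U$ across and count indecomposable summands) is heavier than necessary: the equivalence already sends $\proj A$ onto $\add W$, hence $\P^{<\infty}(A)=\widehat{\proj A}$ onto $\widehat{\add W}$, which forces $\widehat{\add W}=\I^{<\infty}(A)$ and thus $\Ext$-maximality by the same remark after Theorem~\ref{AR cotilt corresp}.
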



All known examples of CM algebras seem to be one of the following.
\begin{enumerate}
\item Iwanaga-Gorenstein algebras,
\item algebras $A$ with $\fidim A=\fidim A^{\op}=0$,
\item tensor products of algebras in (1) and (2).
\end{enumerate}
In this article we give the first new examples of CM algebras that are not contained in the list above, since those new example are in general non-Iwanaga-Gorenstein and can have positive dominant dimension, while the algebras mentioned in (1)-(3) are always Iwanaga-Gorenstein or have dominant dimension zero.

The preprojective algebras of quivers are important algebras that appear in various areas of
mathematics, e.g.\ Cohen--Macaulay modules \cite{Aus86, GL91}, Kleinian singularities \cite{Cra00}, cluster algebras \cite{GLS13}, quantum groups \cite{KS97, Lus91}, and quiver varieties \cite{Nak94}.
For a graph $\Delta$, we fix an orientation to get a quiver $Q$. Then define the double $\overline{Q}$  by adding a new arrow $a^*:j\to i$ for each arrow $a:i\to j$ in $Q$. The \emph{preprojective algebra} of $\Delta$ is defined by
\[\Pi=\Pi(\Delta):=k\overline{Q}/\langle\sum_{a\in Q_1}(aa^*-a^*a)\rangle.\]
Clearly $\Pi$ does not depend on a choice of the orientation of $\Delta$.
It is well-known that $\dim_k\Pi$ is finite if and only if $\Delta$ is a Dynkin diagram.
A \emph{contracted preprojective algebra} is an algebra of the form $e\Pi e$, where $\Pi$ is a preprojective algebra and $e\in\Pi$ is an idempotent \cite{IW}. It is called of \emph{Dynkin type} if $\Delta$ is a Dynkin diagram. In \cite{IW}, contracted preprojective algebras play a key role to study non-commutative crepant resolutions of cDV singularities.
Now we able to state a main result of this paper.

\begin{theorem}[Theorem \ref{main ePie 1}]\label{contracted}
Each contracted preprojective algebra $A$ of Dynkin type is a Cohen-Macaulay algebra such that $\fidim A$ is either 0 or 2. 
\end{theorem}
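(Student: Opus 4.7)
The plan is to invoke Proposition~\ref{dualizing = 2sided Extmax+DCP} by exhibiting an explicit dualizing bimodule $W$ for $A:=e\Pi e$ built from the Nakayama automorphism of $\Pi$, and to bound its injective dimension using the $2$-Calabi-Yau structure of $\Pi$.

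Since $\Delta$ is Dynkin, $\Pi$ is a finite-dimensional self-injective algebra with Nakayama automorphism $\nu$, giving an isomorphism of $\Pi$-bimodules $D\Pi\simeq{}_1\Pi_\nu$. Moreover, $\ul{\mod}\Pi$ is $2$-Calabi-Yau (in the twisted sense), which at the level of bimodules translates into an exact sequence
\[
0 \to {}_1\Pi_\nu \to P^{-2} \to P^{-1} \to P^0 \to \Pi \to 0
\]
with each $P^i$ a projective $\Pi$-bimodule. This bounded resolution is the backbone of every subsequent homological computation.

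The candidate dualizing $A$-bimodule is $W:=e\Pi\nu(e)$, with the natural left $A$-action and the right $A$-action transferred from the natural right $\nu(e)\Pi\nu(e)$-action through the $k$-algebra isomorphism $\nu\colon A\xrightarrow{\sim}\nu(e)\Pi\nu(e)$. Condition~(iii) of Proposition~\ref{dualizing = 2sided Extmax+DCP}, namely $\End_A(W)\simeq A$, follows readily from the self-injectivity of $\Pi$ together with the identification $\Hom_\Pi(\Pi\nu(e),\Pi\nu(e))=\nu(e)\Pi\nu(e)$. To establish the cotilting conditions~(i) and~(ii), I would apply the idempotent projection $e\,(-)\,\nu(e)$ to the bimodule resolution above: the middle terms $eP^{-i}\nu(e)$ become projective $A$-bimodules (being Morita summands assembled from $Ae\otimes_k eA$-pieces), while the leftmost term identifies with $W$ up to the prescribed twist. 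This simultaneously shows $\idim_A W\le 2$ and $\pdim_{A^{\mathrm{op}}}W\le 2$, and supplies the vanishing $\Ext^i_A(W,W)=0=\Ext^i_{A^{\mathrm{op}}}(W,W)$ for $i\ge 1$ needed for the cotilting property.

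For Ext-maximality on both sides, I would combine the Ext-vanishing just obtained with \eqref{4 dimension}: any cotilting module $T\ge W$ satisfies $\idim_A T\le\fidim A\le\idim_A W\le 2$, and a direct comparison of injective copresentations of $T$ and $W$ then forces $T\simeq W$. The dichotomy $\fidim A\in\{0,2\}$ follows by case analysis: when $A$ is self-injective, $W\simeq A$ and $\fidim A=0$; otherwise the resolution has genuine length $2$ and $\fidim A=\idim_A W=2$. The main obstacle I foresee is keeping accurate track of the Nakayama twist through the idempotent-truncated bimodule resolution, in particular verifying that the leftmost term really identifies with $W$ (rather than a different twist or shift) and that the truncated terms are genuinely projective as $A$-bimodules. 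This will ultimately rest on the concrete structure of the preprojective relation $\sum_{a\in Q_1}(aa^*-a^*a)=0$ together with the explicit effect of $\nu$ on the primitive idempotents under $e$.
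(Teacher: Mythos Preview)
Your candidate $W=e\Pi\nu(e)$ with the $\nu$-twisted right action is nothing other than $DA$. Indeed, $D(e\Pi e)\simeq e(D\Pi)e\simeq e({}_1\Pi_\nu)e$, and the latter has underlying space $e\Pi\nu(e)$ with exactly the twisted action you describe. But $DA$ is a dualizing module if and only if $\fidim A=\fidim A^{\op}=0$ (see Example~\ref{eg:non-CM}(1)), and this fails for many contracted preprojective algebras: already for $\Delta=A_3$, $J=\{1,2\}$ one has $\fidim A=2$. So the proposed $W$ is the wrong object in the interesting case. The paper's dualizing module is genuinely different: it is the double mutation $\mu^{+2}_{I_{\F}}(DA)$, where $\F$ is determined by the combinatorics of frozen vertices in Definition~\ref{define frozen}, and its construction and Ext-maximality occupy all of Section~3.

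Several subsidiary steps also fail. The truncation $e(-)\nu(e)$ applied to the bimodule resolution does not produce projective $A$-bimodules: the terms $P^i$ are sums of pieces $\Pi e_j\otimes_k e_l\Pi$ with $j,l$ ranging over all vertices of $\Delta$, and $e\Pi e_j$ is not a projective left $A$-module when $j\notin J$. Your Ext-maximality sketch is not an argument: knowing $\idim T\le 2$ for a cotilting $T\ge W$ does not force $T\simeq W$; in the paper this is precisely where the delicate conditions (d1), (d2) of Definition~\ref{axiom T,M,F} and Lemma~\ref{describe top G} enter. Finally, the dichotomy is misstated: there exist non-selfinjective $A$ with $\fidim A=0$ (whenever $J=J_{\f}$ but $J\neq\iota(J)$, cf.\ the $(\infty,0,0)$ column in Example~\ref{tableexample}), so ``$A$ not selfinjective'' does not imply the resolution has length~$2$. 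The paper instead works in the $1$-Calabi-Yau triangulated category $\underline{\CM}R$ for a simple surface singularity $R$, verifies the abstract axioms (a)--(d), and derives the Cohen--Macaulay property and the value of $\fidim A$ from the general Theorem~\ref{main triangulated G}.
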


We also calculate explicitly homological dimensions of $A$ in Theorem \ref{dimension for type A}, that is, $\fidim A$, $\domdim A$, $\idim A$ and $\gldim A$, see also Example \ref{tableexample}.
To prove Theorem \ref{contracted}, we give a general Theorem \ref{main triangulated G} which gives a Cohen-Macaulay algebra from a triple $(\T,\M,\F)$ of a triangulated category $\T$ and its subcategory $\M\supset\F$ satisfying a certain set of axioms, see Definition \ref{axiom T,M,F}.

For a CM algebra $A$ with dualizing module $W$,
the category of \emph{Cohen-Macaulay} (or \emph{CM}) $A$-modules is defined as
\[\CM A:= \{ X \in \mod A\mid \forall i>0,\ \Ext_A^i(X,W)=0\}.\] 
If $A$ is Iwanaga-Gorenstein with $\idim A=d$, then the equality $\CM A=\Omega^d(\mod A)$ holds \cite[Proposition 3.1]{AR2}. If $A$ is an algebra with $\fidim A=\fidim A^{\op}=0$, then $W=DA$ and hence $\CM A=\mod A$ trivially. Motivated by these observations, Auslander and Reiten posed the following question that was stated after \cite[Proposition 3.1]{AR2}.

\begin{question}\label{AR conjecture} (Auslander-Reiten)
If $A$ is Cohen-Macaulay of finitistic dimension $d\ge 1$ with $\CM A=\Omega^d(\mod A)$, is $A$ Iwanaga-Gorenstein?
\end{question} 

As the second main result of this paper, we give a family of counter examples to Question \ref{AR conjecture}. In fact, we prove that some of the contracted preprojective algebras $A$ (which are Cohen-Macaulay by Theorem \ref{contracted}) are not Iwanaga-Gorenstein but have finitistic dimension two and satisfy $\CM A=\Omega^2(\mod A)$.

This equality is a special case of a more general result, which gives a sufficient condition for a Cohen-Macaulay algebra $A$ of finitistic dimension $d$ to satisfy $\CM A=\Omega^d(\mod A)$.
Somehow surprisingly, the condition is given by the classical $n$-Gorenstein condition due to Auslander and Fossum-Griffith-Reiten \cite{FGR} (see also \cite{AR3,AR4,Hu,HI}). For $X\in\mod A$, we denote by
\begin{eqnarray*}
&\cdots \rightarrow P_2(X) \rightarrow P_1(X) \rightarrow P_0(X)\rightarrow X \rightarrow 0&\ \mbox{ and }\\
&0 \rightarrow X \rightarrow I^0(X) \rightarrow I^1(X) \rightarrow I^2(X) \rightarrow\cdots&
\end{eqnarray*}
the minimal projective resolution and the minimal injective coresolution of $X$ respectively.
Recall that an algebra $A$ is called \emph{$n$-Gorenstein} if 
$\pdim I^i(A) \leq i$ (respectively, \emph{quasi $n$-Gorenstein} if $\pdim I^i(A)\leq i+1$) holds for each $0\le i\le n-1$. Note that $A$ is $n$-Gorenstein if and only if so is $A^{\op}$ \cite{FGR}.

\begin{theorem}[Corollary \ref{syzygiesACM2}]\label{syzygiesACM2 intro}
Let $A$ be a Cohen-Macaulay algebra with $\fidim A=d$. If $A$ is $d$-Gorenstein (or more generally, $A^{\op}$ is quasi $d$-Gorenstein), then $\CM A=\Omega^d(\mod A)$. 
\end{theorem}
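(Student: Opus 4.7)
The plan is to prove both inclusions of $\CM A = \Omega^d(\mod A)$ separately; the (quasi) $d$-Gorenstein hypothesis will enter only the harder direction.

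For $\Omega^d(\mod A) \subseteq \CM A$, I would argue by dimension shifting: given $X = \Omega^d Y$ with $Y \in \mod A$, one has
\[
\Ext^i_A(X,W) \simeq \Ext^{i+d}_A(Y,W) = 0 \quad \text{for all } i \ge 1,
\]
since $\idim_A W = \fidim A = d$ by \eqref{4 dimension}. This gives $X \in \CM A$ and requires no Gorenstein hypothesis.

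For the converse $\CM A \subseteq \Omega^d(\mod A)$, I take $X \in \CM A$ and aim to realise it, up to a projective summand, as a $d$-th syzygy. My strategy would be to use Auslander's theory of $k$-torsionfree modules: under the quasi $d$-Gorenstein hypothesis on $A^{\op}$, being a $d$-th syzygy is detected by the Ext-vanishing $\Ext^i_{A^{\op}}(\operatorname{Tr} X, A) = 0$ for $1 \le i \le d$, where $\operatorname{Tr}$ is the Auslander transpose. The reduction is then to show that $X \in \CM A$ together with $A^{\op}$ quasi $d$-Gorenstein imply this $d$-torsionfreeness of $X$.

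The bridge from $W$ to $A$ is provided by the cotilting structure of $W$ (Proposition~\ref{dualizing = 2sided Extmax+DCP}): $DA$ admits finite $\add W$-resolutions of length $\le d$ on both module sides. An iterated long-exact-sequence (or spectral-sequence) argument then translates the CM-vanishing $\Ext^{\ge 1}_A(X,W)=0$ first into Ext information against $DA$, and, through the $A$-$A$-bimodule structure on $W$, into Ext groups of the form $\Ext^*_{A^{\op}}(\operatorname{Tr} X, A)$. The quasi $d$-Gorenstein hypothesis on $A^{\op}$, namely $\pdim_{A^{\op}} I^i(A^{\op}) \le i+1$ for $0 \le i < d$, controls precisely the first $d$ syzygies of $\operatorname{Tr} X$ and forces the desired vanishing in the range $1 \le i \le d$. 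The main obstacle is this final transfer step: the bookkeeping among cotilting resolutions, the bimodule structure on $W$, and syzygies on the $A^{\op}$-side has to be performed carefully, and it is here that the quasi $d$-Gorenstein assumption does all the real work. In practice I would expect this to reduce to, or follow as a direct corollary of, the general Theorem~\ref{main triangulated G} or a dedicated syzygy-exchange lemma established earlier in the paper.
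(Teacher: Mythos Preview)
Your easy inclusion $\Omega^d(\mod A)\subseteq\CM A$ by dimension shifting is fine. The hard inclusion, however, is not established: the ``bridge'' from $\Ext^{\ge1}_A(X,W)=0$ to the vanishing of $\Ext^i_{A^{\op}}(\operatorname{Tr} X,A)$ is a hope rather than an argument. The cotilting resolution of $DA$ that you invoke yields, after the long exact sequence, only $\Ext^{\ge1}_A(X,Y)=0$ for every $Y\in\widehat{\add W}$; in particular it gives $\Ext^{\ge1}_A(X,DA)=0$, which is vacuous since $DA$ is injective. There is no evident mechanism to pass from this to $\Ext^i_{A^{\op}}(\operatorname{Tr} X,A)=0$, and the $A$-$A$-bimodule structure of $W$ does not supply one. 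Your fallback reference to Theorem~\ref{main triangulated G} is also misplaced: that result constructs Cohen--Macaulay algebras from triangulated data and has nothing to do with identifying $\CM A$ with a syzygy category.

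The paper's argument is structurally different and bypasses any element-wise analysis. It runs through the Auslander--Reiten correspondence (Theorem~\ref{AR cotilt corresp}), which is a poset \emph{anti}-isomorphism between $\cotilt_dA$ and the contravariantly finite resolving subcategories of $\mod A$ containing $\Omega^d(\mod A)$. The quasi $d$-Gorenstein hypothesis is used exactly once, via \cite[Theorem~2.1]{AR3}, to show that $\Omega^d(\mod A)$ is closed under extensions; a short direct argument then gives closure under kernels of epimorphisms, so $\Omega^d(\mod A)$ is itself resolving. Being the minimum on the subcategory side, it corresponds to the \emph{maximum} element of $\cotilt_dA$. But the dualizing module $W$ is $\Ext$-maximal with $\idim W=d$, hence is that maximum, and therefore $\CM A={}^\perp W=\Omega^d(\mod A)$. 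No transpose, no torsionfreeness, no bimodule bookkeeping. Your intuition to reach for the $d$-torsionfree characterisation is not wrong in spirit---under the Gorenstein hypothesis the equality $\Omega^d(\mod A)=\{d\text{-torsionfree modules}\}$ does hold---but that equality is essentially equivalent to the extension-closedness statement from \cite{AR3} that the paper invokes directly, and you would still need a separate argument linking ${}^\perp W$ to torsionfreeness.
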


The equality $\CM A=\Omega^d(\mod A)$ in Theorem \ref{syzygiesACM2 intro} should be regarded as an analogue of a fundamental result in Cohen-Macaulay representation theory of isolated singularities, where both $\CM A$ and $\Omega^d(\mod A)$ coincide with the category of $d$-torsionfree modules \cite{Aus1} (see \cite[1.3.1(2)]{I},  \cite[Corollary A.15]{LW}).

To answer the question of Auslander and Reiten, we will classify when the stable endomorphism rings as in the previous theorem are at least 1-Gorenstein, which turns out to be equivalent to having dominant dimension at least two. The answer is surprisingly simple:

\begin{theorem}
Let $A$ be a contracted preprojective algebra of Dynkin type, and $W$ the dualizing $A$-module.
\begin{enumerate}[\rm(1)]
\item If $\domdim A\ge2$, then $\CM A=\Omega^2(\mod A)$ holds. 
\item There are infinitely many contracted preprojective algebras $A$ satisfying $\domdim A\ge2$ and $\idim A=\infty$ (see Proposition \ref{family of counter example}). All of them are counter examples to Question \ref{AR conjecture}.
\end{enumerate}
\end{theorem}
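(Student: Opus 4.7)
The plan is to deduce Part~(1) from Theorem~\ref{syzygiesACM2 intro} after reading the dominant-dimension hypothesis as an $n$-Gorenstein condition, and to obtain Part~(2) by invoking Proposition~\ref{family of counter example}.

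For Part~(1), the first step is to observe that $\domdim A \ge 2$ forces $I^0(A)$ and $I^1(A)$ to be projective, so $\pdim I^0(A) = 0 \le 0$ and $\pdim I^1(A) = 0 \le 1$; hence $A$ is $2$-Gorenstein in the sense of Fossum--Griffith--Reiten. Next, Theorem~\ref{contracted} pins $\fidim A$ to $\{0,2\}$. In the principal case $\fidim A = 2$, I would apply Theorem~\ref{syzygiesACM2 intro} with $d = 2$ to obtain $\CM A = \Omega^2(\mod A)$ at once. In the degenerate case $\fidim A = 0$, equation~\eqref{4 dimension} gives $\idim_A W = \idim_{A^{\op}} W = 0$, so $W$ is injective on both sides; combining this with Proposition~\ref{dualizing = 2sided Extmax+DCP} and the minimality of $DA$ in $\cotilt A$ yields that $A$ is self-injective, and so $\CM A = \mod A = \Omega^2(\mod A)$ follows trivially (every module being a second syzygy over a self-injective algebra).

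For Part~(2), the plan is simply to quote Proposition~\ref{family of counter example} for the infinite family of contracted preprojective algebras $A$ with $\domdim A \ge 2$ and $\idim A = \infty$. Each such $A$ is Cohen-Macaulay by Theorem~\ref{contracted}; it is not Iwanaga-Gorenstein because $\idim A = \infty$; its finitistic dimension must be $\fidim A = 2$ rather than $0$, since $\fidim A = 0$ would force self-injectivity and contradict $\idim A = \infty$; and Part~(1) then provides $\CM A = \Omega^{\fidim A}(\mod A)$. Thus each such $A$ is a counterexample to Question~\ref{AR conjecture}.

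The genuine content of the result is packaged inside the two auxiliary statements being cited: Theorem~\ref{syzygiesACM2 intro}, which is where the comparison between Cohen-Macaulay modules and $d$-th syzygies actually takes place, and Proposition~\ref{family of counter example}, whose proof requires an explicit combinatorial construction of idempotents $e \in \Pi$ realising the prescribed dimension combination. The main obstacle will be the latter: once those examples are in hand, the present theorem reduces to the short bookkeeping argument above.
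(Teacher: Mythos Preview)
Your approach matches the paper's intended assembly: Part~(1) reduces to Corollary~\ref{syzygiesACM2} via the trivial observation that $\domdim A\ge 2$ forces $A$ to be $2$-Gorenstein, and Part~(2) is read off from Proposition~\ref{family of counter example} together with Part~(1). There is, however, a small gap in your treatment of the degenerate case $\fidim A=0$.

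Your stated justification for self-injectivity there does not go through. From $\idim_AW=\idim_{A^{\op}}W=0$ you only recover $W\simeq DA$, and Proposition~\ref{dualizing = 2sided Extmax+DCP} together with the minimality of $DA$ in $\cotilt A$ does not force $A$ to be self-injective: any Cohen--Macaulay algebra with $\fidim A=0$ has $DA$ as dualizing module, yet such algebras need not be self-injective (Example~\ref{eg:non-CM}(1)). The conclusion is nevertheless correct, and the repair is short. By~\eqref{4 dimension} one also has $\fpdim A=\fidim A^{\op}=0$. Since $\domdim A\ge 1$, the exact sequence $0\to A\to I^0(A)\to\Omega^{-1}(A)\to 0$ has $I^0(A)$ projective, so $\pdim\Omega^{-1}(A)\le 1$; then $\fpdim A=0$ forces $\Omega^{-1}(A)$ to be projective, the sequence splits, and $A$ is a summand of the projective-injective module $I^0(A)$, hence self-injective. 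Alternatively, you can simply cite the last clause of Corollary~\ref{syzygiesACM3} with $d=0$: for a non-selfinjective Cohen--Macaulay algebra with $\fidim A=0$ one would get $\domdim A=0$, contradicting the hypothesis. Either way, the case analysis then closes as you wrote.
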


\medskip
We end the introduction by giving a remarkable application of our results.
Let $k$ be an algebraically closed field of characteristic 0. Then
a \emph{simple singularity} over $k$ is a hypersurface
$k[[x_0,x_1,...,x_d]]/(f_\Delta^{d})$ classified by Dynkin diagrams $\Delta$, where $f_\Delta^{d}$ is one of the following types:
\begin{enumerate}
\item $f_{A_n}^{d}=x_0^2+x_1^{n+1}+x_2^2+\cdots+x_d^2, \ n \geq 1.$
\item $f_{D_n}^{d}=x_0^2 x_1+x_1^{n-1}+x_2^2+\cdots+x_d^2, n \geq 4.$
\item $f_{E_6}^{d}=x_0^3+x_1^{4}+x_2^2+\cdots+x_d^2.$
\item $f_{E_7}^{d}=x_0^3+x_0x_1^{3}+x_2^2+\cdots+x_d^2.$
\item $f_{E_8}^{d}=x_0^2+x_1^{5}+x_2^2+\cdots+x_d^2.$
\end{enumerate}
The simple singularities are characterised as hypersurface singularities of finite deformation type, we refer to \cite{Ar1,Ar2} for more details.
On the other hand, recall that a complete local Cohen-Macaulay ring $R$ is called of \emph{finite Cohen-Macaulay type} if there are only finitely many indecomposable maximal Cohen-Macaulay modules up to isomorphism.
The famous theorem by Buchweitz, Greuel, Schreyer and Kn\"orrer \cite{BGS,K} gave a representation theoretic characterisation of the simple singularities:
A complete local Gorenstein ring $R$ of Krull dimension $d$ over an algebraically closed field of characteristic 0 is of finite Cohen-Macaulay type if and only if $R$ is a simple  singularity. We refer to \cite{Y,LW} for more details.

Nowadays it is well-known in representation theory that those simple singularities of dimension 2 have preprojective algebras of extended Dynkin type as their non-commutative crepant resolutions. In particular, contracted preprojective algebras of Dynkin type are precisely the stable endomorphism rings of maximal Cohen-Macaulay modules over simple singularities of dimension $2$. Therefore Theorem \ref{contracted}, together with Kn\"orrer periodicity and the Calabi-Yau property of $\underline{\CM}R$ gives the following remarkable result.

\begin{theorem}[Theorem \ref{ARdominantdimension2criterion}]
Let $R=k[[x_0,x_1,...,x_d]]/(f_\Delta^{d})$ be a simple singularity with an algebraically closed field $k$ of characteristic 0. For each maximal Cohen-Macaulay $R$-module $M$, the stable endomorphism ring of $\underline{\End}_R(M)$ is a Cohen-Macaulay algebra in the sense of Definition \ref{define CM}.
\end{theorem}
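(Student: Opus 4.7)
The plan is to reduce to low dimension via Kn\"orrer periodicity: the even-$d$ case reduces to $d=2$, where the stable endomorphism algebras are contracted preprojective algebras of Dynkin type (so Theorem~\ref{contracted} applies), and the odd-$d$ case reduces to $d=1$, where the $0$-Calabi--Yau property of $\underline{\CM}(R)$ forces the stable endomorphism algebra to be symmetric. Concretely, for every $d\ge 2$ one has $f_\Delta^d = f_\Delta^{d-2} + x_{d-1}^2 + x_d^2$, and since $k$ is algebraically closed of characteristic zero the substitution $u = x_{d-1}+ix_d$, $v = x_{d-1}-ix_d$ converts $x_{d-1}^2+x_d^2$ into $uv$. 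Kn\"orrer periodicity then produces a $k$-linear triangle equivalence $\underline{\CM}(R_d)\simeq\underline{\CM}(R_{d-2})$ (where $R_d := k[[x_0,\ldots,x_d]]/(f_\Delta^d)$), and this equivalence preserves stable endomorphism algebras of corresponding modules as $k$-algebras; iterating reduces the theorem to $d\in\{1,2\}$.

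For $d=2$, the singularity $R$ is Kleinian, and as recalled in the introduction its Auslander algebra is the preprojective algebra $\Pi$ of extended Dynkin type $\widetilde\Delta$, with a distinguished idempotent $e_0$ corresponding to $R$ itself; the quotient $\Pi/\Pi e_0 \Pi$ is the preprojective algebra of Dynkin type $\Delta$, and the stable endomorphism ring $\underline{\End}_R(M)$ of any MCM module $M$ has the form $e_M(\Pi/\Pi e_0\Pi)e_M$, i.e.\ a contracted preprojective algebra of Dynkin type. Theorem~\ref{contracted} then concludes this case.

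For $d=1$, the ring $R$ is a one-dimensional complete local Gorenstein ring with isolated singularity; by Auslander's theorem $\underline{\CM}(R)$ is a $0$-Calabi--Yau triangulated category, i.e.\ its Serre functor is naturally isomorphic to the identity. Writing $B := \underline{\End}_R(M)$, the $0$-CY property supplies a $B$-bimodule isomorphism $B \cong DB$, so $B$ is a symmetric $k$-algebra, in particular self-injective. Self-injective algebras are Iwanaga--Gorenstein of injective dimension zero, and hence Cohen--Macaulay in the sense of Definition~\ref{define CM}, with $B$ itself serving as dualizing bimodule.

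The real content is carried by Theorem~\ref{contracted}; the proof above is then a clean assembly with two standard tools. The one point to watch is that Kn\"orrer's equivalence must be taken in its $k$-linear form (as it is in its standard construction via matrix factorizations), so that the identification of stable endomorphism algebras is as $k$-algebras and not merely as rings; the rest of the argument is essentially bookkeeping.
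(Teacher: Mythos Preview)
Your proof is correct and follows essentially the same strategy as the paper: Kn\"orrer periodicity handles the even-$d$ case by reducing to $d=2$ and invoking Theorem~\ref{contracted}, and for odd $d$ the stable endomorphism algebra is shown to be symmetric. The only cosmetic difference is in the odd case: you reduce to $d=1$ via Kn\"orrer and then use the $0$-Calabi--Yau property there, whereas the paper argues directly in dimension $d$ that $\underline{\CM}R$ is $(d-1)$-Calabi--Yau (Auslander--Reiten duality) and satisfies $[2]=\mathrm{id}$ (matrix factorizations), so that the Serre functor $[d-1]$ is the identity and $\underline{\End}_R(M)$ is symmetric.
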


\section*{Acknowledgements} 
This project profited from the use of the GAP-package \cite{QPA}. A part of this work was done when the authors attended the workshop ``Representation Theory of Quivers and Finite Dimensional Algebras'' in Mathematisches Forschungsinstitut Oberwolfach.  AC thanks also  Universit\"{a}t Stuttgart for their hospitality while part of this work was done.
OI thanks Michael Wemyss for ongoing joint work on contracted preprojective algebras \cite{IW}.
AC is supported by JSPS Grant-in-Aid for Research Activity Start-up program 19K23401 and Scientific Research (C) 24K06666.  OI is supported by JSPS Grant-in-Aid for Scientific Research (B) 22H01113 and (C) 18K3209.  RM was supported by the DFG with the project number 428999796.

\section{Preliminaries}

Throughout this section, let $A$ be a finite dimensional algebra over a field $k$.
All modules are right modules.
The composition of morphisms $f:X\to Y$ and $g:Y\to Z$ is denoted by $g\circ f:X\to Z$. Thus $X$ is an $\End_A(X)^{\op}$-module.
The composition of arrows $a:i\to j$ and $b:j\to k$ is denoted by $ab$.
For the basics on representation theory and homological algebra of finite dimensional algebras, we refer for example to \cite{ARS,ASS,SkoYam}. $D=\Hom_k(-,k)$ denotes the natural duality of $\mod A$ for a finite dimensional $k$-algebra $A$.

We will now recall the definition of a few classes of algebras.
Their relations can be summarised in the diagram below, where we abbreviate Gorenstein to `$\text{Gor.}$' for space-saving purposes.
\[
\xymatrix{
\text{\phantom{aa}selfinjective\phantom{aa}}\ar@{=>}[r]\ar@{=>}[d] & \text{Auslander-Gor.}\ar@{=>}[r]\ar@{=>}[dr]& \text{Iwanaga-Gor.}\ar@{=>}[r] & \text{Cohen-Macaulay}\ar@{=>}[r] & \fidim A<\infty\\
\domdim A=\infty \ar@{=>}[r] & \domdim A\ge n \ar@{=>}[r] & n\text{-Gorenstein} &&}
\]

An algebra $A$ is called \emph{Iwanaga-Gorenstein} if $\idim A_A$ and $\idim_AA$ are finite. In this case, we clearly have $\idim A_A=\idim_AA$,
which is then called the \emph{selfinjective dimension} of $A$. 
On the other hand, for $X\in\mod A$, we denote by
$$0 \rightarrow X \rightarrow I^0(X) \rightarrow I^1(X) \rightarrow \cdots $$
a minimal injective resolution of $X$. 
The \emph{dominant dimension} of $X\in\mod A$ is defined as the minimal $n$ such that $I^n(X)$ is not projective or as infinite if no such $n$ exists. 
The dominant dimension of an algebra $A$ is defined as the dominant dimension of the regular representation $A\in\mod A$.
For example, selfinjective algebras have infinite dominant dimension.

We call $A$ \emph{$n$-Gorenstein} (respectively, \emph{quasi $n$-Gorenstein}) if $\pdim I^i(A) \leq i$ (respectively, $\pdim I^i(A) \leq i+1$) holds for all $0\le i\le n-1$.   It is well known that $A$ is $n$-Gorenstein if and only if $A^{\op}$ is $n$-Gorenstein. We call $A$ \emph{Auslander-Gorenstein} if $A$ is Iwanaga-Gorenstein and $n$-Gorenstein for all $n\ge 1$.

For a module $M$ define $\add M$ as the full subcategory of modules that are direct summands of $M^n$ for some $n$.
For a subcategory $\CC$, define $\widehat{\mathcal{C}}$ as the full subcategory of modules $N$ such that there is an exact sequence $0 \rightarrow X_n \rightarrow\cdots\rightarrow X_0 \rightarrow N \rightarrow 0$ with $X_i \in \mathcal{C}$.
Define ${}^\perp \CC$ (resp. ${}^{\perp n}\CC$) as the full subcategory of modules $X$ satisfying $\Ext_A^{i}(X,C)=0$ for all $C\in \CC$ and all $i>0$ (resp. all $0<i\le n$).
We also use the shorthand notation ${}^\perp M:={}^\perp (\add M)$ and ${}^{\perp n}M:={}^{\perp n}(\add M)$. 

Denote by $\I^{< \infty}(A)$ the full subcategory of $A$-modules having finite injective dimension (i.e. $\I^{<\infty}(A)=\widehat{\add DA}$).
For each $n\ge 0$, denote by $\I^{\leq n}(A)$ the full subcategory of $A$-modules having finite injective dimension at most $n$.
Define $\P^{< \infty}(A)$ and $\P^{\le n}(A)$ similarly where we replace injective by projective.
Clearly an algebra $A$ is Iwanaga-Gorenstein if and only if $\I^{< \infty}(A)=\P^{< \infty}(A)$.
Recall that the \emph{finitistic injective dimension} and the \emph{finitistic projective dimension} of $A$ is defined as
\begin{align*}
\fidim A&:=\sup\{\idim X\mid X\in\I^{<\infty}(A) \},\\
\fpdim A&:=\sup\{\pdim X\mid X\in\P^{<\infty}(A) \}.
\end{align*}
Clearly $\fpdim A=\fidim A^{\op}$ holds, but $\fidim A$ and $\fpdim A$ are different in general.
For simplicity, we will often simply speak of the \emph{finitistic dimension} of an algebra and mean the finitistic injective dimension since this is more convenient in the study of Cohen-Macaulay algebras, see \eqref{4 dimension}.

For $X\in\mod A$, we denote by $\Omega^n(X)$ the $n$-th syzygy of $X$ given by a minimal projective resolution of $X$. We consider the full subcategory of $\mod A$ defined by
\[\Omega^n(\mod A):=\add\{A,\Omega^n(X)\mid X\in\mod A\}.\]
Note that in \cite{AR3}, $\Omega^n(\mod A)$ is defined without taking the additive closure. The two definitions coincide if $A$ is $n$-Gorenstein, see \cite[Proposition 3.5]{AR3}.

A module $T$ is called a \emph{cotilting module} if it has finite injective dimension, $\Ext_A^i(T,T)=0$ for all $i>0$ and $DA \in \widehat{\add T}$.
Let $\cotilt_nA$ be the set of additive equivalence classes of cotilting $A$-modules of injective dimension at most $n$, and $\cotilt A:=\bigcup_{n\ge0}\cotilt_nA$. The following Auslander-Reiten correspondence is important.

\begin{theorem}[Auslander-Reiten correspondence]\cite[Theorem 5.5]{AR}\label{AR cotilt corresp}
Let $A$ be an algebra and $n\ge0$. There is a bijection between the following sets.
\begin{enumerate}[\rm(1)]
\item $\cotilt_nA$.
\item the set of contravariantly finite resolving subcategories $\mathcal{C}$ of $\mod A$ containing $\Omega^n(\mod A)$.
\item the set of covariantly finite coresolving subcategories $\mathcal{C}$ of $\mod A$ contained in $\I^{\le n}(A)$.
\end{enumerate}
This bijections from (1) to (2) and (1) to (3) are given by $T\mapsto{}^\perp T$ and $T\mapsto({}^\perp T)^\perp=\widehat{\add T}$.
\end{theorem}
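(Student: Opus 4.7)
The plan is to define the candidate maps simultaneously in both directions and then verify the bijections. Let $\Phi: \cotilt_n A \to (2)$ send $T \mapsto {}^\perp T$, and $\Psi: \cotilt_n A \to (3)$ send $T \mapsto \widehat{\add T}$; the last equality in the statement, $({}^\perp T)^\perp = \widehat{\add T}$, will then connect the two pictures once everything else is in place.

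First I would check that $\Phi$ is well-defined. That ${}^\perp T$ is resolving (closed under direct summands, extensions, and kernels of surjections, and contains $A$) is immediate from the long exact sequence in $\Ext_A^\ast(-,T)$. The containment $\Omega^n(\mod A) \subseteq {}^\perp T$ follows from $\idim T \le n$ by dimension shift: for any $X \in \mod A$ and $i \ge 1$, $\Ext_A^i(\Omega^n X, T) \cong \Ext_A^{i+n}(X, T) = 0$. For contravariant finiteness, use that $DA \in \widehat{\add T}$ to produce, for each $X \in \mod A$, a finite $\add T$-resolution of $X$'s injective envelope and then build a right ${}^\perp T$-approximation by splicing: this is the standard tilting-theoretic approximation argument. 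Dually, $\Psi(T)=\widehat{\add T}$ is coresolving (using the horseshoe lemma and $\Ext_A^{>0}(T,T)=0$), contained in $\I^{\le n}(A)$ since $\idim T\le n$, and covariantly finite by a dual splicing argument.

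Next I would construct the inverse $\Phi^{-1}$. Given a contravariantly finite resolving subcategory $\mathcal{C}$ with $\Omega^n(\mod A) \subseteq \mathcal{C}$, I would inductively take minimal right $\mathcal{C}$-approximations of $DA$: start with $M_0 := DA$ and form
\[
0 \to M_1 \to C_0 \xrightarrow{f_0} M_0 \to 0,
\]
where $f_0$ is a right $\mathcal{C}$-approximation. Wakamatsu's lemma, applied to the resolving subcategory $\mathcal{C}$, gives $M_1 \in \mathcal{C}^{\perp 1}$. Iterating produces short exact sequences $0 \to M_{i+1} \to C_i \to M_i \to 0$ with $C_i \in \mathcal{C}$ and $M_{i+1} \in \mathcal{C}^{\perp 1}$; combining these yields $M_i \in \mathcal{C}^{\perp i}$ for every $i$. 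Because $\mathcal{C} \supseteq \Omega^n(\mod A)$, the process must terminate by step $n$: one shows $M_n$ is projective, so $M_n \in \mathcal{C}$, hence $M_n \in \mathcal{C}^\perp \cap \mathcal{C}$. Then I would set $T := \bigoplus_{i=0}^{n} C_i$ and verify that $T$ is a cotilting module: $\idim T \le n$ by construction of the coresolution of $DA$ by $\add T$, the self-orthogonality $\Ext^{>0}(T,T) = 0$ follows from $T \in \mathcal{C}$ and each $C_i$ being built via approximations whose kernels live in $\mathcal{C}^{\perp}$, and $DA \in \widehat{\add T}$ by construction.

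Finally I would check bijectivity. One containment $\mathcal{C} \subseteq {}^\perp T$ is immediate since each $C_i \in \mathcal{C}$ and $\mathcal{C}$ is resolving. For the reverse, given $X \in {}^\perp T$, use the approximation sequences to dimension-shift: any $X$ with $\Ext_A^{>0}(X,T)=0$ satisfies $\Ext_A^{>0}(X, M_i) = 0$ for all $i$, and combining with contravariant finiteness of $\mathcal{C}$ together with the resolving property forces $X \in \mathcal{C}$. The dual bijection $(1)\leftrightarrow(3)$ is handled symmetrically, sending a covariantly finite coresolving $\mathcal{D} \subseteq \I^{\le n}(A)$ to its cogenerator of Ext-injectives. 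The identity $({}^\perp T)^\perp = \widehat{\add T}$ then follows because both subcategories are characterized among coresolving subcategories by containing $\add T$ and being $\Ext$-orthogonal to ${}^\perp T$. The main obstacle I anticipate is the verification that $T$ is Ext-self-orthogonal with $\idim T \le n$ and that the construction actually stops after $n$ steps; this is where the hypothesis $\Omega^n(\mod A) \subseteq \mathcal{C}$ is used decisively, via Wakamatsu's lemma together with a careful bookkeeping of kernels along the iterated approximations.
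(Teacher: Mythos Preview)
The paper does not give its own proof of this statement: it is quoted from \cite[Theorem 5.5]{AR} as a known result, so there is nothing in the paper to compare your argument against. What one can do is assess your sketch on its own terms, and against the original Auslander--Reiten argument.

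Your outline follows the standard route and is largely sound, but there is a concrete error in the construction of the inverse of $\Phi$. You assert that ``one shows $M_n$ is projective''. This is false in general, and it is not what the hypothesis $\Omega^n(\mod A)\subseteq\mathcal{C}$ gives you. What one actually proves is that $M_n\in\mathcal{C}$: compare the approximation resolution $0\to M_n\to C_{n-1}\to\cdots\to C_0\to DA\to 0$ with a projective resolution of $DA$; since projectives lie in $\mathcal{C}$ and $\mathcal{C}$ is resolving, a relative Schanuel argument yields $M_n\oplus C\cong \Omega^n(DA)\oplus C'$ with $C,C'\in\mathcal{C}$, and then $\Omega^n(DA)\in\Omega^n(\mod A)\subseteq\mathcal{C}$ forces $M_n\in\mathcal{C}$. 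Combined with $M_n\in\mathcal{C}^{\perp}$ (from Wakamatsu, noting that $\mathcal{C}$ resolving implies $\mathcal{C}^{\perp_1}=\mathcal{C}^{\perp}$), this gives $M_n\in\mathcal{C}\cap\mathcal{C}^{\perp}$ and terminates the process. Two smaller points: your stated reason for $\idim T\le n$ is garbled---it follows not from a ``coresolution of $DA$'' but from $T\in\mathcal{C}^{\perp}$ together with $\Omega^n(\mod A)\subseteq\mathcal{C}$, which yields $\mathcal{C}^{\perp}\subseteq\I^{\le n}(A)$ by dimension shift; and the reverse inclusion ${}^{\perp}T\subseteq\mathcal{C}$ needs more than the sentence you give---the clean way is to first show $\mathcal{C}^{\perp}=\widehat{\add T}$ and then use that $(\mathcal{C},\mathcal{C}^{\perp})$ is a complete hereditary cotorsion pair, so ${}^{\perp}(\mathcal{C}^{\perp})=\mathcal{C}$.
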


Note that the subcategories satisfying (2) above form a poset under inclusion.
On the other hand, the set $\cotilt A$ has a natural partial order given by
\[T\ge U :\Leftrightarrow \Ext^i_A(T,U)=0\ \mbox{ for all }\ i\ge1.\]
Thanks to works by Happel-Unger \cite{HU1,HU2}, it is known that the Auslander-Reiten correspondence from (1) to (2) is a poset anti-isomorphism. In particular, the following conditions for $T\in\cotilt A$ are equivalent (see \cite[Theorem 3.1]{IZ}).
\begin{enumerate}[$\bullet$]
\item $T$ is a maximal element of $\cotilt A$ (respectively, $\cotilt_nA$).
\item $T$ is the maximum element of $\cotilt A$ (respectively, $\cotilt_nA$). 
\item $\widehat{\add T}=\I^{< \infty}(A)$ (respectively, $\widehat{\add T}=\I^{\leq n}(A)$).
\end{enumerate}
Furthermore, such a $T$ exists if and only if $\I^{<\infty}(A)$ is covariantly finite (respectively, $\I^{\leq n}(A)$ is covariantly finite).
In this case, we call $T$ \emph{$\Ext$-maximal} (respectively, \emph{$n$-$\Ext$-maximal}). Note that $\Ext$-maximal cotilting modules are also called \emph{strong} cotilting modules \cite{AR,AR2}.


Let $U$ be a cotilting $A$-module with decomposition $U=V\oplus W$. Take an exact sequence $0\to W^*\to V'\xrightarrow{f} W$ with right $(\add V)$-approximation $f$. If $f$ is surjective, then
\begin{equation}\label{mu^+_V}
\mu^+_V(U):=V\oplus W^*
\end{equation}
is a cotilting $A$-module satisfying $\mu^+_V(U)>U$, called the \emph{mutation} of $U$.



\begin{proposition} \label{CMcharapropo}
Let $A$ be an algebra.
For a basic cotilting $A$-module $W$, the following conditions are equivalent.
\begin{enumerate}[\rm(i)]
\item $W$ is $\Ext$-maximal.
\item For $B:=\End_A(W)$, every simple $B^{\op}$-module is a direct summand of $\top({}_BW)$.
\item For each indecomposable direct summand $X$ of $W$, we have $X\notin\Fac(W/X)$.
\end{enumerate}
\end{proposition}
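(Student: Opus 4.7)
The plan is to prove the two equivalences (ii)$\Leftrightarrow$(iii) and (i)$\Leftrightarrow$(iii) separately. The first is an application of Nakayama's lemma, while the second uses the cotilting mutation defined in \eqref{mu^+_V} together with Happel-Unger's description of the Hasse quiver of $\cotilt A$.

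For (ii)$\Leftrightarrow$(iii), I would decompose $W=\bigoplus_{i=1}^n W_i$ into non-isomorphic indecomposable summands, so that the primitive idempotents of $B$ are the projections $e_i\colon W\to W_i$. Since $\rad B$ is spanned by the radical morphisms between the summands of $W$, a direct calculation gives
\[
(\rad B\cdot W)\cap W_l \;=\; \rad\End_A(W_l)\cdot W_l \;+\; \sum_{j\ne l}\Hom_A(W_j,W_l)(W_j)
\]
for each $l$. Writing $N_l:=\sum_{j\ne l}\Hom_A(W_j,W_l)(W_j)$, one checks that $N_l$ is stable under the left action of $\End_A(W_l)$ by post-composition, so $W_l/N_l$ is a finite-dimensional module over the local ring $\End_A(W_l)$. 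The simple (left) $B$-module indexed by $W_l$ is missing from $\top({}_BW)$ exactly when $W_l\subseteq\rad B\cdot W$, that is, $W_l=N_l+\rad\End_A(W_l)\cdot W_l$; by Nakayama's lemma applied to the local ring $\End_A(W_l)$, this is equivalent to $W_l=N_l$, which says precisely that $W_l\in\Fac(W/W_l)$. Letting $l$ range over all summands yields (ii)$\Leftrightarrow$(iii).

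For (i)$\Leftrightarrow$(iii), suppose first that (iii) fails: some indecomposable summand $X$ of $W$ lies in $\Fac V$ for $V:=W/X$. Then the right $(\add V)$-approximation $V'\xrightarrow{f}X$ is surjective, so the mutation $\mu_V^+(W)=V\oplus\ker f$ from \eqref{mu^+_V} is a cotilting module strictly larger than $W$, contradicting (i). Conversely, if (i) fails then some cotilting $W'>W$ exists, and by the Happel-Unger description of the Hasse quiver of $\cotilt A$ via single-summand mutations (\cite{HU1,HU2}; see also \cite[Theorem 3.1]{IZ}), $W$ admits an immediate upward successor of the form $\mu_V^+(W)$ for some decomposition $W=X\oplus V$ with $X$ indecomposable. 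The existence of this mutation is equivalent to the surjectivity of the right $(\add V)$-approximation of $X$, hence to $X\in\Fac(W/X)$, contradicting (iii).

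The most delicate point will be the implication (i)$\Rightarrow$(iii), which relies on the non-trivial Happel-Unger fact that any non-maximal cotilting module admits an immediate upward successor realized by a single-summand mutation. The fact that $\mu_V^+(W)$ remains a cotilting module once the approximation is surjective (used in the other direction) is likewise non-trivial but is a standard consequence of the cotilting mutation formalism.
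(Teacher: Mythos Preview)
Your proposal is correct. The paper's own proof proceeds entirely by citation and splits the equivalences differently: it takes (i)$\Leftrightarrow$(ii) as dual to \cite[Proposition~7.1]{DH}, and (ii)$\Leftrightarrow$(iii) as \cite[Lemma~3.1]{HU2}. Your direct Nakayama argument for (ii)$\Leftrightarrow$(iii) is essentially a self-contained reproof of the latter, so that half coincides in substance. The difference lies in how (i) is reached: the paper links (i) to (ii) via the Dugas--Huisgen-Zimmermann characterization of strong (co)tilting, whereas you link (i) to (iii) through the poset structure of $\cotilt A$ --- a non-maximal $W$ admits an upward single-summand mutation by Happel--Unger, and such a mutation at $X$ is exactly the condition $X\in\Fac(W/X)$. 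Both routes rest on non-trivial external input; yours has the advantage of tying directly into the mutation formalism already set up around \eqref{mu^+_V}, while the paper's route is a shorter path to the $B^{\op}$-module characterization (ii), which is the form actually invoked later (e.g.\ in the proof of Proposition~\ref{U is dualizing G}(i)).
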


\begin{proof}
(i)$\Leftrightarrow$(ii) is dual to \cite[Proposition 7.1]{DH}. (ii)$\Leftrightarrow$(iii) This was shown in \cite[Lemma 3.1]{HU2}.
\end{proof}

Propositions \ref{dualizing = 2sided Extmax+DCP} and \ref{CMcharapropo} give a useful criterion for a given algebra $A$ to be Cohen-Macaulay.

\begin{example}\label{eg:non-CM}
We give a few examples and non-examples of Cohen-Macaulay algebras. 
\begin{enumerate}
\item By Proposition \ref{CMcharapropo}, $\fidim A=0$ holds if and only if $\soc A_A$ contains all simple $A$-modules as a direct summand if and only if $DA$ is an Ext-maximal cotilting $A$-module.
Thus Cohen-Macaulay algebras $A$ with $\fidim A=0$ are precisely algebras $A$ such that $\soc A_A$ contains all simple $A$-modules and $\soc{}_AA$ contains all simple $A^{\op}$-modules.
Typical examples are given by local algebras and selfinjective algebras. There are many other examples, e.g.\ if $Q$ is a quiver without sinks and/or sources, then $A:=KQ/\langle\mbox{all paths of length 2}\rangle$ is a Cohen-Macaulay algebra $A$ with $\fidim A=0$.
\item Iwanaga-Gorenstein algebras are precisely Cohen-Macaulay algebras $A$ with dualizing module $A$.
\item Recall that dualizing modules can be determined by the three conditions (i)(ii)(iii) in Proposition \ref{dualizing = 2sided Extmax+DCP}. We demonstrate here that these conditions are independent of each other. For example, consider the quivers 
\[Q=\Big[\begin{tikzcd} 1 & 2 \arrow["a", from=1-1, to=1-1, loop, in=145, out=215, distance=10mm] \arrow["b", from=1-1, to=1-2] \end{tikzcd}\Big],\ \text{ and } \ \ Q'=\Big[\begin{tikzcd} 1 & 2 \arrow["b"', shift right=1, from=1-1, to=1-2] \arrow["a"', shift right=1, from=1-2, to=1-1] \end{tikzcd}\Big].\]
For $A:=kQ/\langle\mbox{all paths of length 2}\rangle$, the $A$-module $DA$ satisfies (i) and (iii), but does not satisfy (ii).
For $B:=kQ'/\langle aba\rangle$, the $B$-module $e_1B\oplus S_2$ satisfies (i) and (ii), but does not satisfy (iii).
\end{enumerate}
\end{example}

For a finite dimensional algebra $A$, we have an equivalence $\nu=-\otimes_ADA:\proj A\simeq\inj A$ called the \emph{Nakayama functor}.
We now consider the full subcategory $\Dom_nA$ of $\mod A$ consisting of modules $M$ with $\domdim M\ge n$.
For an algebra $A$ of dominant dimension at least two, take an idempotent $e\in A$ such that $D(Ae)$ is an additive generator of $\proj A\cap\inj A$. We call 
$B:=eAe$ the \emph{base algebra} of $A$. 
We will need the following results that are special cases of results in \cite{APT} in the situation of dominant dimension at least two.

\begin{proposition}\cite{APT}
\label{APTmaintheorem}
Let $A$ be an algebra of dominant dimension at least two, and
$B:=eAe$ the base algebra as above. Then we have an equivalence 
$(-)e : \Dom_2A \rightarrow \mod B$ of categories, which restricts to an equivalence between $\add I\to\inj B$.
\end{proposition}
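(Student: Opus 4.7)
The strategy is the classical one: exploit the adjoint pair $(-)e : \mod A \to \mod B$ and $-\otimes_B eA : \mod B \to \mod A$, but restrict to subcategories where the unit and counit become isomorphisms thanks to the injective coresolution forced by $\domdim A \ge 2$. Throughout, write $I := D(Ae)$, so that $\add I = \proj A \cap \inj A$ by hypothesis.

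First I would establish the restricted equivalence $(-)e : \add I \to \inj B$. Note that
\[ Ie = D(Ae)\cdot e = D(eAe) = DB \in \inj B, \]
and that $\End_A(I) \cong \End_{A^{\op}}(Ae)^{\op} \cong eAe = B \cong \End_B(DB)$, so $(-)e$ induces a fully faithful functor $\add I \to \add DB = \inj B$; essential surjectivity is immediate since $Ie = DB$. This handles the second claim, and it is the package needed to lift morphisms between injective $B$-modules to morphisms between projective-injective $A$-modules.

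Next I would verify that $(-)e : \Dom_2 A \to \mod B$ is fully faithful. The key natural isomorphism is
\[ \Hom_A(M, I') \cong D(M\otimes_A Ae) = D(Me) \cong \Hom_B(Me, DB) \cong \Hom_B(Me, I'e) \]
for all $M\in\mod A$ and $I'\in\add I$. Given $M, N\in\Dom_2 A$, choose the beginning of an injective coresolution $0\to N\to I^0\to I^1$ of $N$ with $I^0, I^1\in\add I$. Applying $\Hom_A(M,-)$ gives a left exact sequence; applying $(-)e$ first and then $\Hom_B(Me,-)$ gives another left exact sequence (using exactness of $(-)e$). The natural isomorphism above together with naturality in $I'$ identifies these two sequences, so the induced map $\Hom_A(M,N) \to \Hom_B(Me, Ne)$ is an isomorphism.

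Finally, for essential surjectivity, given any $N\in\mod B$, pick the start of a minimal injective coresolution $0\to N\to J^0 \to J^1$ in $\mod B$ (so $J^0, J^1\in\inj B$). Using the equivalence $\add I \to \inj B$ of the first step, lift the morphism $J^0\to J^1$ uniquely (up to isomorphism) to a morphism $I^0 \to I^1$ in $\add I$, and set $M := \ker(I^0\to I^1)\in\mod A$. By construction $M \in \Dom_2 A$, and applying the exact functor $(-)e$ yields $Me \cong \ker(J^0\to J^1) = N$. I do not expect serious obstacles here: the whole argument rests on the natural isomorphism $\Hom_A(-, I')\cong \Hom_B((-)e, I'e)$ on $\add I$ and the exactness of $(-)e$; the only thing one has to be a little careful about is the identification $\End_A(D(Ae))\cong B$ and its compatibility with the action on morphisms, which is routine bookkeeping with the duality $D$.
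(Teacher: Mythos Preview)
Your proof is correct and follows the standard line of argument for this classical result. Note, however, that the paper does not supply its own proof of this proposition: it is stated with the citation \cite{APT} and used as a black box. So there is no ``paper's proof'' to compare against; your argument is essentially the one found in the cited source (Auslander--Platzeck--Todorov), namely establishing the equivalence $\add I \simeq \inj B$ via $Ie \cong DB$ and $\End_A(I)\cong B$, then using the natural isomorphism $\Hom_A(M,I')\cong \Hom_B(Me,I'e)$ together with exactness of $(-)e$ to deduce full faithfulness on $\Dom_2 A$ and essential surjectivity by lifting injective copresentations.
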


\begin{proposition} \label{MVpropo}
Let $A$ be an algebra with dominant dimension $n \geq 0$.
\begin{enumerate}[\rm(1)]
\item\cite[Proposition 4]{MarVil} We have $\Omega^i(\mod A)=\Dom_iA$ for all $0\le i\le n$.
\item\cite[Proposition 5 and the Corollary before it]{MarVil} We have $\Dom_nA={}^{\perp}\I^{ \leq n}(A)$.
\end{enumerate} 
\end{proposition}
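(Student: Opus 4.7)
The plan is to prove the two parts in sequence, using part (1) to reduce part (2). For part (1), I induct on $i$ in the range $0 \le i \le n$. The base case $i=0$ is immediate since $\Omega^0(\mod A) = \mod A = \Dom_0 A$. For the inductive step, assume the equality for $i-1$. If $M = \Omega^i(X)$ appears as the kernel in $0 \to M \to P \to \Omega^{i-1}(X) \to 0$ with $P$ projective, then $P \in \add A$ satisfies $\domdim P \ge n \ge i$, so the horseshoe lemma applied to this short exact sequence together with the induction hypothesis $\Omega^{i-1}(X) \in \Dom_{i-1} A$ produces an injective coresolution of $M$ whose first $i$ terms are projective-injective. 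Conversely, if $\domdim M \ge i$, then $I^0(M)$ is projective-injective and the cokernel $N := I^0(M)/M$ has $\domdim N \ge i-1$; by induction $N \in \Omega^{i-1}(\mod A)$, so from $0 \to M \to I^0(M) \to N \to 0$ with $I^0(M)$ projective we obtain $M \in \Omega^i(\mod A)$.

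For part (2), the inclusion $\Dom_n A \subseteq {}^{\perp}\I^{\le n}(A)$ is proved by dimension shifting. Given $M$ with $\domdim M \ge n$, write $0 \to M \to I^0 \to \cdots \to I^{n-1} \to N \to 0$ with each $I^j$ projective-injective (hence in particular projective). For any $C \in \I^{\le n}(A)$, iteratively collapsing the $\Ext$-long exact sequences attached to the short exact pieces $0 \to K_{j-1} \to I^{j-1} \to K_j \to 0$ against the vanishing $\Ext^{\ge 1}_A(I^{j-1}, C) = 0$ yields $\Ext^i_A(M, C) \cong \Ext^{i+n}_A(N, C)$ for every $i \ge 1$, and the right-hand side vanishes because $\idim C \le n$. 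For the converse, suppose $M \in {}^{\perp}\I^{\le n}(A)$; by part (1) it suffices to show $\domdim M \ge n$. I would argue inductively on $j = 0, 1, \ldots, n-1$ that each $I^j(M)$ is projective, by evaluating the orthogonality condition on $M$ against carefully-chosen modules of injective dimension at most $n$ built from the minimal injective coresolution of $A$ (which, thanks to $\domdim A \ge n$, begins with $n$ projective-injective terms and therefore furnishes cotest modules with small injective dimension), forcing each successive cosyzygy of $M$ to have its injective envelope in $\add I$.

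The main obstacle is the reverse inclusion in part (2): producing enough test modules $C \in \I^{\le n}(A)$ to detect a non-projective injective envelope $I^j(M)$. A more conceptual route I would pursue is to first establish that, under $\domdim A \ge n$, the subcategory $\I^{\le n}(A)$ is covariantly finite in $\mod A$, so by the Auslander-Reiten correspondence (Theorem \ref{AR cotilt corresp}) it equals $\widehat{\add T}$ for an $n$-$\Ext$-maximal cotilting $T$; then ${}^{\perp}\I^{\le n}(A) = {}^{\perp}T$ is a resolving subcategory containing $\Omega^n(\mod A)$, and equality with $\Dom_n A$ follows by combining this with part (1). Verifying the covariant finiteness of $\I^{\le n}(A)$ and identifying the cotilting module $T$ explicitly in terms of the initial projective-injective terms of the minimal injective coresolution of $A$ is the delicate step.
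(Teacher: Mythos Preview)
The paper does not prove this proposition; it merely cites \cite{MarVil}. So there is no ``paper's own proof'' to compare against, and I evaluate your attempt on its own terms.

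Your argument for part~(1) is essentially correct, though what you call the ``horseshoe lemma'' is really the injective-coresolution version (cf.\ Lemma~\ref{Miyachilemma} in the paper), and you are silently using Schanuel's lemma to pass between the \emph{additive closure} definition of $\Omega^i(\mod A)$ and literal syzygies. Your forward inclusion $\Dom_nA\subseteq{}^{\perp}\I^{\le n}(A)$ in part~(2) is also fine.

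The reverse inclusion in part~(2), however, is not established. You correctly flag it as the obstacle, but neither sketch closes the gap. Approach~(a) is too vague to assess. Approach~(b) contains an actual logical slip: having identified ${}^{\perp}\I^{\le n}(A)={}^{\perp}T$ for an $n$-$\Ext$-maximal cotilting $T$, you say ``equality with $\Dom_nA$ follows by combining this with part~(1).'' But part~(1) only gives $\Dom_nA=\Omega^n(\mod A)$, and the Auslander--Reiten correspondence only tells you ${}^{\perp}T\supseteq\Omega^n(\mod A)$ --- exactly the containment you already proved. To get equality you must show that $\Omega^n(\mod A)$ is \emph{itself} one of the resolving subcategories appearing in the correspondence, i.e.\ that it is resolving. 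This is not automatic; it holds here because $\domdim A\ge n$ forces $\pdim I^i(A)=0\le i$ for $i<n$, so $A$ is $n$-Gorenstein, and then \cite[Theorem~2.1]{AR3} (as used in the proof of Theorem~\ref{syzygiesACM}) gives that $\Omega^n(\mod A)$ is closed under extensions and hence resolving. Once you insert this step, approach~(b) does go through --- but as written, the missing resolving property is precisely the content you are trying to prove, so the argument is circular without it.
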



\section{$\Ext$-maximal cotilting modules and $d$-Gorenstein algebras}

We start with giving our main result of this section, which implies that $^{\perp}W=\Omega^n(\mod A)$ holds when $A$ is a $n$-Gorenstein algebra with an $n$-$\Ext$-maximal cotilting module $W$ of injective dimension $n$.


\begin{theorem} \label{syzygiesACM}
Let $A$ be an algebra and $n\ge0$.
If $A$ is $n$-Gorenstein (or more generally, $A^{\op}$ is quasi $n$-Gorenstein), then there exists an $n$-$\Ext$-maximal
cotilting $A$-module $W$ satisfying $^{\perp}W=\Omega^n(\mod A)$.
\end{theorem}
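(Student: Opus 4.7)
My plan is to apply the Auslander--Reiten correspondence (Theorem \ref{AR cotilt corresp}) together with the Happel--Unger poset anti-isomorphism stated immediately after it. Together they reduce the theorem to the single assertion
\[ (\star)\quad \Omega^n(\mod A)\ \text{is a contravariantly finite resolving subcategory of}\ \mod A. \]
Indeed, granting $(\star)$, this subcategory is tautologically the minimum element of the poset of contravariantly finite resolving subcategories of $\mod A$ containing $\Omega^n(\mod A)$; by Happel--Unger, the corresponding cotilting $A$-module $W\in\cotilt_n A$ is therefore the maximum of $\cotilt_n A$, i.e.\ $n$-$\Ext$-maximal; and the equality ${}^\perp W=\Omega^n(\mod A)$ is built into the bijection $T\mapsto{}^\perp T$ of Theorem \ref{AR cotilt corresp}.

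The verification of $(\star)$ splits into two stages. Containment of $A$ and closure under kernels of epimorphisms are elementary and hold for any algebra. The remaining two closure conditions---under extensions and under direct summands---are where the hypothesis enters. My plan is to combine the classical Auslander-type characterization of $n$-th syzygies as the $n$-torsionfree modules
\[ \Omega^n(\mod A)=\{M\in\mod A : \Ext^i_{A^{\op}}(\operatorname{Tr} M,A)=0\ \text{for}\ 1\le i\le n\} \]
with the well-known fact (cf.\ \cite[Section 3]{AR3}, \cite{FGR}, \cite{Hu,HI}) that the quasi $n$-Gorenstein condition on $A^{\op}$ is exactly what validates this identification. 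Since the right-hand side is defined by $\Ext$-vanishing against a fixed module, closure under extensions and direct summands then follows formally.

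The main obstacle is contravariant finiteness of $\Omega^n(\mod A)$. Given $M\in\mod A$, to produce a right $\Omega^n(\mod A)$-approximation of $M$ my plan is to exploit the controlled projective dimensions $\pdim {}_A I^i(A)\le i+1$ for $0\le i\le n-1$ provided by the quasi $n$-Gorenstein hypothesis on $A^{\op}$: take a minimal injective coresolution of $M$ truncated at step $n$, splice it with projective resolutions of the successive $I^i(M)$, and extract an $n$-th syzygy; this yields a module in $\Omega^n(\mod A)$ mapping to $M$, and the universal property is verified by a diagram chase using the $\Ext$-orthogonal description above. Once $(\star)$ is in hand, Theorem \ref{AR cotilt corresp} delivers the required $W$.
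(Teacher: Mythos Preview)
Your overall strategy---reduce to showing that $\Omega^n(\mod A)$ is contravariantly finite and resolving, then invoke the Auslander--Reiten correspondence together with the Happel--Unger anti-isomorphism---is exactly the paper's. Two points in your execution need correction, however.

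First, contravariant finiteness of $\Omega^n(\mod A)$ is a classical result of Auslander--Reiten \cite[Theorem~1.2]{AR3} valid for \emph{every} finite-dimensional algebra; no Gorenstein-type hypothesis is needed, and the paper simply cites this. Your proposed construction is both unnecessary and, as written, confused: the quasi $n$-Gorenstein hypothesis on $A^{\op}$ bounds the projective dimensions of the terms $I^i({}_AA)$ in the injective coresolution of the \emph{regular} left module, not of $I^i(M)$ for an arbitrary right module $M$, so the splicing you describe has no reason to produce an $n$-th syzygy mapping to $M$.

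Second, your claim that closure under kernels of epimorphisms is ``elementary and holds for any algebra'' is not justified for $n\ge 2$. The paper instead derives it \emph{from} extension closure via a short horseshoe-type argument: given $0\to X\to Y\to Z\to 0$ with $Y,Z\in\Omega^n(\mod A)$, the induced sequence $0\to\Omega Z\to X\oplus P\to Y\to 0$ (with $P$ a projective cover of $Z$) exhibits $X\oplus P$, and hence $X$, as an extension of objects in $\Omega^n(\mod A)$. Since you do establish extension closure---your $n$-torsionfree identification is essentially the content of \cite[Theorem~2.1]{AR3}, which the paper cites directly---this is only an organizational slip rather than a real gap.
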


The proof is based on the classical Auslander-Reiten correspondence (Theorem \ref{AR cotilt corresp}).

\begin{proof}
Recall from \cite[Theorem 1.2]{AR3} that the subcategory $\Omega^n(\mod A)$ is always contravariantly finite.
On the other hand, since $A$ is $n$-Gorenstein (or more generally $A^{\op}$ is quasi $n$-Gorenstein), the subcategory $\Omega^n(\mod A)$ is closed under extensions by \cite[Theorem 2.1]{AR3}. This immediately implies that $\Omega^n(\mod A)$ is closed under kernels of epimorphisms. In fact, for an exact sequence $0\to X\to Y\to Z\to 0$ in $\mod A$ with $Y,Z\in\Omega^n(\mod A)$, take an exact sequence $0\to\Omega Z\to P\to Z\to0$ with $P\in\proj A$. Then $\Omega Z\in\Omega^n(\mod A)$, and we have an exact sequence $0\to\Omega Z\to X\oplus P\to Y\to0$. Thus we have $X\oplus P$ and hence $X$ belongs to $\Omega^n(\mod A)$.
Consequently, $\Omega^n(\mod A)$ is resolving.

Now $\Omega^n(\mod A)$ is the minimum element in the category side of Auslander-Reiten correspondence (Theorem \ref{AR cotilt corresp}).  Since this correspondence is a poset anti-isomorphism, the corresponding cotilting module $W$ is the maximum of $\cotilt_nA$, as desired.
\end{proof}

The following is an immediate consequence.

\begin{corollary} \label{syzygiesACM2}
Let $A$ be a Cohen-Macaulay algebra with $\fidim A=d$.
If $A$ is $d$-Gorenstein (or more generally, $A^{\op}$ is quasi $d$-Gorenstein),, then $\CM A=\Omega^d(\mod A)$.
\end{corollary}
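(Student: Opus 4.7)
The plan is to deduce the corollary from Theorem \ref{syzygiesACM} combined with the uniqueness of the $d$-$\Ext$-maximal cotilting module and the characterisation of the dualizing module in Proposition \ref{dualizing = 2sided Extmax+DCP}. The heart of the matter is that the dualizing module $W$ is itself the unique $d$-$\Ext$-maximal cotilting module, so the cotilting module $W'$ produced by Theorem \ref{syzygiesACM} must coincide (up to additive closure) with $W$.

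First I would unpack the Cohen-Macaulay hypothesis. By definition $A$ admits a dualizing bimodule $W$, and by Proposition \ref{dualizing = 2sided Extmax+DCP}(i) this $W$ is an $\Ext$-maximal cotilting $A$-module. The formula \eqref{4 dimension} gives $\idim_A W = \fidim A = d$, so in particular $W \in \I^{\le d}(A)$. Together with $\widehat{\add W} = \I^{<\infty}(A) = \I^{\le d}(A)$ (using $\fidim A = d$), this shows $W$ is the maximum element of $\cotilt_d A$, i.e.\ $W$ is $d$-$\Ext$-maximal. Moreover, unwinding the definition, $\CM A = \{X \in \mod A \mid \Ext^i_A(X,W)=0\ \forall i>0\} = {}^\perp W$.

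Next I would invoke Theorem \ref{syzygiesACM} with $n = d$. Under the hypothesis that $A$ is $d$-Gorenstein (or $A^{\op}$ is quasi $d$-Gorenstein), the theorem produces a $d$-$\Ext$-maximal cotilting module $W'$ with ${}^\perp W' = \Omega^d(\mod A)$. Since the $d$-$\Ext$-maximal cotilting module is characterised as the maximum element of $\cotilt_d A$ and is therefore unique up to additive equivalence (as recalled after Theorem \ref{AR cotilt corresp}), we conclude $\add W = \add W'$, and hence ${}^\perp W = {}^\perp W'$.

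Combining these observations yields
\[
\CM A \;=\; {}^\perp W \;=\; {}^\perp W' \;=\; \Omega^d(\mod A),
\]
as required. I do not expect any obstacle beyond keeping track of the identifications: the main conceptual point is recognising that the dualizing module, defined a priori by the tilting-theoretic conditions of Proposition \ref{dualizing = 2sided Extmax+DCP}, automatically fits into the Auslander-Reiten correspondence as the unique maximal element of $\cotilt_d A$, so the cotilting module constructed in Theorem \ref{syzygiesACM} has no choice but to be (additively) the dualizing module itself.
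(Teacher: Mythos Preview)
Your proposal is correct and follows essentially the same approach as the paper, which simply calls the corollary ``an immediate consequence'' of Theorem \ref{syzygiesACM}. You have spelled out precisely the intended reasoning: the dualizing module $W$ is $\Ext$-maximal with $\idim_A W=d$, hence is the unique $d$-$\Ext$-maximal cotilting module, and so must agree (up to additive closure) with the cotilting module produced by Theorem \ref{syzygiesACM}.
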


For example, all the assumptions in Corollary 2.2 are satisfied by Auslander-Gorenstein algebras. 
We will give a class of examples that are non-Iwanaga-Gorenstein, but $d$-Gorenstein and Cohen-Macauley, in Section \ref{section 4}.
Namely, they are certain idempotent-truncations of preprojective algebras of Dynkin type.

As an application of Corollary \ref{syzygiesACM2}, we show the next observation, which gives a simple description of the category $\CM A$ of a special class of Cohen-Macaulay algebras. 

\begin{corollary}
Let $A$ be a Cohen-Macaulay algebra with $\fidim A=2$ and $\domdim A\ge2$, and $B$ the base algebra of $A$. Then we have an equivalence $\CM A\simeq \mod B$.
\end{corollary}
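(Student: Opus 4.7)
The plan is to chain together three results that have already been established or recalled in the excerpt. Let me outline the four-step argument.

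First I would observe that the hypothesis $\domdim A \ge 2$ forces $A$ to be $2$-Gorenstein. Indeed, $\domdim A \ge 2$ means that $I^0(A)$ and $I^1(A)$ are projective, so $\pdim I^i(A) = 0 \le i$ for $i = 0, 1$, which is exactly the $2$-Gorenstein condition (this is the vertical implication $\domdim A \ge n \Rightarrow n\text{-Gorenstein}$ shown in the summary diagram of Section 1).

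Next, with $A$ now known to be Cohen-Macaulay, $2$-Gorenstein, and of finitistic dimension $d = 2$, Corollary~\ref{syzygiesACM2} applies directly and yields
\[
\CM A \;=\; \Omega^{2}(\mod A).
\]
On the other hand, since $\domdim A \ge 2$, Proposition~\ref{MVpropo}(1) gives $\Omega^{2}(\mod A) = \Dom_{2}A$. Combining these two identities, one obtains $\CM A = \Dom_{2} A$ as full subcategories of $\mod A$.

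Finally, I would invoke Proposition~\ref{APTmaintheorem}: since $\domdim A \ge 2$, the functor $(-)e \colon \Dom_{2}A \to \mod B$ is an equivalence of categories. Composing with the equality $\CM A = \Dom_{2}A$ gives the desired equivalence $\CM A \simeq \mod B$. The whole argument is essentially a direct concatenation of three already-proved results, so I do not anticipate a genuine obstacle; the only small subtlety to verify carefully is that the implication $\domdim A \ge n \Rightarrow n$-Gorenstein is available unconditionally, which follows straight from the definition of $n$-Gorenstein as recalled in Section~1.
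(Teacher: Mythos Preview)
Your proposal is correct and follows exactly the same chain of identifications as the paper's proof: $\CM A = \Omega^2(\mod A) = \Dom_2 A \simeq \mod B$ via Corollary~\ref{syzygiesACM2}, Proposition~\ref{MVpropo}(1), and Proposition~\ref{APTmaintheorem}. The only difference is that you make explicit the verification that $\domdim A \ge 2$ implies $A$ is $2$-Gorenstein (needed for the first step), which the paper leaves implicit.
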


\begin{proof}
The assertion follows from $\CM A\stackrel{\ref{syzygiesACM2}}{=} \Omega^2(\mod A)\stackrel{{\rm\ref{MVpropo}(1)}}{=}\Dom_2A\stackrel{{\rm\ref{APTmaintheorem}}}{\simeq}\mod B$.
\end{proof}

We have the following explicit form of the $d$-$\Ext$-maximal module over an $d$-Gorenstein algebra. 
\begin{proposition}\cite[Corollary 3.5]{IZ}
 \label{strongcotiltforWAG}
Let $A$ be an algebra which is $n$-Gorenstein. Then $A$ has an $n$-$\Ext$-maximal cotilting $A$-module
\[W:=\Big(\bigoplus_{0\le i\le n-1}P_i(DA)\Big)\oplus\Omega^{n}(DA).\]
\end{proposition}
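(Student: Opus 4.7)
The plan is to verify that $W$ is a cotilting $A$-module with $\idim W\le n$, and then to identify it with the $n$-$\Ext$-maximal cotilting module furnished by Theorem \ref{syzygiesACM}.

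First, to bound $\idim W\le n$: since $A$ is $n$-Gorenstein, so is $A^{\op}$, which says that in the minimal injective coresolution of $A$ as a \emph{left} $A$-module, the $i$-th term has projective dimension at most $i$ for $0\le i\le n-1$. Dualising this coresolution produces the minimal projective resolution of $DA$ as a right $A$-module, so that $P_i(DA)\cong D(I^i({}_AA))$, and therefore $\idim_A P_i(DA) = \pdim_{A^{\op}} I^i({}_AA) \le i\le n-1$. The bound $\idim\Omega^n(DA)\le n$ then follows by induction along the syzygy short exact sequences $0\to\Omega^{i+1}(DA)\to P_i(DA)\to\Omega^i(DA)\to 0$, using $\idim\Omega^{i+1}\le\max(\idim P_i,\idim\Omega^i+1)$.

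Next, I would check the remaining cotilting axioms. The vanishing $\Ext^{\ge 1}(W,W)=0$ is immediate on projective summands, and on the summand $\Omega^n(DA)$ a standard dimension shift gives $\Ext^i(\Omega^n(DA),W)\cong\Ext^{i+n}(DA,W)=0$ for $i\ge 1$, using $\idim W\le n$. The condition $DA\in\widehat{\add W}$ is witnessed by the truncated projective resolution
\[0\to\Omega^n(DA)\to P_{n-1}(DA)\to\cdots\to P_0(DA)\to DA\to 0,\]
whose terms other than $DA$ all lie in $\add W$.

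Finally, to establish $n$-$\Ext$-maximality, I would apply Theorem \ref{syzygiesACM} to obtain an $n$-$\Ext$-maximal cotilting module $W_{\max}$ with ${}^{\perp}W_{\max}=\Omega^n(\mod A)$. The key observation is that $W$ itself lies in $\Omega^n(\mod A)$: the projective summands $P_i(DA)$ belong to $\add A\subseteq\Omega^n(\mod A)$, and $\Omega^n(DA)\in\Omega^n(\mod A)$ by construction. Hence $W\in{}^{\perp}W_{\max}$, which is precisely the inequality $W\ge W_{\max}$ in the cotilting order. Combining with $W_{\max}\ge W$ from the maximality of $W_{\max}$ yields $\add W=\add W_{\max}$, so $W$ is itself $n$-$\Ext$-maximal. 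The one subtle step is this final comparison with $W_{\max}$, but it becomes transparent upon noticing that every summand of $W$ tautologically lies in $\Omega^n(\mod A)$, so no direct analysis of ${}^{\perp}W$ is needed.
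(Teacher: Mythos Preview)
Your argument is correct. Note, however, that the paper does not supply its own proof of this proposition: it is quoted directly from \cite[Corollary 3.5]{IZ}, so there is no in-paper argument to compare against.

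That said, your proof is a genuinely nice alternative, because it is self-contained within the paper's framework. Rather than appealing to the external reference, you reduce the claim to Theorem \ref{syzygiesACM}, which the paper has already established independently. The three steps are all sound: the bound $\idim P_i(DA)\le i$ follows exactly as you say from the left--right symmetry of the $n$-Gorenstein condition and the duality $P_i(DA)\cong D(I^i({}_AA))$; the inductive bound on $\idim\Omega^n(DA)$ via the syzygy short exact sequences is routine; the cotilting axioms are checked correctly; and the final comparison $W\in\Omega^n(\mod A)={}^{\perp}W_{\max}$ forces $W\ge W_{\max}\ge W$, hence $\add W=\add W_{\max}$. There is no circularity, since the proof of Theorem \ref{syzygiesACM} in the paper uses only \cite{AR3} and the Auslander--Reiten correspondence, not this proposition. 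What your approach buys is that the explicit shape of $W$ drops out almost for free once Theorem \ref{syzygiesACM} is in hand, whereas the original argument in \cite{IZ} presumably constructs $W$ and verifies maximality more directly.
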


Now we give an equivalent condition for Cohen-Macaulay algebras of finitistic dimension $d$ to be $d$-Gorenstein in terms of the minimal injective coresolutions of the dualizing modules.
\begin{definition}
Let $A$ be an algebra, and $n\ge1$. We say that $X\in\mod A$ is \emph{$n$-Gorenstein} if
$\pdim I^i(X) \leq i$ for each $0\le i\le n-1$.
\end{definition}


The next result shows that, for an algebra $A$ with $n$-$\Ext$-maximal cotilting module $W$, the $n$-Gorensteiness of $A$ is equivalent to that of $W$.

\begin{proposition}\label{dgorensteinequalWgorenstein}
Let $A$ be an algebra and $n\ge0$ such that $A$ has an $n$-$\Ext$-maximal cotilting module $W$.
Then $A$ is $n$-Gorenstein if and only if $W$ is $n$-Gorenstein.
Moreover, in this case, we have $\pdim I^i(W) = \pdim I^i(A)$ for each $0\le i\le n-1$.
Therefore $\domdim A\ge n$ holds if and only if $\domdim W\ge n$ holds.

\end{proposition}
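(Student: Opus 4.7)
The plan is to first establish the key equality $\pdim I^i(W) = \pdim I^i(A)$ for all $0 \le i \le n-1$, under the assumption that $A$ is $n$-Gorenstein. Once this is in hand, both the iff for $n$-Gorensteinness and the equivalence of dominant dimensions drop out immediately: the direction ``$A$ $n$-Gor $\Rightarrow$ $W$ $n$-Gor'' of the iff is a direct restatement of the equality, while the converse ``$W$ $n$-Gor $\Rightarrow$ $A$ $n$-Gor'' follows by applying the same argument on the side of $B = \End_A(W)^{\op}$, using that $W_B$ is likewise an $n$-$\Ext$-maximal cotilting module and translating back via the tilting equivalence. The final assertion about $\domdim$ is a corollary, since $\domdim M \ge n$ is exactly the strengthening ``$\pdim I^i(M) = 0$ for $0 \le i \le n-1$'' of $n$-Gorensteinness.

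For the key equality, my plan is to use Proposition~\ref{strongcotiltforWAG} to obtain the explicit presentation $W = \bigoplus_{j=0}^{n-1} P_j(DA) \oplus \Omega^n(DA)$ in the $n$-Gorenstein case. The Fossum-Griffith-Reiten symmetry translates $n$-Gorensteinness of $A$ into the equivalent condition $\idim_A P_j(DA) \le j$ for $0 \le j \le n-1$, so that $I^i(P_j(DA)) = 0$ for $i > j$, whence
\[I^i(W) \cong \bigoplus_{j=i}^{n-1} I^i(P_j(DA)) \oplus I^i(\Omega^n(DA)) \qquad \text{for } 0 \le i \le n-1.\]
By dualizing (applying Proposition~\ref{strongcotiltforWAG} to $A^{\op}$ and then taking $D$), one also obtains the $n$-$\Ext$-maximal tilting $A$-module $T = \bigoplus_{j=0}^{n-1} I^j(A) \oplus \Omega^{-n}(A)$, in which the injective modules $I^j(A)$ of the coresolution of $A$ appear directly as summands. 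Matching the summands of $I^i(W)$ with those of $T$, using the canonical exact sequence $0 \to \Omega^n(DA) \to P_{n-1}(DA) \to \cdots \to P_0(DA) \to DA \to 0$ (along with its $D$-dual) as a bridge, should yield the desired equality of projective dimensions term by term.

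The hardest part will be the detailed bookkeeping in this matching step: assembling the minimal injective coresolution of $\Omega^n(DA)$ by splicing the (already-bounded) coresolutions of the individual summands $P_j(DA)$ along the exact sequence above, then identifying and cancelling any non-minimal summands that arise. The facts that $DA$ is injective and $\idim P_j(DA) \le j$ force the spliced coresolution to be nearly minimal in the relevant degrees, and a careful term-by-term comparison with the summands $I^j(A)$ of $T$ then delivers $\pdim I^i(W) = \pdim I^i(A)$, completing the proof.
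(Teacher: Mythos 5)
Your overall plan diverges from the paper's at the crucial converse direction, and that divergence is fatal.

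For the implication ``$W$ $n$-Gorenstein $\Rightarrow$ $A$ $n$-Gorenstein'' you propose a symmetry argument through $B=\End_A(W)^{\op}$, ``using that $W_B$ is likewise an $n$-$\Ext$-maximal cotilting module.'' But that hypothesis is not available here: the proposition only assumes $W$ is an $n$-$\Ext$-maximal cotilting $A$-module, \emph{not} that $W$ is a dualizing module (there is no assumption that $A$ is Cohen--Macaulay, nor that $\End_A(W)\simeq A$). The paper's Example~\ref{eg:non-CM}(3) exhibits a module satisfying condition (i) of Proposition~\ref{dualizing = 2sided Extmax+DCP} but not condition (ii) --- precisely the failure of the two-sided $\Ext$-maximality you would need. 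Likewise Example~\ref{eg:nonIG with Extmax cotilt} gives a 3-Gorenstein algebra with an $\Ext$-maximal cotilting $W$ for which $\End_A(W)\not\simeq A$, so ``translating back via the tilting equivalence'' wouldn't identify your $B$-side conclusion with a statement about $A$ anyway. The paper's actual argument in this direction is direct and one-sided: since $A\in{}^\perp W$ and $W$ is an Ext-injective cogenerator there, one has an exact sequence $0\to A\to W^0\to W^1\to\cdots$ with $W^i\in\add W$, and Lemma~\ref{Miyachilemma} applied to this sequence yields that $A$ is $n$-Gorenstein together with the inequality $\pdim I^i(A)\le\pdim I^i(W)$.

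This also affects your treatment of the equality $\pdim I^i(W)=\pdim I^i(A)$. The paper obtains it as the conjunction of two inequalities, one from each application of Lemma~\ref{Miyachilemma} (one assuming $W$ is $n$-Gorenstein, the other assuming $A$ is). Your plan for the forward direction --- using Proposition~\ref{strongcotiltforWAG} to write $W=\bigoplus_{j<n}P_j(DA)\oplus\Omega^n(DA)$ and splicing the coresolutions along the exact sequence $0\to\Omega^n(DA)\to P_{n-1}\to\cdots\to P_0\to DA\to0$ --- is essentially identical to the paper's (that splicing is exactly the content of Miyachi's Lemma~\ref{Miyachilemma}), and correctly gives $\pdim I^i(W)\le\pdim I^i(A)$. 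But the reverse inequality is not addressed by the forward argument alone; your proposed ``matching of summands of $I^i(W)$ with those of $T$'' compares an injective coresolution term against direct summands of a single (co)tilting module in different homological degrees, and it is not clear how that comparison would yield a degree-by-degree equality of projective dimensions. The reverse inequality genuinely requires the Miyachi argument starting from the $\add W$-coresolution of $A$, which is what the first half of the paper's proof supplies. The reduction of the $\domdim$ statement to the equality is correct and is the same in the paper.

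In short: replace the $B$-side symmetry argument with the direct Miyachi-lemma argument applied to $0\to A\to W^0\to W^1\to\cdots$, and you will simultaneously get the converse implication and the missing inequality $\pdim I^i(A)\le\pdim I^i(W)$, after which your plan coincides with the paper's.
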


To prove this, we need the following observation.

\begin{lemma}\label{Miyachilemma}
Let $0 \rightarrow M_{-1} \rightarrow M_0 \rightarrow M_1 \rightarrow \cdots $ be an exact sequence. 
\begin{enumerate}[\rm(1)]
\item\cite[Lemma 1.1]{M} For $i\ge0$, let $0 \rightarrow M_i \rightarrow I_i^0 \rightarrow I_i^1 \rightarrow I_i^2 \rightarrow \cdots $ be an injective coresolution of $M_i$. Then $M_{-1}$ has an injective coresolution of the following form:
$$0 \rightarrow M_{-1} \rightarrow I_0^i \rightarrow \bigoplus_{0 \leq r \leq 1}^{}{I_r^{1-r}} \rightarrow \cdots \rightarrow \bigoplus_{0 \leq r \leq s}^{}{I_r^{s-r}} \rightarrow \cdots $$

\item Let $n\ge0$. If each $M_i$ with $0\le i\le n-1$ is $n$-Gorenstein, then so is $M_{-1}$.
\end{enumerate}
\end{lemma}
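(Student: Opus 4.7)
The plan is to deduce part (2) from part (1) applied with minimal injective coresolutions. First I would fix $n$ and, for each $0\le i\le n-1$, take the minimal injective coresolution
\[0 \to M_i \to I^0(M_i) \to I^1(M_i) \to I^2(M_i) \to \cdots\]
so that by definition of $n$-Gorenstein, $\pdim I^j(M_i) \le j$ for each $0\le j\le n-1$. For $i\ge n$ the coresolutions can be chosen arbitrarily since they will not enter the relevant terms.

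Next I would apply part (1) to produce the injective coresolution of $M_{-1}$ whose $s$-th term is $J^s:=\bigoplus_{0\le r\le s} I^{s-r}(M_r)$. The key observation is the standard fact that the minimal injective coresolution of any module appears as a direct summand of any (not necessarily minimal) injective coresolution; hence $I^s(M_{-1})$ is a direct summand of $J^s$, and in particular $\pdim I^s(M_{-1})\le \pdim J^s$.

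Now I would estimate $\pdim J^s$ for $0\le s\le n-1$. For each $0\le r\le s\le n-1$ one has $0\le s-r\le n-1$, and since $M_r$ is $n$-Gorenstein we obtain $\pdim I^{s-r}(M_r)\le s-r\le s$. Taking the maximum over $0\le r\le s$ yields $\pdim J^s\le s$, and therefore $\pdim I^s(M_{-1})\le s$ for all $0\le s\le n-1$. This is exactly the condition for $M_{-1}$ to be $n$-Gorenstein.

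There is no serious obstacle in this argument; the only point requiring a moment of care is the passage from the non-minimal injective coresolution supplied by part (1) to the minimal one, which is handled by the summand remark above. The computation itself is routine once the double indexing from (1) is in place, and it is crucial only that the range of $r$ in $\bigoplus_{0\le r\le s}I^{s-r}(M_r)$ stays within $0\le r\le n-1$, which is automatic because $s\le n-1$.
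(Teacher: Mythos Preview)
Your argument for part (2) is correct. The paper actually gives no proof of this lemma at all: part (1) is simply cited from Miyachi, and part (2) is stated without justification, presumably as an immediate consequence. Your derivation---apply (1) with minimal coresolutions of the $M_i$, observe that the minimal coresolution of $M_{-1}$ is termwise a summand of the resulting (possibly non-minimal) one, and then bound $\pdim J^s$ for $s\le n-1$ using $\pdim I^{s-r}(M_r)\le s-r$---is exactly the intended routine, and your remark that only $M_0,\dots,M_{n-1}$ contribute to $J^0,\dots,J^{n-1}$ is the one point worth making explicit.
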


\begin{proof}[Proof of Proposition \ref{dgorensteinequalWgorenstein}]
Assume first that $W$ is $n$-Gorenstein.
Since $A \in{}^\perp W$ and $W$ is an Ext-injective cogenerator in ${}^\perp W$, we have an injective coresolution of $0 \rightarrow A \rightarrow W^0 \rightarrow W^1 \rightarrow \cdots$ with $W^i \in \add W$. 
Applying Lemma \ref{Miyachilemma} to this exact sequence, we obtain that $A$ is $n$-Gorenstein and $\pdim I^i(A) \leq \pdim I^i(W)$ for $0 \leq i \leq n-1$.

Now assume that $A$ is $n$-Gorenstein. 
By Proposition \ref{strongcotiltforWAG}, $W=(\bigoplus_{0\le i\le n-1}P_i(DA)) \oplus \Omega^n(DA)$.
Clearly $P_i(DA)$ is $n$-Gorenstein. Moreover there exists an exact sequence $0 \rightarrow\Omega^n(DA) \rightarrow P_{n-1} \rightarrow \cdots \rightarrow P_0 \rightarrow DA \rightarrow 0$ with $P_i$ projective. Applying Lemma \ref{Miyachilemma} with $M_{-1}=\Omega^n(DA)$ to this exact sequence, we obtain that $\Omega^n(DA)$ is $n$-Gorenstein and $\pdim I^i(W) \leq \pdim I^i(A)$ for $0 \leq i \leq n-1$. Thus also $W$ is $n$-Gorenstein.
\end{proof}

We give an example of a $d$-Gorenstein algebra which has a $\Ext$-maximal cotilting module of injective dimension $d$.

\begin{example}\label{eg:nonIG with Extmax cotilt}
Let $A=KQ/I$ be a Nakayama algebra given by the following quiver with relations:
\[Q=\Bigg[
\adjustbox{valign=m}{
\begin{tikzpicture}
\node[scale=0.9] at (0,0) {\begin{tikzcd}[cells={nodes={scale=0.9}}]	1 & 2 \\
	4 & 3
	\arrow["{a_1}", from=1-1, to=1-2]
	\arrow["{a_2}", from=1-2, to=2-2]
	\arrow["{a_3}", from=2-2, to=2-1]
	\arrow["{a_4}", from=2-1, to=1-1]
\end{tikzcd}};
\end{tikzpicture}}
\,\Bigg]
,\ \ \ I=\langle a_1 a_2 a_3, a_2 a_3 a_4 , a_3 a_4 a_1 a_2 \rangle.\]
It is direct to check that $A$ is 3-Gorenstein,
and that $W:=e_1 A/e_1 J^2 \oplus e_2 A \oplus e_3 A \oplus e_4 A =\sm{1\\2}\oplus\sm{2\\3\\4}\oplus\sm{3\\4\\1\\2}\oplus\sm{4\\1\\2\\3}$ is a cotilting $A$-module of injective dimension 3, which is also Ext-maximal by Proposition \ref{CMcharapropo}(iii).
Note that $A$ is not Iwanaga-Gorenstein, and not Cohen-Macaulay since $\End_A(W)$ is not isomorphic to $A$, see Proposition \ref{dualizing = 2sided Extmax+DCP}.
\end{example}

We summarise the obtained results in this section for the special case of Cohen-Macaulay algebras in the following corollary: 

\begin{corollary}\label{syzygiesACM3}
Let $A$ be a Cohen-Macaulay algebra with dualizing module $W$ with $\fidim A=d$. Then
$A$ is $d$-Gorenstein if and only if $W$ is $d$-Gorenstein.
In this case, we have 
$$\CM A= \Omega^d(\mod A) \ \mbox{ and }\ \add W =\add\big(\Big(\bigoplus_{0\le i\le d-1}P_i(DA)\Big) \oplus \Omega^d(DA)\big).$$
Moreover, $\domdim A\ge d$ holds if and only if $\domdim W\ge d$ holds.  If $A$ is non-selfinjective, then these conditions are equivalent to $\domdim A=d$ and also to $\domdim W=d$.
\end{corollary}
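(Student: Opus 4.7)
The plan is to assemble this corollary from the structural results already proved in this section. The preparatory move is to note that the dualizing module $W$ is a $d$-Ext-maximal cotilting $A$-module with $\idim W = d$: Ext-maximality comes from Proposition \ref{dualizing = 2sided Extmax+DCP}, and the identity \eqref{4 dimension} together with $\fidim A = d$ yields $\idim W = d$ and $\I^{<\infty}(A) = \I^{\le d}(A)$, which forces $W$ to be $d$-Ext-maximal. Applying Proposition \ref{dgorensteinequalWgorenstein} with $n = d$ now produces both the equivalence ``$A$ is $d$-Gorenstein iff $W$ is $d$-Gorenstein'' and the equalities $\pdim I^i(A) = \pdim I^i(W)$ for $0 \le i \le d-1$. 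The latter immediately give the equivalence $\domdim A \ge d \Leftrightarrow \domdim W \ge d$, since both conditions amount to the vanishing $\pdim I^i = 0$ for $i \le d-1$.

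Under the common $d$-Gorenstein assumption, $\CM A = \Omega^d(\mod A)$ is a direct instance of Corollary \ref{syzygiesACM2}, while the stated description of $\add W$ follows from Proposition \ref{strongcotiltforWAG} applied to the unique $d$-Ext-maximal cotilting module (which must coincide with $W$).

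The delicate step, and the main obstacle I anticipate, is the final claim that non-selfinjectivity upgrades ``$\domdim \ge d$'' to ``$\domdim = d$'' for both $A$ and $W$. The strategy is to rule out $\domdim > d$ in each case. For $W$ the argument is direct: if $\domdim W > d$, then combined with $\idim W = d$ one gets a bounded exact sequence $0 \to W \to I^0(W) \to \cdots \to I^d(W) \to 0$ consisting entirely of projective-injective terms, which splits successively (each short piece splits because the right-hand term is projective) to show $W$ itself is projective-injective. Hence $\idim W = 0$, forcing $d = 0$, and then $W = DA$ is projective, equivalently $A$ is selfinjective. For $A$, the plan is to bootstrap through the Gorenstein dimension: $\domdim A > d$ combined with $d$-Gorensteinness gives $\pdim I^i(A) = 0 \le i$ for all $i \le d$, so $A$ is $(d+1)$-Gorenstein; since $W$ remains $(d+1)$-Ext-maximal (as $\idim W = d \le d+1$), Proposition \ref{dgorensteinequalWgorenstein} at $n = d+1$ then yields $\pdim I^d(W) = \pdim I^d(A) = 0$, i.e.\ $\domdim W > d$, reducing to the case just treated.

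Only the boundary case $d = 0$ genuinely uses the non-selfinjectivity hypothesis: for $d \ge 1$, $\fidim A = d$ automatically forces $A$ non-selfinjective. When $d = 0$, both $\domdim A \ge 0$ and $\domdim W \ge 0$ are vacuous, and the claim reduces to ``non-selfinjective $\Rightarrow \domdim A = 0 = \domdim W$''. Here $\fidim A = 0$ forces $\soc A_A$ to contain every simple by Example \ref{eg:non-CM}(1), so $\domdim A \ge 1$ would make the injective envelope of each simple projective, giving $\inj A = \proj A$ and hence $A$ selfinjective; similarly $\domdim W = \domdim DA \ge 1$ forces $DA$ projective, again giving $A$ selfinjective.
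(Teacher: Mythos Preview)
Your proposal is correct and follows the same overall architecture as the paper: invoke Proposition~\ref{dgorensteinequalWgorenstein} for the first equivalence and the $\domdim$ equivalence, then Corollary~\ref{syzygiesACM2} and Proposition~\ref{strongcotiltforWAG} for the displayed equalities.

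The only real divergence is in the final ``non-selfinjective $\Rightarrow$ equality'' step. For $\domdim W$, both you and the paper argue that $\domdim W>d$ together with $\idim W=d$ forces $W$ to be projective-injective; the paper then concludes by counting indecomposable summands of $W$ (a cotilting module has as many as $A$, so $\proj A=\inj A$), whereas you pass through $d=0$ and $W=DA$. For $\domdim A$, the paper argues directly via the finitistic projective dimension: $\domdim A>d$ with $A$ non-selfinjective would produce a module of projective dimension $d+1$, contradicting $\fpdim A=d$ from \eqref{4 dimension}. Your route instead observes that $\domdim A>d$ makes $A$ $(d{+}1)$-Gorenstein, then reapplies Proposition~\ref{dgorensteinequalWgorenstein} at level $d{+}1$ (using that $W$ is automatically $(d{+}1)$-Ext-maximal since $\I^{<\infty}(A)=\I^{\le d}(A)$) to push the inequality to $W$ and reduce to the already-handled case. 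Your bootstrap is slightly longer but more self-contained, avoiding the somewhat terse $\pdim\Omega^{-d}A$ claim in the paper. The separate $d=0$ discussion you give is redundant (your main arguments already cover it) but harmless.
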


\begin{proof}
The first assertion is Proposition \ref{dgorensteinequalWgorenstein}. The second one follows from Corollary \ref{syzygiesACM2} and Proposition \ref{strongcotiltforWAG}.
We prove the last one. Again by Proposition \ref{dgorensteinequalWgorenstein}, $\domdim A\ge d$ if and only if $\domdim W\ge d$. Under the assumption that $A$ is non-selfinjective, they are equivalent to $\domdim A=d$ and $\domdim W=d$ respectively. In fact, if $\domdim A>d$, then either $A$ is selfinjective or $\pdim\Omega^{-d}A=d+1$ holds, a contradiction. Similarly, if $\domdim W>d$, then $\idim W=d$ implies that $W$ is projective-injective. Since the number of non-isomorphic indecomposable direct summands of $W$ coincides with that of $A$, it follows that 
$A$ is selfinjective, a contradiction.
\end{proof}

In a forthcoming work, we will introduce \emph{minimal Auslander-Cohen-Macaulay algebras} as Cohen-Macaulay algebras with $\domdim W \geq d \geq \idim W$ for some $d \geq 2$, generalising the minimal Auslander-Gorenstein algebras of \cite{IyaSol} and establishing a generalisation of the Auslander(-Solberg) correspondence.


\section{Cohen-Macaulay algebras coming from triangulated categories}

In this section, we give a general construction of Cohen-Macaulay algebras as endomorphism algebras of objects in triangulated categories satisfying a certain set of axioms.
Recall that a \emph{Serre functor} of a $k$-linear Hom-finite triangulated category $\T$ is an autoequivalence $\nu:\T\to\T$ such that there exists a bifunctorial isomorphism $D\Hom_\T(X,Y)\simeq \Hom_\T(Y,\nu X)$ for each $X,Y\in\T$. In this case, $\tau:=\nu\circ[-1]$ is called the \emph{Auslander-Reiten translation}.

We start with recalling the result of Jorgensen \cite{J} below. The triangulated category $\T/[\F]$ should be regarded as the $d=1$ case of the $d$-Calabi-Yau reduction given in \cite{IY} for $d\ge2$.

\begin{proposition}\cite{J}
Let $\T$ be a $k$-linear Hom-finite Krull-Schmidt triangulated category with Serre functor $\nu$, and $\F$ a functorially finite subcategories of $\T$ satisfying $\F=\tau\F$. Then the ideal quotient $\T/[\F]$ has a canonical structure of a triangulated category (called \emph{reduction} of $\T$ by $\F$).
\end{proposition}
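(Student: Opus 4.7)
The plan is to follow the Iyama-Yoshino construction of \cite{IY} specialised to $d=1$, the setting originally pioneered by Jorgensen in \cite{J}. First, I would define a candidate shift functor $\langle 1 \rangle$ on $\T/[\F]$ as follows. Since $\F$ is covariantly finite in $\T$, each object $X \in \T$ admits a left $\F$-approximation $f^X \colon X \to F^X$; I fix such choices and embed each in a triangle
\[ X \xrightarrow{f^X} F^X \xrightarrow{g^X} X\langle 1 \rangle \xrightarrow{h^X} X[1]. \]
For a morphism $\alpha \colon X \to Y$ in $\T$, the composite $f^Y \circ \alpha \colon X \to F^Y$ lands in $\F$ and hence factors as $\beta \circ f^X$ for some $\beta \colon F^X \to F^Y$ by the left-approximation property of $f^X$; applying TR3 to the resulting morphism of triangles produces a morphism $\alpha\langle 1\rangle \colon X\langle 1\rangle \to Y\langle 1\rangle$. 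Standard diagram chases show that different choices of $\beta$, $f^X$, $f^Y$ agree modulo $[\F]$, so $\langle 1 \rangle$ descends to a well-defined additive endofunctor of $\T/[\F]$.

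Second, I would define a candidate inverse $\langle -1 \rangle$ dually via right $\F$-approximations (which exist since $\F$ is contravariantly finite) and prove that $\langle 1 \rangle$ and $\langle -1 \rangle$ are mutually inverse autoequivalences of $\T/[\F]$. This is the step where the hypothesis $\F = \tau\F$ plays its essential role. Concretely, one must show that in the defining triangle above, $g^X \colon F^X \to X\langle 1\rangle$ is automatically a right $\F$-approximation of $X\langle 1\rangle$. For any $F' \in \F$, applying $\Hom_\T(F',-)$ to the defining triangle reduces this claim to vanishing of $h^X_* \colon \Hom_\T(F', X\langle 1\rangle) \to \Hom_\T(F', X[1])$, equivalently to injectivity of the map $\Hom_\T(F', X[1]) \to \Hom_\T(F', F^X[1])$ induced by $f^X[1]$. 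By Serre duality this latter map is $k$-linearly dual to the map $\Hom_\T(F^X, \tau F') \to \Hom_\T(X, \tau F')$ induced by $f^X$, which is surjective because $\tau F' \in \F$ by the hypothesis $\F = \tau\F$ and $f^X$ is a left $\F$-approximation. Hence the original map is injective, yielding the required vanishing. The dual argument using right approximations then shows $\langle -1\rangle$ is quasi-inverse to $\langle 1\rangle$.

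Third, having $\langle 1\rangle$ as an autoequivalence of $\T/[\F]$, I would declare a sequence $X \to Y \to Z \to X\langle 1\rangle$ in $\T/[\F]$ to be a distinguished triangle if it arises (up to isomorphism in the quotient) from a distinguished triangle $X \to Y \to Z \to X[1]$ in $\T$ together with a canonical connecting morphism $Z \to X\langle 1\rangle$ built from compatible left $\F$-approximations of the three terms via the octahedral axiom in $\T$. Axioms TR1, TR2, and TR3 then follow in a relatively direct manner by lifting to $\T$ and comparing triangles there. The main obstacle, just as in \cite{IY}, will be verifying the octahedral axiom TR4 in $\T/[\F]$: given a composable pair of morphisms in the quotient and the distinguished triangles built on each piece, one must produce a coherent octahedron. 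This requires a delicate simultaneous choice of approximations for the three objects involved and a careful $3\times 3$-diagram chase in $\T$, combined with the autoequivalence property of $\langle 1\rangle$ established in step two, to assemble the desired octahedron that descends correctly to $\T/[\F]$.
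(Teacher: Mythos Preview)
The paper does not provide its own proof of this proposition; it is stated with the citation \cite{J} and used as a black box. Your outline is a correct sketch of the Jorgensen/Iyama--Yoshino construction, and the crucial step you isolate---using Serre duality together with $\F=\tau\F$ to show that in the defining triangle the morphism $g^X$ is automatically a right $\F$-approximation---is exactly the content of what the paper later records as Proposition~\ref{defect formula} (the ``defect formula''). So while there is no proof in the paper to compare against, your argument is sound and in fact anticipates the one technical lemma the authors do spell out for their later use.
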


The setting of our main result in this section is stated in terms of reduction.

\begin{definition}\label{axiom T,M,F}
Our setting is the following, where each subcategory is assumed to be full and closed under direct sums and direct summands.
\begin{enumerate}[\rm(a)]
\item Let $\T$ be a $k$-linear Hom-finite Krull-Schmidt triangulated category with Serre functor $\nu$, and $\M\supset\F$ functorially finite subcategories of $\T$ satisfying $\tau\M=\M=\M[2]$ and $\tau\F=\F$.
\item Let $\langle1\rangle$ be the suspension of the reduction $\overline{\T}:=\T/[\F]$, and $\overline{(-)}:\T\to\overline{\T}$ a canonical functor. Then $\overline{\M}=\overline{\M}\langle2\rangle$ and $\overline{\nu\M}=\overline{\nu\M}\langle2\rangle$.
\item (c1) $\overline{\T}(\overline{\nu\M},\overline{\M})=0$ and
(c2) $\overline{\T}(\overline{\M},\overline{\nu\M})=0$.
\item For any proper subcategory $\F'$ of $\F$, let $\underline{\T}:=\T/[\F']$ and $\underline{(-)}:\T\to\underline{\T}$ a canonical functor. Then (d1) $\underline{\T}(\underline{\nu\M},\underline{\M})\neq0$ and
(d2) $\underline{\T}(\underline{\M},\underline{\nu\M})\neq0$.
\end{enumerate}
\end{definition}

Note that the condition (a) above implies the equalities
\[\nu\M=\M[1]=\M[-1]=\nu^{-1}\M.\]

\begin{example}\label{eg:sec3 setup}
Let $\T$ be a 1-Calabi-Yau triangulated category such that $[2]=1$. Then $\tau=1$ also holds, and hence two equalities $\tau\M=\M=\M[2]$ and $\tau\F=\F$ in (a) are automatic. 
A systematic family of 1-Calabi-Yau triangulated categories $\T$ satisfying $[2]=1$ is given by the stable category $\underline{\CM} R$ of maximal Cohen-Macaulay modules over an isolated hypersurface singularity $R$ of dimension 2; see the proof of Theorem \ref{main ePie 1}.
If, moreover, $R$ is a simple singularity of dimension $2$, then the condition (b) is also satisfied for all subcategories $\F$ of $\T$; see the proof of Theorem \ref{main ePie 1}.

For a concrete example, consider $R=k[[x,y,z]]/(x^7-yz)$ -- a simple singularity $R$ of dimension $2$ and Dynkin type $A_6$.
Then we can display the Auslander-Reiten quiver of $\T:=\underline{\CM} R$ as follows.
\[\begin{tikzcd}
	1 && 2 && 3 && 4 && 5 && 6
	\arrow["{a_1}", shift left=1, from=1-1, to=1-3]
	\arrow["{b_1}", shift left=1, from=1-3, to=1-1]
	\arrow["{a_2}", shift left=1, from=1-3, to=1-5]
	\arrow["{b_2}", shift left=1, from=1-5, to=1-3]
	\arrow["{a_3}", shift left=1, from=1-5, to=1-7]
	\arrow["{b_3}", shift left=1, from=1-7, to=1-5]
	\arrow["{a_4}", shift left=1, from=1-7, to=1-9]
	\arrow["{b_4}", shift left=1, from=1-9, to=1-7]
	\arrow["{a_5}", shift left=1, from=1-9, to=1-11]
	\arrow["{b_5}", shift left=1, from=1-11, to=1-9]
\end{tikzcd}\]

We now show that conditions (c) and (d) are satisfied for $(\M=\add\{1,2,3,6\},\F=\add\{1,3,6\})$.
First, the quotient category $\overline{\T}$ has the following AR quiver, where we encircled the nodes corresponding to objects in $\ind\overline{\M}$.
\[
\overline{\T}=\T/[\add\{1,3,6\}]:\quad 
\begin{tikzcd}[cells={nodes={circle}}]
 |[draw=white,fill=white,inner sep=1pt]|\phantom{1} &&  |[draw,inner sep=1pt]|2 && |[draw=white,fill=white,inner sep=1pt]|\phantom{3} && 4\ar[rr, shift left=1] && 5\ar[ll, shift left=1] && |[draw=white,fill=white,inner sep=1pt]|\phantom{6}
\end{tikzcd}\]
and we have $\ind\overline{\M}=\{2\}$ and $\ind\overline{\nu\M}=\{5\}$. Therefore the condition (c) is satisfied.

To check the condition (d), it suffices to consider $\F'$ given by $\add\{1,3\}$, $\add\{1,6\}$, and $\add\{3,6\}$; these have the following respective AR quivers, where the blue nodes represent objects in $\ind\underline{\F}$.
\[
\begin{tikzpicture}[
mutable/.style={draw,circle,inner sep=1pt,outer sep=3pt},
frozen/.style={draw=cyan,text=cyan,fill=white,circle,inner sep=1pt,outer sep=3pt},
frozenOut/.style={draw=white,white,fill=white,circle,inner sep=1pt,outer sep=3pt}
]
\matrix[column sep=.9cm,row sep=5pt]{
\node[] {$\T/[\add\{1,3\}]$ :}; &
\node[frozenOut] (u1) {1}; &&
\node[mutable]   (u2) {2}; &&
\node[frozenOut] (u3) {3}; &&
\node (u4) {4}; &&
\node (u5) {5}; && 
\node[frozen]    (u6) {6}; \\
\node[] {$\T/[\add\{1,6\}]$ :}; &
\node[frozenOut] (v1) {1}; &&
\node[mutable]   (v2) {2}; &&
\node[frozen]    (v3) {3}; &&
\node (v4) {4}; &&
\node (v5) {5}; &&
\node[frozenOut] (v6) {6}; \\
\node[align=left] {$\T/[\add\{3,6\}]$ :}; &
\node[frozen]    (w1) {1}; &&
\node[mutable]   (w2) {2}; &&
\node[frozenOut] (w3) {3}; &&
\node (w4) {4}; &&
\node (w5) {5}; &&
\node[frozenOut] (w6) {6}; \\
};
\begin{scope}[
shu/.style={->,transform canvas={yshift=.6ex}},
shd/.style={->,transform canvas={yshift=-.6ex}},
]
\draw[shu] (u4) -- (u5); \draw[shu] (u5) -- (u6);
\draw[shd] (u6) -- (u5); \draw[shd] (u5) -- (u4);

\draw[shu] (v2) -- (v3);\draw[shu] (v3) -- (v4);\draw[shu] (v4) -- (v5);
\draw[shd] (v5) -- (v4);\draw[shd] (v4) -- (v3);\draw[shd] (v3) -- (v2);

\draw[shu] (w1) -- (w2);\draw[shu] (w4) -- (w5);
\draw[shd] (w2) -- (w1);\draw[shd] (w5) -- (w4);
\end{scope}
\end{tikzpicture}
\]
Then the condition (d) is satisfied for $\F'=\add\{1,3\}$ since $6\in\M$, $5\in\nu\M$ and $\T/[\F'](5,6)\neq0\neq\T/[\F'](6,5)$. Also it is satisfied for $\F'=\add\{1,6\}$ since $3\in\M$, $4\in\nu\M$ and $\T/[\F'](3,4)\neq0\neq\T/[\F'](4,3)$. Finally it is satisfied for $\F'=\add\{3,6\}$ since $1\in\M\cap\nu\M$ and $\T/[\F'](1,1)\neq0$.
\end{example}

\begin{theorem}\label{main triangulated G}
Under the assumptions (a)--(d), assume further that $\M$ has an additive generator $M$. Then the following assertions hold.
\begin{enumerate}[\rm(1)]
\item $A:=\End_{\M}(M)$ is a Cohen-Macaulay algebra.
\item If $\M=\F$, then $\fidim A=0$. Otherwise, $\fidim A=2$.
\item If $\M=\nu\M$, then $A$ is selfinjective. If $\M\neq\nu\M$ and $\F=\nu\F$, then $\domdim A=2$. 
\end{enumerate}
\end{theorem}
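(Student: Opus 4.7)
The plan is to verify Cohen-Macaulayness by constructing an explicit $A$-bimodule $W$ and checking the three conditions of Proposition \ref{dualizing = 2sided Extmax+DCP}. The central tool is the Yoneda functor $G := \Hom_\T(M,-) : \T \to \mod A$, which restricts to an equivalence $\add M \xrightarrow{\sim} \proj A$ sending $M \mapsto A$ and, by Serre duality, induces an iso $G(\nu M) \simeq DA$ of right $A$-modules. Thus projective and injective $A$-modules correspond to summands of $M$ and $\nu M$ in $\T$, and Ext-computations in $\mod A$ can be transferred to Hom-computations in $\T$ and in the reduction $\overline{\T} = \T/[\F]$, where the axioms (b), (c), (d) come into play.

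I would decompose $M = N \oplus F_0$ with $F_0$ an additive generator of $\F$ (realized as a summand of $M$, using $\F \subset \M$) and $N$ the complement. Axiom (c2) forces every morphism $M \to \nu M$ in $\T$ to factor through $\F$, so a right $\add F_0$-approximation $F \to \nu N$ is surjective, and I may complete it to a triangle $X \to F \to \nu N \to X[1]$ in $\T$. Applying $G$ and analyzing $\Omega^i(DA)$ for $i = 1, 2$ via this triangle together with axiom (b), the candidate dualizing module is
\[W := G(F_0 \oplus X),\]
with the bimodule structure supplied by Serre duality and the identification $\End_\T(W_\T) \simeq A$ built from the axioms. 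Verifying Proposition \ref{dualizing = 2sided Extmax+DCP} then reduces to (i) $W$ being an Ext-maximal cotilting right $A$-module with $\idim W \le 2$; (ii) the natural map $A \to \End_A(W)$ being an iso; (iii) $W$ being an Ext-maximal cotilting left $A$-module. The Ext-vanishings $\Ext^i_A(W, W) = 0$ reduce via $G$ to Hom-vanishings in $\T$ or $\overline{\T}$ controlled by (c); the bound $\idim W \le 2$ follows from the $\langle 2 \rangle$-periodicity of (b) on $\overline{\M}$; Ext-maximality is checked by Proposition \ref{CMcharapropo}(iii) together with the minimality axiom (d); the double centralizer (ii) and its left analogue (iii) come from Serre duality and the symmetric role of $\M$ and $\nu\M$ in the axioms.

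Claims (2) and (3) then follow by inspection. If $\M = \F$ then $N = 0$, $X = 0$, and $W = G(M) = A$, whence $\fidim A = 0$. Otherwise $X \neq 0$ provides a summand of $W$ of injective dimension exactly $2$, giving $\fidim A = \idim W = 2$ by \eqref{4 dimension}. For (3), $\M = \nu\M$ forces $\F = \M$ by the minimality axiom (d), so $\fidim A = 0$ and $A \simeq DA$ shows $A$ is selfinjective. If instead $\M \neq \nu\M$ and $\F = \nu\F$, then $F_0 \in \F \subset \nu\F \subset \nu\M$ makes each summand of $G(F_0)$ projective-injective, and the approximation triangle places the first two terms of the minimal injective coresolution of $A$ in $\add G(F_0)$, giving $\domdim A \ge 2$; equality is forced by $\M \neq \nu\M$.

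The main obstacle is simultaneously verifying (i) and (ii) of Proposition \ref{dualizing = 2sided Extmax+DCP}. Both demand a delicate interplay of the approximation triangle, the $\langle 2 \rangle$-periodicity in $\overline{\T}$, and Serre duality, and the minimality axiom (d) plays a decisive role: inflating $\F$ would break Ext-maximality via Proposition \ref{CMcharapropo}(iii), while shrinking it would reintroduce nonzero morphisms between $\overline{\M}$ and $\overline{\nu\M}$ and kill the required Ext-vanishing.
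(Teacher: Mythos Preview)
Your overall strategy---build a candidate dualizing module from approximation triangles and verify Proposition \ref{dualizing = 2sided Extmax+DCP}---matches the paper's, but the specific candidate $W = G(F_0 \oplus X)$ is wrong, and this is a genuine gap rather than a matter of presentation.

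The summand $G(F_0) = \Hom_\T(M,F_0) = P_{F_0}$ is \emph{projective}, not injective. For $W$ to be cotilting you need $\idim_A W < \infty$, but the whole point of the theorem is that $A$ is Cohen--Macaulay without being Iwanaga--Gorenstein, so projectives can have infinite injective dimension. Concretely, in the running $A_6$ example with $J=\{1,2,3,6\}$ one has $\idim A = \infty$ while $P_1,P_6$ are injective, so $\idim P_3 = \infty$ (or $\idim P_2=\infty$); either way your $W$ is not cotilting. The paper's dualizing module is $U = I_\F \oplus U_{\F^{\c}}$, with the \emph{injective} modules $I_\F = \T(M,\nu F_0)$ as the frozen part, not the projectives $P_\F$.

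A second, related issue: you take a single right $\F$-approximation $X \to F \to \nu N$, but the $\langle 2\rangle$-periodicity in axiom (b) means two steps are needed. The paper builds, for each $X\in\ind\M\setminus\ind\F$, \emph{two} $\F$-approximation triangles of $X$ (not of $\nu N$), splices them into a length-two injective coresolution $0 \to U_X \to I_{F'_X} \to I_{F_X} \to I_X \to 0$, and reads off $U = \mu^{+2}_{I_\F}(DA)$. Crucially, $U_X$ is \emph{not} of the form $\T(M,-)$ of an object in $\T$: it sits in an exact sequence $0\to \overline{I}_{\Omega^2_X}\to I_{\Omega^2_X}\to U_X\to \overline{P}_{\tau X}\to 0$ involving modules over the quotient $\overline{A}$. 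So the hope that the dualizing module is simply $G$ of something in $\T$ is too optimistic.

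Minor points: in (3), $\M=\nu\M$ forces $\M=\F$ via (c1) (the identity on $M$ must factor through $\F$), not via (d); and your $\domdim$ argument only produces one projective-injective term in a coresolution, whereas two are needed---the paper gets $\domdim W\ge 2$ from the four-term sequence above and then transfers to $\domdim A$ via Corollary \ref{syzygiesACM3}.
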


In the rest of this section, we give a proof of Theorem \ref{main triangulated G}.
We remark that the last conditions (d) are necessary only in the last 2 steps: Lemma \ref{describe top G} and Proposition \ref{U is dualizing G}, which shows the two-sided Ext-maximality (Proposition \ref{dualizing = 2sided Extmax+DCP} (i)(ii)) of a certain cotilting module $U$.  Without (d), $U$ still satisfies $\idim U\le 2$ and $\End_A(U)\simeq A$; c.f. discussion in Example \ref{eg:non-CM}(3).

\begin{example}\label{eg:sec3 loewy}
We continue with Example \ref{eg:sec3 setup} where $\T$ comes from simple singularity of type $A_6$ and $(\M=\add\{1,2,3,6\},\F=\add\{1,3,6\})$.
Let us present $A$ by drawing the Loewy structure of their indecomposable projective and indecomposable injective modules.  
The calculation is simple using the fact that $\T\simeq \proj \Pi$ where $\Pi$ is the preprojective algebra of type $A_6$.
Recall that the Loewy structure of $\Pi = D\Pi$ is given by:
\[
\Pi:\;\; \sm{1\\2\\3\\4\\5\\6} \oplus 
\sm{\ssp2\ssp\ssp\ssp\ssp\\1\ssp3\ssp\ssp\ssp\\ \ssp2\ssp4\ssp\ssp \\ \ssp\ssp3\ssp5\ssp\\ \ssp\ssp\ssp4\ssp6\\ \ssp\ssp\ssp\ssp5\ssp}\oplus
\sm{\ssp\ssp3\ssp\ssp\ssp\\ \ssp2\ssp4\ssp\ssp\\ 1\ssp3\ssp5\ssp\\\ssp2\ssp4\ssp6\\ \ssp\ssp3\ssp5\ssp\\ \ssp\ssp\ssp4\ssp\ssp}\oplus 
\sm{\ssp\ssp\ssp4\ssp\ssp\\ \ssp\ssp3\ssp5\ssp\\ \ssp2\ssp4\ssp6\\ 1\ssp3\ssp5\ssp\\ \ssp2\ssp4\ssp\ssp\\ \ssp\ssp3\ssp\ssp\ssp}\oplus 
\sm{
\ssp\ssp\ssp\ssp5\ssp\\ \ssp\ssp\ssp4\ssp6\\ \ssp\ssp3\ssp5\ssp\\ \ssp2\ssp4\ssp\ssp\\ 1\ssp3\ssp\ssp\ssp\\ \ssp2\ssp\ssp\ssp\ssp}\oplus \sm{6\\5\\4\\3\\2\\1}
\]
Then $A = e\Pi e$ where $e=e_1+e_2+e_3+e_6$, so the Loewy structure of $P_i:=e_iA \in \proj A$ and $I_i:=D(Ae_i)\in \inj A$ are just giving by deleting the composition factors $4,5$ (now coloured in light grey) from that of $e_i\Pi$ and $D(\Pi e_i)$ respectively.

\[
A:\;\; \sm{1\\2\\3\\\gr{4}\\\gr{5}\\6} \oplus 
\sm{\ssp2\ssp\ssp\ssp\ssp\\1\ssp3\ssp\ssp\ssp\\ \ssp2\ssp\gr{4}\ssp\ssp \\ \ssp\ssp3\ssp\gr{5}\ssp\\ \ssp\ssp\ssp\gr{4}\ssp6\\ \ssp\ssp\ssp\ssp\gr{5}\ssp}\oplus
\sm{\ssp\ssp3\ssp\ssp\ssp\\ \ssp2\ssp\gr{4}\ssp\ssp\\ 1\ssp3\ssp\gr{5}\ssp\\\ssp2\ssp\gr{4}\ssp6\\ \ssp\ssp3\ssp\gr{5}\ssp\\ \ssp\ssp\ssp\gr{4}\ssp\ssp}
\oplus \sm{6\\\gr{5}\\\gr{4}\\3\\2\\1}
\qquad DA:\;\; \sm{6\\\gr{5}\\\gr{4}\\3\\2\\1} \oplus 
\sm{
\ssp\ssp\ssp\ssp\gr{5}\ssp\\ \ssp\ssp\ssp\gr{4}\ssp6\\ \ssp\ssp3\ssp\gr{5}\ssp\\ \ssp2\ssp\gr{4}\ssp\ssp\\ 1\ssp3\ssp\ssp\ssp\\ \ssp2\ssp\ssp\ssp\ssp}\oplus
\sm{\ssp\ssp\ssp\gr{4}\ssp\ssp\\ \ssp\ssp3\ssp\gr{5}\ssp\\ \ssp2\ssp\gr{4}\ssp6\\ 1\ssp3\ssp\gr{5}\ssp\\ \ssp2\ssp\gr{4}\ssp\ssp\\ \ssp\ssp3\ssp\ssp\ssp}\oplus
\sm{1\\2\\3\\\gr{4}\\\gr{5}\\6} \]
 Note that $\add\{P_X\mid X\in\F\}\supset \proj A\cap \inj A\simeq \M\cap\nu\M$ thanks to condition (c), and this example shows that the inclusion can be strict.
Also, using these diagrams one can check that $\pdim I_3\neq 0$ and $0\to A\to I_1\oplus I_3^{\oplus 2} \oplus I_6^{\oplus 3}$ is an injective hull -- this shows that $A$ is not even $1$-Gorenstein.
\end{example}

We first recall the following simple observation, which is a special case of Auslander's defect formula.

\begin{proposition}\label{defect formula}
Let $\T$ be a triangulated category with Serre functor $\nu$. For each triangle $Z\xrightarrow{g} Y\xrightarrow{f} X\xrightarrow{e} Z[1]$ in $\T$ and object $T\in\T$,
the map $(f\cdot):\T(T,Y)\to\T(T,X)$ is surjective if and only if the map $(\cdot g):\T(Y,\tau T)\to\T(Z,\tau T)$ is surjective.
\end{proposition}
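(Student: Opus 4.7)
The plan is to show that each of the two surjectivity conditions is equivalent to the vanishing of a connecting map in a suitable long exact sequence, and then to identify the two connecting maps via Serre duality.

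First, I would apply the cohomological functor $\T(T,-)$ to the given triangle to obtain the exact sequence
\[
\T(T,Y)\xrightarrow{(f\cdot)}\T(T,X)\xrightarrow{(e\cdot)}\T(T,Z[1]),
\]
so that $(f\cdot)$ is surjective if and only if the connecting map $(e\cdot)$ vanishes. In parallel, I would rotate the triangle to $X[-1]\xrightarrow{-e[-1]}Z\xrightarrow{g}Y\xrightarrow{f}X$ and apply the contravariant functor $\T(-,\tau T)$ to obtain the exact sequence
\[
\T(Y,\tau T)\xrightarrow{(\cdot g)}\T(Z,\tau T)\xrightarrow{(\cdot e[-1])}\T(X[-1],\tau T),
\]
so that $(\cdot g)$ is surjective if and only if $(\cdot e[-1])$ vanishes.

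The remaining, and essentially only nontrivial, step is to match the two connecting maps under Serre duality. Using $\tau=\nu\circ[-1]$ together with the natural isomorphism $\T(T,-)\simeq D\T(-,\nu T)$, one obtains identifications $\T(T,X)\simeq D\T(X[-1],\tau T)$ and $\T(T,Z[1])\simeq D\T(Z,\tau T)$. The naturality of Serre duality in the first variable, applied to the morphism $e\colon X\to Z[1]$ (equivalently $e[-1]\colon X[-1]\to Z$), then forces the induced map $(e\cdot)$ to agree with $D$ applied to $(\cdot e[-1])$. In particular $(e\cdot)=0$ if and only if $(\cdot e[-1])=0$, which combined with the two equivalences above yields the proposition.

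The only subtle point I anticipate is tracking signs and writing the Serre duality naturality square carefully; otherwise the argument is a routine combination of the long exact sequence of a triangle with the defining property of the Serre functor, and no substantial obstacle is expected.
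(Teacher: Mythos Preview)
Your proposal is correct and follows essentially the same approach as the paper: reduce both surjectivity conditions to the vanishing of a connecting map and match them via Serre duality. The only cosmetic difference is that the paper extends the covariant long exact sequence one step further and identifies $(g[1]\cdot):\T(T,Z[1])\to\T(T,Y[1])$ with the $k$-dual of $(\cdot g)$, whereas you rotate the triangle, apply $\T(-,\tau T)$, and identify $(e\cdot)$ with the dual of $(\cdot e[-1])$; both routes yield the same equivalence $(f\cdot)\text{ surjective}\iff(e\cdot)=0\iff(\cdot g)\text{ surjective}$.
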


\begin{proof}
We have an exact sequence $\T(T,Y)\xrightarrow{f\cdot}\T(T,X)\xrightarrow{e\cdot}\T(T,Z[1])\xrightarrow{g[1]\cdot}\T(T,Y[1])$, where the right map is isomorphic to the dual of $(\cdot g):\T(Y,\tau T)\to\T(Z,\tau T)$ by Serre duality.
Thus both conditions are equivalent to that the map $(e\cdot):\T(T,X)\to\T(T,Z[1])$ being zero.
\end{proof}

\begin{lemma}\label{2 triangles exist G}
For each $X\in\T$, there exist triangles
\begin{equation}\label{2 triangles G}
\Omega_X\xrightarrow{g} F_X\xrightarrow{f} X\to\Omega_X[1]\ \mbox{ and }\ \Omega^2_X\xrightarrow{g'}F'_X\xrightarrow{f'}\Omega_X\to X[1]
\end{equation}
with minimal right $\F$-approximations $f$, $f'$ and (not necessarily minimal) left $\F$-approximations $g$, $g'$ such that $\Omega^2_X=X\langle-2\rangle$ in $\overline{\T}$.
\end{lemma}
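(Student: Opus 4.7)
The plan is in three movements: construct the two triangles using the functorial finiteness of $\F$, deduce the left-approximation property of $g$ and $g'$ for free from Auslander's defect formula (Proposition \ref{defect formula}) together with the hypothesis $\tau\F=\F$, and finally identify $\Omega^2_X$ with $X\langle -2\rangle$ in $\overline{\T}$ through the construction of the triangulated structure on the reduction.

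Since $\F$ is functorially finite in $\T$, there exists a minimal right $\F$-approximation $f\colon F_X\to X$; extend it to a triangle
\[\Omega_X\xrightarrow{g}F_X\xrightarrow{f}X\to\Omega_X[1].\]
I claim $g$ is automatically a left $\F$-approximation. For any $T\in\F$, the assumption that $f$ is a right $\F$-approximation says that $(f\cdot)\colon\T(T,F_X)\to\T(T,X)$ is surjective, which by Proposition \ref{defect formula} is equivalent to surjectivity of $(\cdot g)\colon\T(F_X,\tau T)\to\T(\Omega_X,\tau T)$. Since $\tau\F=\F$ by condition (a), $\tau T$ ranges over $\F$ as $T$ does, giving surjectivity of $\T(F_X,T')\to\T(\Omega_X,T')$ for every $T'\in\F$ --- which is precisely the definition of $g$ being a left $\F$-approximation. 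Applying the same procedure to $\Omega_X$ in place of $X$ produces the second triangle with $f'$ a minimal right $\F$-approximation and $g'$ a left $\F$-approximation.

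For the identification $\Omega^2_X=X\langle -2\rangle$ in $\overline{\T}$, recall that in the Iyama-Yoshino/Jorgensen construction of the reduction $\overline{\T}=\T/[\F]$, the suspension $\langle 1\rangle$ is defined so that a triangle in $\T$ of the form $A\to F\to B\to A[1]$ with $F\in\F$ and middle map a right $\F$-approximation descends to a distinguished triangle in $\overline{\T}$. Applied to the triangle of the previous step, $F_X$ becomes zero in $\overline{\T}$ and we obtain an isomorphism $\overline{\Omega_X}\simeq\overline{X}\langle -1\rangle$. Iterating once with the second triangle yields $\overline{\Omega^2_X}\simeq\overline{\Omega_X}\langle -1\rangle\simeq\overline{X}\langle -2\rangle$, as required.

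The only substantive step is the appeal to Proposition \ref{defect formula}: it is the hypothesis $\tau\F=\F$ that makes the left- and right-approximation conditions interchangeable, so that no extra construction is needed to upgrade $g$ to a left $\F$-approximation. The remainder is a routine unpacking of the definition of the reduction.
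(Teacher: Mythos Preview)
Your proof is correct and follows essentially the same approach as the paper: construct the triangles from minimal right $\F$-approximations, invoke Proposition \ref{defect formula} together with $\tau\F=\F$ to obtain the left-approximation property of $g,g'$, and identify $\Omega^2_X$ with $X\langle-2\rangle$ via the definition of the suspension in the reduction $\overline{\T}$. The paper's proof is terser but the logic is identical.
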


\begin{proof}
Take triangles
\[\Omega_X\xrightarrow{g} F_X\xrightarrow{f} X\to\Omega_X[1]\ \mbox{ and }\ \Omega^2_X\xrightarrow{g'}F'_X\xrightarrow{f'}\Omega_X\to X[1]\]
with minimal right $\F$-approximations $f$, $f'$. Then $\Omega_X=X\langle-1\rangle$ and $\Omega^2_X=X\langle-2\rangle$ holds in $\overline{\T}$. Since $\tau\F=\F$ holds by our assumption (a), Proposition \ref{defect formula} implies that $g$ and $g'$ are left $\F$-approximations.
\end{proof}

Let $\overline{A}:=\End_{\overline{\M}}(M)$ be a factor algebra of $A$. For $X\in\M$, let
\begin{align*}
P_X:=\M(M,X)\in\proj A,&\ \ \ I_X:=D\M(X,M)\simeq\T(M,\nu X)\in\inj A,\\
\overline{P}_X:=\overline{\M}(M,X)\in\proj\overline{A},&\ \ \ \overline{I}_X:=D\overline{\M}(X,M)\in\inj\overline{A}.
\end{align*}

\begin{example}\label{eg:sec3 approx seq}
We continue with Examples \ref{eg:sec3 setup} and \ref{eg:sec3 loewy} where $\T$ comes from simple singularity of type $A_6$ and $(\M=\add\{1,2,3,6\},\F=\add\{1,3,6\})$.
For the triangles in Lemma \ref{2 triangles exist G}, since $X=2\in\M$ is the unique indecomposable object not in $\F$, we only need to consider its associated triangles; otherwise, the sequences are just $0\to Y\to Y\to 0$ for any $Y\in\F$.
In this case, the two sequences are isomorphic:
\[
\xymatrix@R=3pt@C=35pt{
\ \phantom{=}(\Omega_X\ar[r]^{g} & F_X\ar[r]^{f} & X\ar[r] & \Omega_X[1]) \\
=(\phantom{ab}2 \ar[r]^{(b_1,a_2)} & 1\oplus 3\ar[r]^(.6){(a_1,-b_2)^{\mathrm{T}}} & 2\ar[r] & 5\phantom{abc}) \\
=(\Omega^2_X\ar[r]^{g'} & F'_X\ar[r]^{f'} & \Omega_X\ar[r] & \Omega^2_X[1])
}
\]
\end{example}

\begin{lemma}\label{2 exact sequences G}
The following assertions hold.
\begin{enumerate}[\rm(1)]
\item $X\mapsto\Omega^2_X$ gives a permutation of $\ind\M\setminus\ind\F$.
\item For each $X\in\M$, the triangles \eqref{2 triangles G} give exact sequences
\begin{align*}
0\to\overline{P}_{\tau X}\to\T(M,\nu\Omega_X)\xrightarrow{\nu g\cdot} I_{F_X}\xrightarrow{I_f} I_X\to0,\\
0\to\overline{I}_{\Omega^2_X}\to I_{\Omega^2_X}\xrightarrow{I_{g'}} I_{F'_X}\xrightarrow{\nu f'\cdot} \T(M,\nu\Omega_X)\to0.
\end{align*}
\end{enumerate}
\end{lemma}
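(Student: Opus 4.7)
For part (1), I would use Lemma \ref{2 triangles exist G}, which identifies $\Omega^2_X=X\langle-2\rangle$ in $\overline{\T}$; by axiom (b), $\langle-2\rangle$ restricts to an auto-equivalence of $\overline{\M}$, so it permutes $\ind\overline{\M}=\ind\M\setminus\ind\F$.

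For the first exact sequence in (2), I apply $\T(M,-)$ to the $\nu$-shift of the first triangle and use $\nu A[-1]=\tau A$ together with Serre duality. The resulting long exact sequence contains the segment
\begin{equation*}
\T(M,\tau F_X)\to\T(M,\tau X)\to\T(M,\nu\Omega_X)\xrightarrow{\nu g\cdot}I_{F_X}\xrightarrow{I_f}I_X\xrightarrow{\nu e\cdot}\T(M,\nu\Omega_X[1]).
\end{equation*}
Since $\tau\M=\M$ (axiom (a)), $\T(M,\tau X)=P_{\tau X}$. The kernel of $\nu g\cdot$ is the image of the connecting map $P_{\tau X}\to\T(M,\nu\Omega_X)$, whose kernel (by exactness) is the image of $\T(M,\tau F_X)\to P_{\tau X}$. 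Because $\tau$ is an auto-equivalence preserving $\F$, $\tau f$ is again a right $\F$-approximation, so any morphism $M\to\tau X$ factoring through some object of $\F$ factors through $\tau f$; this identifies the image with the morphisms through $\F$, and the quotient becomes $\overline{P}_{\tau X}$.

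The main obstacle is surjectivity of $I_f$, equivalently $\nu e\cdot=0$. Via Serre duality this is equivalent to surjectivity of $(\cdot g):\T(F_X,M[-1])\to\T(\Omega_X,M[-1])$. Applying the defect formula (Proposition \ref{defect formula}) with $T=\nu^{-1}M$ (so $\tau T=M[-1]$) reduces this further to surjectivity of $(f\cdot):\T(\nu^{-1}M,F_X)\to\T(\nu^{-1}M,X)$. Now $\nu^{-1}M\in\nu^{-1}\M=\nu\M$ by axiom (a) (using $\M[-1]=\M[1]=\nu\M$), and $X\in\M$, so axiom (c1) forces every morphism $\nu^{-1}M\to X$ to factor through $\F$, and hence through $f$ since $f$ is a right $\F$-approximation.

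For the second sequence I proceed symmetrically. The kernel $\ker(I_{g'})=\overline{I}_{\Omega^2_X}$ is obtained by dualising $(\cdot g'):\T(F'_X,M)\to\T(\Omega^2_X,M)$: as $g'$ is a left $\F$-approximation, $\mathrm{im}(\cdot g')$ is exactly the morphisms $\Omega^2_X\to M$ factoring through $\F$, so $\mathrm{coker}(\cdot g')=\overline{\M}(\Omega^2_X,M)$, whose $D$-dual sits inside $I_{\Omega^2_X}$ as $\overline{I}_{\Omega^2_X}$. Surjectivity of $\nu f'\cdot$ reduces in the same way to surjectivity of $(\cdot g'):\T(F'_X,M[-1])\to\T(\Omega^2_X,M[-1])$; here part (1) gives $\Omega^2_X\in\M$ and $M[-1]\in\nu\M$, so axiom (c2) lets every morphism $\Omega^2_X\to M[-1]$ factor through $\F$ and hence through $g'$ by its left $\F$-approximation property.
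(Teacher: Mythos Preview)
Your argument is correct and follows essentially the same route as the paper. For part (1) and for identifying the kernel terms $\overline{P}_{\tau X}$ and $\overline{I}_{\Omega^2_X}$ your reasoning matches the paper exactly. For the surjectivity of $I_f$ you take an unnecessary detour: you pass from $I_f$ to $(\cdot g)$ on $\T(-,M[-1])$ via Serre duality, then back to $(f\cdot)$ on $\T(\nu^{-1}M,-)$ via the defect formula, whereas the paper simply notes that $I_f=\nu f\cdot$ is isomorphic to $(f\cdot):\T(\nu^{-1}M,F_X)\to\T(\nu^{-1}M,X)$ because $\nu$ is an autoequivalence, and then invokes (c1) directly. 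Your two steps compose to this identification, so nothing is lost---it is just less economical. For the second sequence your use of (c2) to obtain surjectivity of $(\cdot g')$ on $\T(-,M[-1])$, followed by the defect formula, is precisely the paper's argument.
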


\begin{proof}
(1) This is clear since the map gives the autoequivalence $\langle-2\rangle:\overline{\M}\to\overline{\M}$

(2)(i) Applying $\T(M,\nu-)$ to the left triangle in \eqref{2 triangles G}, we have an exact sequence
\[\T(M,\tau F_X)\xrightarrow{\tau f\cdot}\T(M,\tau X)\to\T(M,\nu \Omega_X)\xrightarrow{\nu g\cdot}\T(M,\nu F_X)\xrightarrow{\nu f\cdot}\T(M,\nu X).\]
The right-most map is can be written as $I_f:I_{F_X}\to I_X$. It is isomorphic to $(f\cdot):\T(\nu^{-1}M,F_X)\to\T(\nu^{-1}M,X)$ and hence surjective by $\nu^{-1}\M=\nu\M$ and our assumption (C1).
The left-most map can be written as $P_{\tau F_X}\to P_{\tau X}$, whose cokernel is $\overline{P}_{\tau X}$ since $\tau\F=\F$ and hence $\tau f$ is a right $\F$-approximation. Thus we obtain the first sequence.

(ii) Applying $\T(-,M[-1])$ to the right triangle in \eqref{2 triangles G} and using $\M[-1]=\nu\M$ and our assumption (c2), we have a surjection $(\cdot g'):\T(F'_X,M[-1])\to\T(\Omega^2_X,M[-1])$.
By Proposition \ref{defect formula}, $(f'\cdot):\T(\nu^{-1}M,F'_X)\to\T(\nu^{-1}M,\Omega_X)$ is surjective. Thus applying $\T(M,\nu-)$ to the right triangle in \eqref{2 triangles G} gives an exact sequence
\[\T(M,\nu\Omega^2_X)\xrightarrow{\nu g'\cdot}\T(M,\nu F'_X)\xrightarrow{\nu f'\cdot}\T(M,\nu\Omega_X)\to0.\]
The left map can be written as $I_{g'}:I_{\Omega^2_X}\to I_{F'_X}$, and its dual $\T(F'_X,M)\to\T(\Omega^2_X,M)$ has a cokernel $\overline{\T}(\Omega^2_X,M)$ since $g'$ is a left $\F$-approximation. Thus the kernel of the left map is $\overline{I}_{\Omega^2_X}$, and we obtain the second sequence.
\end{proof}


Combining two exact sequences above, we get a commutative diagram of exact sequences.
\begin{equation}\label{PB0 G}
\xymatrix@R=1.5em{
&0&0\\
0\ar[r]&\overline{P}_{\tau X}\ar[r]\ar[u]& \T(M,\nu\Omega_X)\ar[r]^(.6){\nu g\cdot}\ar[u]& I_{F_X}\ar[r]^{I_f}& I_X\ar[r]& 0\\
0\ar[r]&U_X\ar[r]\ar[u]&I_{F'_X}\ar[r]^{I_{gf'}}\ar[u]^{\nu f'\cdot}& I_{F_X}\ar[r]^{I_f}\ar@{=}[u]& I_X\ar[r]\ar@{=}[u]& 0\\
&I_{\Omega^2_X}\ar@{=}[r]\ar[u]&I_{\Omega^2_X}\ar[u]^{I_{g'}}\\
&\overline{I}_{\Omega^2_X}\ar[u]\ar@{=}[r]&\overline{I}_{\Omega^2_X}\ar[u]\\
&0\ar[u]&0.\ar[u]
}\end{equation}
In particular, we obtain exact sequences
\begin{align}\label{UIII sequence G}
&0\to U_X\to I_{F'_X}\xrightarrow{I_{gf'}} I_{F_X}\xrightarrow{I_f} I_X\to0,&\\ \label{IIUP sequence G}
&0\to \overline{I}_{\Omega^2_X}\to I_{\Omega^2_X}\to U_X\to\overline{P}_{\tau X}\to0.&
\end{align}

\begin{example}\label{eg:sec3 4-term seq}
In the setting of Examples \ref{eg:sec3 setup}, \ref{eg:sec3 loewy} and \ref{eg:sec3 approx seq}, the quotient algebra $\overline{A}\simeq A/A(e_1+e_3+e_6)A$ is simple and embeds in $\mod A$ as a simple module $\overline{A}_A = S_2$. Also we have
\[
\overline{P}_{\tau 2} = 2 = \overline{I}_{\Omega_2^2}, \;\;\T(M,\nu\Omega_2) = \T(M,5)=I_2=\sm{
\ssp\ssp\ssp\ssp\gr{5}\ssp\\ \ssp\ssp\ssp\gr{4}\ssp6\\ \ssp\ssp3\ssp\gr{5}\ssp\\ \ssp2\ssp\gr{4}\ssp\ssp\\ 1\ssp3\ssp\ssp\ssp\\ \ssp2\ssp\ssp\ssp\ssp} \quad\text{and}\quad U_2 =\sm{
\ssp\ssp\ssp\ssp\gr{5}\ssp\\ \ssp\ssp\ssp\gr{4}\ssp6\\ \ssp\ssp3\ssp\gr{5}\ssp\\ \ssp22\gr{4}\ssp\ssp\\ 1\ssp3\ssp\ssp\ssp\\ \ssp\gr{2}\ssp\ssp\ssp\ssp},
\]
which means that the sequence \eqref{UIII sequence G} takes the following form
\[
0\to \sm{\ssp\ssp\ssp\ssp\gr{5}\ssp\\ \ssp\ssp\ssp\gr{4}\ssp6\\ \ssp\ssp3\ssp\gr{5}\ssp\\ \ssp22\gr{4}\ssp\ssp\\ 1\ssp3\ssp\ssp\ssp\\ \ssp\gr{2}\ssp\ssp\ssp\ssp}\to 
 \sm{\ssp\ssp\ssp\gr{4}\ssp\ssp\\ \ssp\ssp3\ssp\gr{5}\ssp\\ \ssp2\ssp\gr{4}\ssp6\\ 1\ssp3\ssp\gr{5}\ssp\\ \ssp2\ssp\gr{4}\ssp\ssp\\ \ssp\ssp3\ssp\ssp\ssp} \oplus \sm{6\\ \gr{5} \\\gr{4} \\3\\2\\1} 
\to \sm{\ssp\ssp\ssp\gr{4}\ssp\ssp\\ \ssp\ssp3\ssp\gr{5}\ssp\\ \ssp2\ssp\gr{4}\ssp6\\ 1\ssp3\ssp\gr{5}\ssp\\ \ssp2\ssp\gr{4}\ssp\ssp\\ \ssp\ssp3\ssp\ssp\ssp} \oplus \sm{6\\ \gr{5} \\\gr{4} \\3\\2\\1} 
\to \sm{
\ssp\ssp\ssp\ssp\gr{5}\ssp\\ \ssp\ssp\ssp\gr{4}\ssp6\\ \ssp\ssp3\ssp\gr{5}\ssp\\ \ssp2\ssp\gr{4}\ssp\ssp\\ 1\ssp3\ssp\ssp\ssp\\ \ssp2\ssp\ssp\ssp\ssp} \to 0.
\]
\end{example}

Now let
\[I_{\F}:=\bigoplus_{X\in\ind\F}I_X,\ \ \ U_{\F^{\c}}:=\bigoplus_{X\in\ind\M\setminus\ind\F}U_X\ \mbox{ and }\ U:=I_{\F}\oplus U_{\F^{\c}}.\]
Now $U$ can be interpreted as the second simultaneous mutation of the cotilting $A$-module $DA$ with respect to the direct summand $I_{\F}$ of $DA$.


\begin{proposition}\label{U is cotilting G}
Under the assumptions (a),(b) and (c), the following assertions hold.
\begin{enumerate}[\rm(1)]
\item The $A$-module $U$ is cotilting and given by $U=\mu^{+2}_{I_{\F}}(DA)$, where $\mu^+_{I_{\F}}$ is defined in \eqref{mu^+_V}.
\item If $\F=\M$, then $\idim U=0$. Otherwise $\idim U=2$.
\end{enumerate}
\end{proposition}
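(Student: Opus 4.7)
The plan is to identify $U$ with the iterated mutation $\mu^{+2}_{I_\F}(DA)$, which will immediately yield that $U$ is cotilting since, by the definition \eqref{mu^+_V}, each mutation step with a surjective approximation map produces a cotilting module. Throughout I would use the identification $\Hom_A(I_Y, I_Z) \simeq \M(Y, Z)$ for $Y, Z \in \M$, stemming from the fully faithful embedding $\T(M, \nu -)|_\M \hookrightarrow \mod A$.

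For the first mutation, decompose $DA = I_\F \oplus \bigoplus_{X \in \ind \M \setminus \ind \F} I_X$. I would argue that $I_f\colon I_{F_X} \to I_X$ is a surjective right $(\add I_\F)$-approximation: surjectivity is read off the first exact sequence of Lemma \ref{2 exact sequences G}(2), while the approximation property reduces through the above identification to $f\colon F_X \to X$ being a right $\F$-approximation. Hence $\mu^+_{I_\F}(DA) = I_\F \oplus \bigoplus_X K_X$ with $K_X := \ker I_f$. For the second mutation, the sequence \eqref{UIII sequence G} shows that $I_{F'_X} \to K_X$ (induced by $I_{gf'}$) is surjective with kernel $U_X$, so only the approximation property is left to verify. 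Given $\phi \in \Hom_A(I_Y, K_X)$ with $Y \in \F$, postcomposing with $K_X \hookrightarrow I_{F_X}$ produces $\phi' \in \M(Y, F_X)$ satisfying $f \circ \phi' = 0$; the triangle $\Omega_X \xrightarrow{g} F_X \xrightarrow{f} X$ then lets me factor $\phi'$ through $g$ as some $\phi'' \in \T(Y, \Omega_X)$, and because $f'\colon F'_X \to \Omega_X$ is a right $\F$-approximation and $Y \in \F$, $\phi''$ further factors through $f'$, yielding the needed lift to $I_{F'_X}$. Therefore $\mu^{+2}_{I_\F}(DA) = I_\F \oplus \bigoplus_X U_X = U$, giving (1).

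For (2), if $\F = \M$ then $U = I_\F = DA$ is injective, so $\idim U = 0$. If $\F \ne \M$, pick any $X \in \ind\M \setminus \ind\F$; the sequence \eqref{UIII sequence G} then gives a length-two injective resolution of $U_X$, hence $\idim U \le 2$. For the matching lower bound I would compute $\Ext^2_A(I_X, U_X) = \operatorname{coker}(\M(X, F_X) \xrightarrow{f \cdot} \M(X, X))$. Since $X$ is indecomposable and does not lie in $\F$, the identity $\mathrm{id}_X$ cannot factor through $f$ (else $X$ would be a summand of $F_X \in \F$), so this cokernel is nonzero and $\idim U = 2$.

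The main obstacle I anticipate is verifying the second-mutation approximation property in the second step; the remainder mostly repackages data already supplied by Lemma \ref{2 exact sequences G} and the diagram \eqref{PB0 G}. Conditions (c1) and (c2) enter silently through those earlier results, while condition (d) is not required here, consistent with the author's remark that (d) is needed only for the later two-sided $\Ext$-maximality statements.
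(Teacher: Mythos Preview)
Your proposal is correct and follows essentially the same route as the paper. The paper packages both approximation checks into a single statement---that applying $\Hom_A(I_\F,-)$ to \eqref{UIII sequence G} preserves exactness---and verifies this by transporting the exact sequence $\T(\F,F'_X)\to\T(\F,F_X)\to\T(\F,X)\to0$ (which holds because $f$ and $f'$ are right $\F$-approximations) through Yoneda and the Nakayama functor $\nu:\proj A\simeq\inj A$. Your two-step argument is exactly this unpacked: your identification $\Hom_A(I_Y,I_Z)\simeq\M(Y,Z)$ is the Nakayama transfer, and your lifting through the triangle $\Omega_X\to F_X\to X$ followed by the approximation $f'$ is precisely how one checks exactness at $\T(\F,F_X)$.

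For part (2), your computation $\Ext^2_A(I_X,U_X)\simeq\operatorname{coker}\big(\M(X,F_X)\xrightarrow{f\cdot}\M(X,X)\big)\neq 0$ is correct and supplies the lower bound $\idim U\ge 2$ that the paper's proof does not spell out explicitly; this is a welcome clarification.
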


\begin{proof}
It suffices to show that the sequence obtained by applying $\Hom_A(I_{\F},-)$ to \eqref{UIII sequence G} is still exact.
By our construction, the sequence $\T(\F,F'_X)\to\T(\F,F_X)\to\T(\F,X)\to0$ is exact.
Thus the sequence
\[\Hom_A(P_{\F},P_{F'_X})\to\Hom_A(P_{\F},P_{F_X})\to\Hom_A(P_{\F},P_X)\to0\]
is also exact, where $P_{\F}:=\bigoplus_{X\in\ind\F}P_X$. Since the Nakayama functor $\nu:\proj A\simeq\inj A$ is an equivalence, the sequence
\[\Hom_A(I_{\F},I_{F'_X})\to\Hom_A(I_{\F},I_{F_X})\to\Hom_A(I_{\F},I_X)\to0\]
is exact, as desired.
\end{proof}

Now we prove that $\End_A(U)$ is isomorphic to $A$. We need the following preparations.

\begin{lemma}\label{2 triangles exist2 G}
For $X\in\M$, take triangles \eqref{2 triangles G} for $\nu X$:
\begin{equation}\label{2 triangles G 2}
\Omega_{\nu X}\xrightarrow{g} F_{\nu X}\xrightarrow{f}\nu X\to \Omega_{\nu X}[1]\ \mbox{ and }\ \Omega^2_{\nu X}\xrightarrow{g'}F'_{\nu X}\xrightarrow{f'}\Omega_{\nu X}\to \Omega^2_{\nu X}[1].
\end{equation}
Then $f$ and $f'$ are minimal right $\M$-approximations and $g$ and $g'$ are (not necessarily minimal) left $\M$-approximations. 
\end{lemma}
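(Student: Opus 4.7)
The plan is to apply Lemma \ref{2 triangles exist G} with $\nu X$ in place of $X$, which immediately produces $f$ and $f'$ as minimal right $\mathcal{F}$-approximations and $g,g'$ as left $\mathcal{F}$-approximations. The remaining content is to promote $\mathcal{F}$ to $\mathcal{M}$ in all four cases, which I would do by showing that the relevant obstruction cokernels vanish in $\overline{\mathcal{T}}$ using the axioms (b), (c1), (c2), and by invoking Proposition \ref{defect formula} together with $\tau\mathcal{M}=\mathcal{M}$ from (a) to transport the conclusions for $f,f'$ to $g,g'$.

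For $f$, I would compute, for any $N\in\mathcal{M}$, the cokernel of $\mathcal{T}(N,F_{\nu X})\to\mathcal{T}(N,\nu X)$. Because $f$ is a right $\mathcal{F}$-approximation and $\mathcal{F}$ is closed under direct sums, a map $N\to\nu X$ factors through some object of $\mathcal{F}$ if and only if it factors through $f$, so this cokernel is precisely $\overline{\mathcal{T}}(\overline{N},\overline{\nu X})$. Since $\overline{N}\in\overline{\mathcal{M}}$ and $\overline{\nu X}\in\overline{\nu\mathcal{M}}$, condition (c2) forces it to be zero, hence $f$ is a right $\mathcal{M}$-approximation. Proposition \ref{defect formula} combined with $\tau\mathcal{M}=\mathcal{M}$ then gives that $g$ is a left $\mathcal{M}$-approximation.

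The case of $f'$ is subtler, because the direct analogue of the previous argument would require $\overline{\mathcal{T}}(\overline{\mathcal{M}},\overline{\Omega_{\nu X}})=0$, yet $\overline{\Omega_{\nu X}}=\overline{\nu X}\langle-1\rangle$ does not obviously lie in $\overline{\nu\mathcal{M}}$, so (c2) is not immediately applicable. I would therefore argue dually via $g'$. For $N\in\mathcal{M}$, the left $\mathcal{F}$-approximation property of $g'$ identifies $\operatorname{coker}\bigl(\mathcal{T}(F'_{\nu X},N)\to\mathcal{T}(\Omega^2_{\nu X},N)\bigr)$ with $\overline{\mathcal{T}}(\overline{\Omega^2_{\nu X}},\overline{N})$. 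By Lemma \ref{2 triangles exist G}, $\overline{\Omega^2_{\nu X}}=\overline{\nu X}\langle-2\rangle$, and condition (b) gives $\overline{\nu\mathcal{M}}\langle-2\rangle=\overline{\nu\mathcal{M}}$, so $\overline{\Omega^2_{\nu X}}\in\overline{\nu\mathcal{M}}$. Condition (c1) then kills this cokernel, so $g'$ is a left $\mathcal{M}$-approximation, and Proposition \ref{defect formula} promotes this to $f'$ being a right $\mathcal{M}$-approximation.

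Minimality is automatic: $f$ and $f'$ are right minimal morphisms (being minimal right $\mathcal{F}$-approximations), and any right $\mathcal{M}$-approximation which is also right minimal is isomorphic to the minimal right $\mathcal{M}$-approximation. The main obstacle I anticipate is precisely the asymmetry highlighted above -- recognising that the argument for $f$ does not directly repeat for $f'$, and then finding the correct dual route where the extra shift in $\Omega^2_{\nu X}$, combined with condition (b), brings us back into $\overline{\nu\mathcal{M}}$ so that (c1) rather than (c2) becomes applicable.
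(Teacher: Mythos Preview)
Your proposal is correct and follows essentially the same route as the paper: use (c2) to upgrade $f$ from an $\F$-approximation to an $\M$-approximation, use (b) together with (c1) to do the same for $g'$ via the identification $\overline{\Omega^2_{\nu X}}\in\overline{\nu\M}\langle-2\rangle=\overline{\nu\M}$, and then invoke Proposition \ref{defect formula} with $\tau\M=\M$ to obtain the statements for $g$ and $f'$. Your explicit remark on minimality and your diagnosis of the asymmetry between the $f$ and $f'$ cases are exactly the points the paper handles (more tersely) in the same way.
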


\begin{proof}
$f$ is a minimal right $\M$-approximation by our assumption (c2). We have $\nu X\in\nu\M$ and $\Omega^2_{\nu X}\in\overline{\nu\M}\langle-2\rangle=\overline{\nu\M}$ by our assumption (b). Thus $g'$ is a left $\M$-approximation by our assumption (c1). Since $\tau\M=\M$ holds by our assumption (a), Proposition \ref{defect formula} implies that $g$ is a left $\M$-approximations, and $f'$ is a minimal right $\M$-approximation.
\end{proof}

We obtain the following vanishing properties.

\begin{lemma}\label{vanish G}
The following assertions hold.
\begin{enumerate}[\rm(1)]
\item We have $\Ext^i_A(\mod\overline{A},U)=0$ for $i=0,1$.
\item We have $\Ext^i_A(DA,\mod\overline{A})=0$ for $i=0,1$.
\item For each $X\in\M$, the triangles \eqref{2 triangles G 2} give an exact sequence $P_{F'_{\nu X}}\to P_{F_{\nu X}}\xrightarrow{f\cdot} I_X\to0$ with projective cover $(f\cdot)$.
\end{enumerate}
\end{lemma}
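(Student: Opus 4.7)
The plan is to prove (3) directly from the approximation triangles, deduce (2) from (3), and handle (1) via socle-vanishing using the sequence \eqref{UIII sequence G}. The key underlying observation is: for $V \in \mod\overline{A}$ and any $Y \in \ind\F$, the identity $\mathrm{id}_Y$ factors through $\F$, so the primitive idempotent $e_Y$ lies in $\ker(A \to \overline{A})$; hence $V e_Y = 0$, giving $\Hom_A(P_Y, V) \simeq V e_Y = 0$ and $\Hom_A(V, I_Y) \simeq D(V e_Y) = 0$.

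For (3), I apply $\T(M, -)$ to the two triangles in \eqref{2 triangles G 2}. By Lemma \ref{2 triangles exist2 G}, both $f$ and $f'$ are minimal right $\M$-approximations, so the induced maps $(f\cdot)\colon P_{F_{\nu X}} \to I_X$ and $(f'\cdot)\colon P_{F'_{\nu X}} \to \T(M, \Omega_{\nu X})$ are surjective; splicing through $\T(M, \Omega_{\nu X})$ via the long exact sequence associated to the first triangle yields the claimed exact sequence. Finally, the canonical isomorphism $\T(F_{\nu X}, \nu X) \simeq \Hom_A(P_{F_{\nu X}}, I_X)$, valid because $F_{\nu X} \in \M$, transfers right-minimality of $f$ in $\T$ to right-minimality of $(f\cdot)$ in $\mod A$, so $(f\cdot)$ is a projective cover.

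For (2), note that by the construction in Lemma \ref{2 triangles exist G}, both $F_{\nu X}$ and $F'_{\nu X}$ lie in $\F$. Hence the underlying observation gives $\Hom_A(P_{F_{\nu X}}, V) = 0 = \Hom_A(P_{F'_{\nu X}}, V)$. Applying $\Hom_A(-, V)$ to the projective presentation from (3) and writing $K := \ker(f\cdot)$ (which is a quotient of $P_{F'_{\nu X}}$), dimension shifting gives $\Hom_A(I_X, V) = 0$ and $\Ext^1_A(I_X, V) \simeq \Hom_A(K, V) \hookrightarrow \Hom_A(P_{F'_{\nu X}}, V) = 0$. Summing over $X \in \ind\M$ yields (2), since $DA = \bigoplus_{X \in \ind\M} I_X$.

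For (1), $\Ext^1_A(V, I_\F) = 0$ because $I_\F$ is injective, and $\Hom_A(V, I_\F) = 0$ by the underlying observation. For each $X \in \ind\M \setminus \ind\F$, the embedding $U_X \hookrightarrow I_{F'_X} \in \add I_\F$ from \eqref{UIII sequence G} yields $\Hom_A(V, U_X) = 0$. The short exact sequence $0 \to U_X \to I_{F'_X} \to I_{F'_X}/U_X \to 0$ with $I_{F'_X}$ injective gives $\Ext^1_A(V, U_X) \simeq \Hom_A(V, I_{F'_X}/U_X)$, and this vanishes because $I_{F'_X}/U_X$ embeds into $I_{F_X} \in \add I_\F$ by exactness of \eqref{UIII sequence G}. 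The main bookkeeping issue is tracking that the auxiliary objects $F_{\nu X}, F'_{\nu X}$ remain in $\F$ so that the idempotent annihilation applies; once this is in place, the proof reduces to formal consequences of injectivity and socle analysis.
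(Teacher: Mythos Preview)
Your proof is correct and follows essentially the same approach as the paper: both arguments build the projective presentation of $I_X$ from the $\M$-approximation triangles \eqref{2 triangles G 2}, use that $F_{\nu X},F'_{\nu X}\in\F$ to kill $\Hom_A(P_{-},\mod\overline{A})$ for part (2), and use the injective resolution \eqref{UIII sequence G} together with $\Hom_A(\mod\overline{A},I_{F_X}\oplus I_{F'_X})=0$ for part (1). Your write-up is more explicit than the paper's in two respects---you spell out the dimension-shifting for $\Ext^1$ in (1), and you justify the projective-cover claim in (3) via the Yoneda/Serre-duality identification $\T(F_{\nu X},\nu X)\simeq\Hom_A(P_{F_{\nu X}},I_X)$---but the underlying strategy is identical.
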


\begin{proof}
(1) It suffices to show that $\Ext^i_A(\mod\overline{A},U_X)=0$ for each $X\in\M$ and $i=0,1$.
We have an injective resolution \eqref{UIII sequence G} of $U_X$.
Since $\Hom_A(\mod\overline{A},I_{F'_X}\oplus I_{F_X})=0$ holds, we have the assertion.

(2)(3) It suffices to show that $\Ext^i_A(I_X,\mod\overline{A})=0$ for each $X\in\M$ and $i=0,1$.
Applying $\T(M,-)$ to the triangles in Lemma \ref{2 triangles exist2 G}, we obtain exact sequences
\[\T(M,\Omega_{\nu X})\to P_{F_{\nu X}}\to I_X\to0\ \mbox{ and }\ P_{F'_{\nu X}}\to\T(M,\Omega_{\nu X})\to0.\]
Combining them, we obtain a projective presentation of $P_{F'_{\nu X}}\to P_{F_{\nu X}}\xrightarrow{f\cdot} I_X\to0$ of $I_X$.
Since $\Hom_A(P_{F'_{\nu X}}\oplus P_{F_{\nu X}},\mod\overline{A})=0$ holds, we have the assertion.
\end{proof}

Now we are ready to prove the following result.

\begin{proposition}\label{End(U) G}
Under the assumptions (a),(b) and (c), we have $\End_A(U)\simeq A$ as algebras.
\end{proposition}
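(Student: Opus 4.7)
The plan is to construct a natural algebra homomorphism $\Theta \colon A \to \End_A(U)$ and verify it is bijective by a componentwise analysis of $\Hom_A(U_Y, U_X)$ for indecomposable summands $U_X, U_Y$ of $U$.

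First, I would build $\Theta$ by observing that $U$ carries a natural $A$-bimodule structure. The summand $I_\F$ is a bimodule summand of $DA$. For each $X \in \ind\M \setminus \ind\F$, the module $U_X$ is the kernel in the sequence \eqref{UIII sequence G}, whose terms $I_{F'_X}, I_{F_X}, I_X$ are bimodule summands of $DA$ and whose differentials $I_{gf'_X}, I_{f_X}$ are bimodule maps, since they arise functorially from morphisms in $\T$ via the functor $\T(M, \nu-)$. Hence $U_X$ inherits a natural bimodule structure, giving the ring map $\Theta$; the canonical isomorphism $A \simeq \End_A(DA)$ then makes $\Theta$ the analogue for $U$.

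For the main computation, I exploit two dualities. Since $I_Z$ is injective, $\Ext^i_A(-, I_Z) = 0$ for $i > 0$; applying $\Hom_A(-, I_Z)$ to the injective coresolution of $U_Y$ and using the Nakayama identification $\Hom_A(I_W, I_Z) \simeq \M(W, Z)$ yields the exact sequence
\[
0 \to \M(Y, Z) \xrightarrow{f_Y^*} \M(F_Y, Z) \xrightarrow{(gf'_Y)^*} \M(F'_Y, Z) \to \Hom_A(U_Y, I_Z) \to 0.
\]
On the other hand, since $U$ is cotilting (Proposition \ref{U is cotilting G}), $\Ext^i_A(U_Y, U_X) = 0$ for $i > 0$, and combining this with the injectivity of $I_{F'_X}, I_{F_X}, I_X$ gives the exact sequence
\[
0 \to \Hom_A(U_Y, U_X) \to \Hom_A(U_Y, I_{F'_X}) \to \Hom_A(U_Y, I_{F_X}) \to \Hom_A(U_Y, I_X) \to 0.
\]
Splicing the two yields a $3 \times 4$ bicomplex of $\M$-Hom spaces, in which $\Hom_A(U_Y, U_X)$ appears as an iterated kernel.

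The last step is a diagram chase identifying this iterated kernel with $\M(Y, X)$. This uses the two approximation triangles \eqref{2 triangles G} for both $X$ and $Y$ together with conditions (b) and (c): the orthogonality $\overline{\T}(\overline{\nu\M}, \overline{\M}) = 0 = \overline{\T}(\overline{\M}, \overline{\nu\M})$ forces the potential obstruction classes in the bicomplex to vanish, while the 2-periodicity $\overline{\M}\langle 2\rangle = \overline{\M}$ and $\overline{\nu\M}\langle 2\rangle = \overline{\nu\M}$ ensures that the double syzygy $\Omega^2_X$ (which appears naturally in \eqref{IIUP sequence G}) "cancels" against $X$ in the relevant cohomology, leaving only the desired contribution. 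This gives a natural isomorphism $\Hom_A(U_Y, U_X) \simeq \M(Y, X)$ (which specializes, for $X, Y \in \ind\F$, to the classical identification $\Hom_A(I_Y, I_X) \simeq \M(Y, X)$), and summing over indecomposables yields $\End_A(U) \simeq \End_\M(M) \simeq A$ compatibly with the algebra structure by naturality. The hard part will be this bicomplex diagram chase: correctly tracking the interplay between the two approximation triangles on the $X$-side and the $Y$-side, and using the orthogonality and 2-periodicity conditions to collapse the bicomplex to its expected homology.
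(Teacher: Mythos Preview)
Your route diverges sharply from the paper's, and the divergence is instructive. The paper does not attempt a componentwise computation via the coresolution \eqref{UIII sequence G} at all. Instead it uses the \emph{other} four-term sequence \eqref{IIUP sequence G}: summing over $X\in\ind\M\setminus\ind\F$ and adding the identity on $I_\F$ produces a single exact sequence
\[
0\to \overline{I}_{\F^{\c}}\to DA\xrightarrow{\,b\,}U\to\overline{P}_{\F^{\c}}\to0
\]
whose outer terms lie in $\mod\overline{A}$. Lemma~\ref{vanish G} supplies $\Ext^i_A(\mod\overline{A},U)=0=\Ext^i_A(DA,\mod\overline{A})$ for $i=0,1$, so applying $\Hom_A(-,U)$ and $\Hom_A(DA,-)$ to this sequence yields isomorphisms
\[
\End_A(U)\xrightarrow{\ \cdot b\ }\Hom_A(DA,U)\xleftarrow{\ b\cdot\ }\End_A(DA)=A,
\]
and the composite is visibly an algebra map because $b$ is a fixed bimodule morphism. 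No bicomplex, no chase, and conditions (b),(c) enter only through the earlier construction of \eqref{IIUP sequence G} and Lemma~\ref{vanish G}.

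Your proposal, by contrast, has a real gap at exactly the step you flag as hard. In your $3\times 4$ bicomplex the rows are exact, but the three left-hand columns $\M(W,F'_X)\to\M(W,F_X)\to\M(W,X)$ are \emph{not} short exact: they come from applying $\T(W,-)$ to the composite of two triangles, so their cohomology involves contributions from $\T(W,\Omega_X^2)$ and $\T(W,X[-1])$ that must be identified and controlled. Saying that ``orthogonality forces the obstruction classes to vanish'' and ``2-periodicity makes $\Omega^2_X$ cancel against $X$'' is not an argument: $\Omega^2_X$ agrees with $X$ only in $\overline{\T}$, not in $\T$, so there is no cancellation at the level of $\M(-,-)$. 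Even if you extract a dimension equality $\dim\Hom_A(U_Y,U_X)=\dim\M(Y,X)$, you still owe a proof that the resulting bijection coincides with your $\Theta$; ``by naturality'' does not suffice when the identification passes through choices of approximations. The paper's single map $b$ solves both problems at once: it gives the isomorphism and carries the algebra structure automatically.
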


\begin{proof}
By Lemma \ref{2 exact sequences G}(1), the sequence \eqref{IIUP sequence G} gives an exact sequence
\begin{equation*}
0\to \overline{I}_{\F^{\c}}\to I_{\F^{\c}}\xrightarrow{a}U_{\F^{\c}}\to\overline{P}_{\F^{\c}}\to0,\end{equation*}
where 
$\overline{I}_{\F^{\c}}:=\bigoplus_{X\in\ind\M\setminus\ind\F}\overline{I}_X$ $I_{\F^{\c}}:=\bigoplus_{X\in\ind\M\setminus\ind\F} I_X$, $P_{\F^{\c}}:=\bigoplus_{X\in\ind\M\setminus\ind\F}P_X$ and $\overline{P}_{\F^{\c}}:=\bigoplus_{X\in\ind\M\setminus\ind\F}\overline{P}_X$.
For $b:=1_{I_{\F}}\oplus a:DA=I_{\F}\oplus I_{\F^{\c}}\to I_{\F}\oplus U_{\F^{\c}}=U$, we have an exact sequence
\begin{equation}\label{PB I_m U_m0 G}
0\to \overline{I}_{\F^{\c}}\to DA\xrightarrow{b}U\to\overline{P}_{\F^{\c}}\to0.
\end{equation}
As Lemma \ref{vanish G}(1) asserts that  $\Ext_A^i(\overline{I}_{\F^{\c}}\oplus\overline{P}_{\F^{\c}},U)=0$ holds for $i=0,1$, by applying $\Hom_A(-,U)$ to \eqref{PB I_m U_m0 G}, we have an isomorphism
\[(\cdot b):\End_A(U)\simeq\Hom_A(DA,U).\]
Since $\Ext^i_A(DA,\overline{I}_{\F^{\c}}\oplus\overline{P}_{\F^{\c}})=0$ holds for $i=0,1$ by Lemma \ref{vanish G}(2), by applying $\Hom_A(DA,-)$ to \eqref{PB I_m U_m0 G}, we have an isomorphism
\[(b\cdot):\End_A(DA)\simeq\Hom_A(DA,U).\]
Thus we get the desired algebra isomorphism
\[\End_A(U)\xrightarrow{\cdot b}\Hom_A(DA,U)\xrightarrow{(b\cdot)^{-1}}\End_A(DA)=A.\qedhere\]
\end{proof}

The assumptions (d) are necessary only in the results below.
For each $X\in\ind\M$, let
\[S_X:=\top P_X\]
be the corresponding simple $A$-module. Let
\[S_{\F}:=\bigoplus_{X\in\ind\F}S_X\ \mbox{ and }\ S_{\F^{\c}}:=\bigoplus_{X\in\ind\M\setminus\ind\F}S_X.\]
We need the following easy but technical observations.

\begin{lemma}\label{describe top G}
The following assertions hold.
\begin{enumerate}[\rm(1)]
\item We have $\top DA\in\add S_{\F}$.
\item For each $X\in\ind\M\setminus\ind\F$, we have $S_{\tau X}\in\add\top U_X\subset\add(S_{\tau X}\oplus S_{\F})$.
\item We have $S_{\F^{\c}}\in\add\top U_{\M\setminus \F}$.
\item If (d2) holds, then we have $\add S_{\F}=\add\top I_{\F}$.
\item If (d1) holds, then for each $X\in\ind\F$, we have $I_X\notin\Fac(I_{\F}/I_X)$.
\end{enumerate}
\end{lemma}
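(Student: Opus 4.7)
The plan is to treat the parts in order, using the exact sequences \eqref{UIII sequence G} and \eqref{IIUP sequence G} together with the projective-cover description $\top I_X=\top P_{F_{\nu X}}$ coming from Lemma \ref{vanish G}(3). For part (1), since $F_{\nu X}\in\F$ by construction, $\top I_X=\top P_{F_{\nu X}}\in\add S_\F$; summing over $X\in\ind\M$ gives $\top DA\in\add S_\F$. For part (2), I apply the right-exact functor $\top$ to \eqref{IIUP sequence G}, yielding the exact sequence
\[\top(I_{\Omega^2_X}/\overline I_{\Omega^2_X}) \to \top U_X \to \top \overline P_{\tau X} \to 0.\]
Since $X\notin\ind\F$ and $\tau\F=\F$ force $\tau X\notin\ind\F$, the module $\overline P_{\tau X}$ is a nonzero quotient of $P_{\tau X}$, so $\top\overline P_{\tau X}=S_{\tau X}$. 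The semisimple $\top U_X$ then surjects onto $S_{\tau X}$ with kernel lying in $\add S_\F$ (being a quotient of $\top I_{\Omega^2_X}\in\add S_\F$ by (1)); since $S_{\tau X}\notin\add S_\F$, the $S_{\tau X}$-summand occurs with multiplicity one, hence $\top U_X\subseteq\add(S_{\tau X}\oplus S_\F)$. Part (3) follows by summing (2) over $X\in\ind\M\setminus\ind\F$ and using that $\tau$ permutes this set.

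For (4), the inclusion $\add\top I_\F\subseteq\add S_\F$ is immediate from (1). For the reverse, I first observe that \eqref{UIII sequence G} gives a surjection $I_{F_X}\twoheadrightarrow I_X$ with $F_X\in\F$ for every $X\in\ind\M$, so $\add\top I_X\subseteq\add\top I_\F$ and hence $\add\top DA=\add\top I_\F$. Now given $Z\in\ind\F$, apply (d2) to the proper subcategory $\F':=\add(\ind\F\setminus\{Z\})\subsetneq\F$ to obtain $f\colon Y\to\nu Y'$ with $Y,Y'\in\ind\M$ and $f\notin[\F']$. By (c2), $f$ factors through the minimal right $\F$-approximation $F_{\nu Y'}\to\nu Y'$; were $Z$ not a summand of $F_{\nu Y'}$, then $F_{\nu Y'}\in\F'$, forcing $f\in[\F']$, a contradiction. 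Hence $S_Z\in\top I_{Y'}\subseteq\add\top I_\F$, which gives the reverse inclusion.

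For (5), I proceed by contradiction: suppose $I_X\in\Fac(I_\F/I_X)$ for some $X\in\ind\F$. Then there is a surjection $\bigoplus_{Y\in\ind\F\setminus\{X\}}I_Y^{n_Y}\twoheadrightarrow I_X$. Using the Nakayama equivalence $\nu\colon\proj A\simeq\inj A$ and the identification $\Hom_A(I_Y,I_X)\simeq\M(Y,X)$, this surjection corresponds to a morphism $\beta\colon\bigoplus_Y Y^{n_Y}\to X$ in $\M$ such that $\nu\beta\colon\bigoplus\nu Y^{n_Y}\to\nu X$ is a right $\M$-epimorphism, meaning every map $M'\to\nu X$ with $M'\in\M$ factors through $\nu\beta$. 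Now apply (d1) with $\F':=\add(\ind\F\setminus\{X\})$ to obtain $f\colon Y_1\to Y_2$ with $Y_1\in\ind\nu\M$, $Y_2\in\ind\M$, and $f\notin[\F']$. By (c1), $f$ factors through $\F$, so modulo $[\F']$ we may write $f\equiv g_X h_X$ for some $h_X\colon Y_1\to X^k$ and $g_X\colon X^k\to Y_2$. The crucial observation is that $\nu Y_1\in\nu^2\M=\M[2]=\M$; hence each component $\nu h_{X,i}\colon\nu Y_1\to\nu X$ factors through $\nu\beta$, and applying $\nu^{-1}$ shows that each $h_{X,i}$ factors through $\bigoplus_Y Y^{n_Y}\in\F'$. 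Consequently $g_X h_X$ factors through an object of $\F'$, forcing $f\in[\F']$, the desired contradiction. The main obstacle here is this last step: one must correctly reinterpret the $\Fac$-condition as an $\M$-epimorphism condition in $\T$ and then exploit $\M[2]=\M$ to apply $\nu^{-1}$ and pull the factorisation back to $\F'$.
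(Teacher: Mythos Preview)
Your proof is correct and follows essentially the same strategy as the paper's. The only notable difference is in part~(4): the paper argues that $\top I_\F=S_{F_{\nu F}}$ and then uses (c1) (together with $\nu^2\M=\M$) to show every morphism $\M\to\nu\M$ factors through $\add F_{\nu F}$, whence (d2) forces $\add F_{\nu F}=\F$; you instead first establish $\add\top DA=\add\top I_\F$ via the surjections $I_{F_X}\twoheadrightarrow I_X$, and then for each $Z\in\ind\F$ use (c2) and (d2) to locate some $Y'\in\M$ with $Z\in\add F_{\nu Y'}$, giving $S_Z\in\add\top I_{Y'}\subseteq\add\top I_\F$. Your route via (c2) is slightly more direct here. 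In part~(5) your argument is the same as the paper's (which takes a right $\F'$-approximation of $X$ and concludes that all morphisms $\M\to\nu\M$ factor through $\nu\F'$, contradicting (d1)), just unpacked more explicitly at the level of individual morphisms; the key identity $\nu^2\M=\M$ that you invoke is exactly what underlies the paper's passage between (c1), (d1) and statements about morphisms $\M\to\nu\M$.
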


\begin{proof}
(1) By Lemma \ref{vanish G}(2), we have $\Hom_A(DA,\mod\overline{A})=0$. Thus the assertion follows.

(2) We have an exact sequence $I_{\Omega^2_X}\to U_X\to\overline{P}_{\tau X}\to0$ in \eqref{IIUP sequence G} . Since $\top I_{\Omega^2_X}\in\add S_{\F}$ by (1) and $\top\overline{P}_{\tau X}=S_{\tau X}$, we have the assertion.

(3) Since $\tau$ gives a permutation of $\ind\M\setminus\ind\F$, the assertion follows from (2).

(4) Let $F:=\bigoplus_{X\in\ind\F}X$. By Lemma \ref{vanish G}(3), we obtain a projective cover $P_{F_{\nu F}}\to I_{\F}\to0$. Thus $\top I_{\F}\simeq S_{F_{\nu F}}$ holds.
On the other hand, by our assumption (c1), any morphism from $\M$ to $\nu\M$ factors through $\nu\F=\add\nu F$ and hence also factors through $\add F_{\nu F}$. By our assumption (d2), we have $\add F_{\nu F}=\F$. Thus $\add\top I_{\F}=\add S_{F_{\nu F}}=\add S_{\F}$ holds.

(5) Assume $I_X\in\Fac(I_{\F}/I_X)$, and let $\F':=\add(\ind\F\setminus\{X\})$. Take a right $\F'$-approximation $f:G\to X$. Applying $\T(M,\nu-)$, we obtain a morphism $I_G\to I_X$ which is a right $\add(I_{\F}/I_X)$-approximation and hence surjective by our assumption $I_X\in\Fac(I_{\F}/I_X)$.
Thus any morphism from $\M$ to $\nu X$ factors through $\nu\F'$. By (c1), any morphism from $\M$ to $\nu\M$ factors though $\nu\F'$, a contradiction to (d1). Thus $I_X\notin\Fac(I_{\F}/I_X)$ holds.
\end{proof}

We are ready to prove the following result.

\begin{proposition}\label{U is dualizing G}
Under the assumptions (a)--(d), $A$ is a Cohen-Macaulay algebra with dualizing module $U$.
\end{proposition}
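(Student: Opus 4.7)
By Proposition \ref{dualizing = 2sided Extmax+DCP}, the claim reduces to verifying three conditions on $U$: (i) $U$ is an $\Ext$-maximal cotilting right $A$-module; (ii) $U$ is an $\Ext$-maximal cotilting $\End_A(U)^{\op}$-module; (iii) $A\simeq\End_A(U)$ as $k$-algebras. Condition (iii) is exactly Proposition \ref{End(U) G}, and the cotilting part of (i) is Proposition \ref{U is cotilting G}(1). So it remains to establish the two $\Ext$-maximality statements.

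For condition (i), I apply Proposition \ref{CMcharapropo}(iii) and verify, for every indecomposable direct summand $X$ of $U=I_\F\oplus U_{\F^{\c}}$, that $X\notin\Fac(U/X)$. When $X=U_Z$ with $Z\in\ind\M\setminus\ind\F$, Lemma \ref{describe top G}(2) places $S_{\tau Z}$ in $\top U_Z$, while parts (1), (2), (4) of Lemma \ref{describe top G} together with the fact that $\tau$ permutes $\ind\M\setminus\ind\F$ (since $\tau\M=\M$ and $\tau\F=\F$) show that $\top(U/U_Z)\subseteq \add(S_\F\oplus\bigoplus_{Z'\neq Z}S_{\tau Z'})$ does not contain $S_{\tau Z}$, ruling out any surjection $(U/U_Z)^n\twoheadrightarrow U_Z$. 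When $X=I_Y$ with $Y\in\ind\F$, I assume a surjection $(I_\F/I_Y)^n\oplus U_{\F^{\c}}^n\twoheadrightarrow I_Y$ and use the embedding $U_Z\hookrightarrow I_{F'_Z}\in\add I_\F$ from \eqref{UIII sequence G} together with the injectivity of $I_Y$ to extend the $U_{\F^{\c}}^n$-component to a map whose domain lies in $\add I_\F$. A Yoneda-style identification of $\Hom_A(U_Z,I_Y)$ via the triangle $\Omega^2_Z\to F'_Z\to\Omega_Z\to\Omega^2_Z[1]$ of Lemma \ref{2 triangles exist G}, combined with axiom (c1), then allows me to remove any spurious $I_Y$-summand from the extension, yielding a surjection from a module in $\add(I_\F/I_Y)$ onto $I_Y$ and contradicting Lemma \ref{describe top G}(5).

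For condition (ii), I would argue by symmetry. The axioms (a)--(d) are self-dual under the swap $(\M,\F)\leftrightarrow(\nu\M,\nu\F)$: since $\tau$ commutes with $\nu$ and $\nu^2\M=\M$ in $\overline{\T}$, both (a) and (b) are preserved, while (c1) is interchanged with (c2) and (d1) with (d2). Repeating the construction of this section for the swapped setup yields the cotilting module realizing the $\End_A(U)^{\op}$-module structure on $U$, and the case analysis above transfers verbatim to give the required $\Ext$-maximality on the left.

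\textbf{Main obstacle.} The delicate case is $X=I_Y$ in condition (i): the injectivity extension of a morphism $U_{\F^{\c}}\to I_Y$ can a priori introduce an $I_Y$-summand inside $\bigoplus I_{F'_Z}$, and ruling this out cleanly requires the Yoneda description of $\Hom_A(U_Z,I_Y)$ together with the full strength of axioms (c1) and (d1) to force any morphism from $U_Z$ to $I_Y$ to factor through $I_\F/I_Y$. Once this technical step is handled, both $\Ext$-maximality statements follow uniformly from Proposition \ref{CMcharapropo} and Lemma \ref{describe top G}.
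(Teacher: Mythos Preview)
Your treatment of condition (i) matches the paper's: both apply Proposition~\ref{CMcharapropo}(iii) and verify $V\notin\Fac(U/V)$ for each indecomposable summand $V$, with the $U_Z$ case handled identically via Lemma~\ref{describe top G}(1)(2). For the $I_Y$ case the paper is terser than you are---it simply says that since every map $U_{\F^{\c}}\to I_Y$ factors through some $I\in\add I_\F$, one gets $I_Y\in\Fac(I_\F/I_Y)$, contradicting Lemma~\ref{describe top G}(5)---and your worry about a spurious $I_Y$-summand in the extension is a fair point of care.

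For condition (ii), however, you take an unnecessarily hard road and your argument has a genuine gap. The paper does \emph{not} argue by symmetry; it uses the other criterion in Proposition~\ref{CMcharapropo}, namely (ii)$\Rightarrow$(i). Via the isomorphism $\End_A(U)\simeq A$ and the double centralizer property of cotilting modules, Ext-maximality of $U$ as an $\End_A(U)^{\op}$-module reduces to checking that $\add\top(U_A)$ contains every simple right $A$-module. This is immediate from Lemma~\ref{describe top G}(3) (giving all $S_X$ with $X\in\ind\M\setminus\ind\F$) and Lemma~\ref{describe top G}(4) (giving all $S_X$ with $X\in\ind\F$, using only (d2)). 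That is the entire argument.

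Your proposed symmetry does not do what you claim. Under the swap $(\M,\F)\leftrightarrow(\nu\M,\nu\F)$ the axioms (c1)/(c2) and (d1)/(d2) are \emph{preserved}, not interchanged: transporting along the autoequivalence $\nu$ carries $\T/[\nu\F]$ back to $\T/[\F]$ and sends the new condition (c1), namely $(\T/[\nu\F])(\nu^2\M,\nu\M)=0$, to the old (c1). More seriously, repeating the construction for $(\nu\M,\nu\F)$ gives an algebra $A'=\End_\T(\nu M)$ isomorphic to $A$ via $\nu$, and under this identification the new cotilting module $U'$ pulls back to $U$ as a \emph{right} $A$-module again, not to the left $A$-module structure on $U$ coming from $\End_A(U)\simeq A$. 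To get the opposite side by duality one would have to pass to $\T^{\op}$ and re-verify the axioms there; this can be done, but it is considerably more work than the two-line argument via the top criterion that the paper uses.
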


\begin{proof}
By Propositions \ref{U is cotilting G} and \ref{End(U) G}, $U$ is a cotilting $A$-module with $\End_A(U)\simeq A$. It remains to show that $U$ is $\Ext$-maximal on both sides.

(i) By Lemma \ref{describe top G}(3)(4), $\add\top U$ contains all simple $A$-modules. By Proposition \ref{CMcharapropo}(ii)$\Rightarrow$(i), $U$ is an $\Ext$-maximal cotilting $A^{\op}$-module.

(ii) We now show that $U$ is an $\Ext$-maximal cotilting $A$-module.
By Proposition \ref{CMcharapropo}(iii)$\Rightarrow$(i), it suffices to show that, for each indecomposable direct summand $V$ of $U$ as an $A$-module, we have $V\notin\Fac(U/V)$.

For each $X\in\ind\M\setminus\ind\F$, we have $S_{\tau X}\in\top U_X$ and $S_{\tau X}\notin\top(U/U_X)$ by Lemma \ref{describe top G}(1)(2). Thus $U_X\notin\Fac(U/U_X)$ holds.

It remains to prove that, for each $X\in\ind\F$, $I_X\notin\Fac(U/I_X)$ holds.
In fact, the sequence \eqref{UIII sequence G} shows that we have an exact sequence $0\to U_{\F^{\c}}\to I$ with $I\in\add I_{\F}$. Thus, for each $X\in\ind\F$, any morphism $U_{\F^{\c}}\to I_X$ factors through $I\in\add I_{\F}$. Therefore, if $I_X\in\Fac(U/I_X)$ holds, then $I_X\in\Fac(I_{\F}/I_X)$ holds, a contradiction to Lemma \ref{describe top G}(5).
\end{proof}

\begin{proof}[Proof of Theorem \ref{main triangulated G}]
(1) The assertion follows from Proposition \ref{U is dualizing G}.

(2) Since $A$ is Cohen-Macaulay with dualizing module $U$, we have $\idim U=\fidim A$.  Now the claim is immediate from Proposition \ref{U is cotilting G}(2).

(3) If $\M=\nu\M$, then $A$ is selfinjective by Serre duality.
Assume $\M\neq\nu\M$ and $\F=\nu\F$. Then $I_{\F}$ is projective-injective. Thus $\domdim W\ge 2$ holds by the exact sequence \eqref{UIII sequence G}. The equality holds since $\M\neq\F$. Thus $\domdim A=2$ holds by Proposition \ref{syzygiesACM3}.
\end{proof}

\section{Contracted preprojective algebras of Dynkin type}\label{section 4}

\subsection{Main results}
In this subsection, we show that contracted preprojective algebras of Dynkin type are Cohen-Macaulay. Let us consider Dynkin diagrams:
\[\xymatrix@C0.4cm@R0.2cm{
A_n&1\ar@{-}[r]&2\ar@{-}[r]&3\ar@{-}[r]&\ar@{..}[rrr]&&&\ar@{-}[r]&n-2\ar@{-}[r]&n-1\ar@{-}[r]&n\\
&&2&&&&&&\\
D_n&1\ar@{-}[r]&3\ar@{-}[u]\ar@{-}[r]&4\ar@{-}[r]&\ar@{..}[rrr]&&&\ar@{-}[r]&n-1\ar@{-}[r]&n\\
&&&6\\
E_6&1\ar@{-}[r]&2\ar@{-}[r]&3\ar@{-}[r]\ar@{-}[u]&4\ar@{-}[r]&5\\
&&&&7\\
E_7&1\ar@{-}[r]&2\ar@{-}[r]&3\ar@{-}[r]&4\ar@{-}[r]\ar@{-}[u]&5\ar@{-}[r]&6\\
&&&&&8\\
E_8&1\ar@{-}[r]&2\ar@{-}[r]&3\ar@{-}[r]&4\ar@{-}[r]&5\ar@{-}[r]\ar@{-}[u]&6\ar@{-}[r]&7}\]
We define the canonical involution $\iota$ of each Dynkin diagram as follows:
\begin{itemize}
\item For $A_n$, we put $\iota(i)=n+1-i$.
\item For $D_n$ with odd $n$, we put $\iota(1)=2$, $\iota(2)=1$ and $\iota(i)=i$ for other $i$.
\item For $E_6$, we put $\iota(1)=5$, $\iota(2)=4$, $\iota(4)=2$, $\iota(5)=1$ and $\iota(i)=i$ for other $i$.
\item For other types, we put $\iota=1$.
\end{itemize}

\begin{definition}\label{define frozen}
Let $\Delta$ be a Dynkin diagram, and $\iota$ the canonical involution of $\Delta$. We fix an arbitrary subset $J$ of the set $\Delta_0$ of vertices. 
\begin{enumerate}[\rm(1)]
\item We call $i\in J$ \emph{frozen} if there exists $i'\in J$ and a sequence $i=i_0,\ldots,i_\ell=\iota(i')$ of vertices in $\Delta$ with $\ell\ge0$ such that $i_j$ and $i_{j+1}$ are connected by an edge for each $0\le j\le\ell-1$, and each $i_j$ with $1\le j\le\ell$ does not belong to $J$.
\item We call $i\in J$ \emph{mutable} if it is not frozen.
\item Let $J_\m$ the subset of $J$ of mutable elements, and $J_\f:=J\setminus J_\m$.
\end{enumerate}
\end{definition}

Let $\Pi$ be the preprojective algebra of Dynkin type $\Delta$. It is well-known that $\Pi$ is a selfinjective algebra with Nakayama permutation $\iota$, that is, $D(\Pi e_i)\simeq e_{\iota(i)}\Pi$ as $\Pi$-modules for each $i\in\Delta_0$, see for example \cite{G} for a modern proof.

\begin{theorem}\label{main ePie 1}
Let $\Pi$ be the preprojective algebra of Dynkin type $\Delta$. For a subset $J$ of $\Delta_0$, let $e:=\sum_{i\in J}e_i\in\Pi$ and $A:=e\Pi e$.
\begin{enumerate}[\rm(1)]
\item $A$ is a Cohen-Macaulay algebra with dualizing module $\mu_{e_\f(DA)}^{+2}(DA)$, where $e_\f:=\sum_{i\in J_\f}e_i$ and $\mu^+_{e_\f(DA)}$ is defined in \eqref{mu^+_V}.
\item If $J=J_\f$, then $\fidim A=0$. Otherwise, $\fidim A=2$.
\item If $J=\iota(J)$, then $A$ is selfinjective. If $J\neq\iota(J)$ and $J_\f=\iota(J_\f)$, then $\domdim A=2$.
\end{enumerate}
\end{theorem}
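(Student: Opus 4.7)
The plan is to apply Theorem~\ref{main triangulated G} to the triple $(\T,\M,\F)$ where $\T := \underline{\CM} R$ with $R = k[[x_0,x_1,x_2]]/(f_\Delta^2)$ the simple surface singularity of Dynkin type $\Delta$, and $\M := \add M_J$, $\F := \add M_{J_\f}$. Classical results of Auslander together with Kn\"orrer periodicity make $\T$ a Hom-finite Krull-Schmidt $1$-Calabi-Yau triangulated category with $[2]=\mathrm{id}$, whose indecomposables $\{M_i\}_{i\in\Delta_0}$ are indexed by the vertices of $\Delta$ via the 2-dimensional McKay correspondence. The identification $\underline{\End}_R(\bigoplus_{i\in\Delta_0}M_i)\simeq \Pi$ transports the Nakayama permutation of $\Pi$ to $\T$, giving $\nu M_i = M_{\iota(i)}$; in particular $\End_\M(M_J)\simeq e\Pi e = A$. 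Once the axioms of Definition~\ref{axiom T,M,F} are verified for this triple, Theorem~\ref{main triangulated G}(1)--(3) translate directly to the three parts of the statement via the equivalences $\M=\F \Leftrightarrow J=J_\f$, $\M=\nu\M \Leftrightarrow J=\iota(J)$, and $\F=\nu\F \Leftrightarrow J_\f=\iota(J_\f)$, together with the dualizing module identification $I_\F = \bigoplus_{i\in J_\f}D(e_iA) = e_\f(DA)$.

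Axiom~(a) is essentially free: $[2]=\mathrm{id}$ forces $\tau=\mathrm{id}$, so $\tau\M=\M$, $\tau\F=\F$ and $\M[2]=\M$ all hold, and functorial finiteness of $\M$ and $\F$ is automatic since $\T$ has only finitely many indecomposables. For axioms~(c) and~(d), I would translate into the preprojective algebra. Using $\T(M_i,M_j)\simeq e_j\Pi e_i$, the reduction becomes
\[
\overline{\T}(\overline{M_{\iota(i)}},\overline{M_j}) \;\simeq\; e_j\bigl(\Pi/\Pi e_\f\Pi\bigr)e_{\iota(i)} \qquad (i,j\in J),
\]
and the combinatorial definition of ``frozen'' is tailored exactly to this situation: for a mutable vertex $i\in J_\m$, the obstruction to any walk in the double quiver $\overline{\Delta}$ from $i$ to an $\iota$-partner of $J$ escaping the support $J$ forces every morphism $M_{\iota(i)}\to M_j$ to factor through $\F$, yielding (c); conversely, each frozen $i\in J_\f$ comes equipped by definition with an explicit escape walk yielding a non-zero morphism that obstructs (c) as soon as one passes to a proper subcategory $\F'\subsetneq\F$, yielding (d). The formalization uses self-injectivity of $\Pi$ with Nakayama permutation $\iota$ (together with the Frobenius pairing to get~(d2)/(c2) from (d1)/(c1)), and can be carried out by a uniform argument or case by case across the Dynkin types.

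The main obstacle will be axiom~(b), which asks that $\overline{\M}=\overline{\M}\langle 2\rangle$ and $\overline{\nu\M}=\overline{\nu\M}\langle 2\rangle$ in $\overline{\T}=\T/[\F]$. Although $[2]=\mathrm{id}$ in $\T$, the Iyama-Yoshino reduction shift $\langle 2\rangle$ is defined via iterated minimal $\F$-approximations and is a priori unrelated to $[2]$. I would compute, for each $M_i$ with $i\in J_\m$, the minimal right $\F$-approximation $F_{M_i}\to M_i$ directly from the AR quiver $\overline{\Delta}$ of $\T$, take the cone $\Omega_{M_i}$ using the mesh relations, and iterate to obtain $\Omega^2_{M_i}=M_i\langle -2\rangle$. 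A case-by-case check (or a uniform argument via the Nakayama automorphism of $\Pi$) then shows that $\Omega^2_{M_i}\in\overline{\M}$ and in fact that $M_i\mapsto\Omega^2_{M_i}$ permutes $\ind\M\setminus\ind\F$; the parallel assertion $\overline{\nu\M}\langle 2\rangle=\overline{\nu\M}$ follows by applying the same analysis to $\nu M_i=M_{\iota(i)}$. Once (b) is established, Theorem~\ref{main triangulated G} applies and the three parts of the theorem follow.
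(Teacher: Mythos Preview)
Your overall strategy is the same as the paper's: apply Theorem~\ref{main triangulated G} to the triple $(\T,\M,\F)=(\underline{\CM}R,\add\{X_i\mid i\in J\},\add\{X_i\mid i\in J_\f\})$, verify axioms (a)--(d), and read off the conclusions. Your treatment of (a), (c), (d) matches the paper's (the paper packages the combinatorics for (c) into a separate lemma stating that every walk from $J$ to $\iota(J)$ must pass through $J_\f$, which is exactly what you describe).

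The substantive difference is in axiom~(b), which you flag as ``the main obstacle'' and propose to handle by direct computation of approximation triangles, possibly case by case. The paper has a one-line argument you are missing: the reduction $\overline{\T}=\T/[\F]$ is equivalent to $\proj(\Pi/\langle e_\f\rangle)$, and $\Pi/\langle e_\f\rangle$ is the preprojective algebra of $\Delta\setminus J_\f$, a disjoint union of Dynkin diagrams. Hence $\overline{\T}$ is again a product of stable categories of maximal Cohen--Macaulay modules over simple surface singularities, and the \emph{same} structural reason that gives $[2]=1$ in $\T$ gives $\langle 2\rangle=1$ in $\overline{\T}$. Axiom~(b) is then immediate for any $\M$ and any $\F$. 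Your case-by-case approach would eventually succeed, but it obscures the point and requires separate work for $\overline{\nu\M}$; the paper's observation handles both halves of (b) at once and shows that (b) is not an obstacle at all.
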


We need the following easy combinatorial observation, which says that any path between $i\in J$ and $\iota(i')\in\iota(J)$ must pass through a frozen node.

\begin{lemma}\label{enough frozen}
Let $\Delta$ be a Dynkin diagram, $J$ a subset of $\Delta_0$, and $i,i'\in J$.
Let $i=i_0,\ldots,i_\ell=\iota(i')$ be a sequence of vertices in $\Delta$ with $\ell\ge0$ such that $i_j$ and $i_{j+1}$ are connected by an edge for each $0\le j\le\ell-1$.
Then some of $i_0,\ldots,i_{\ell}$ belongs to $J_\f$, and also some of $i_0,\ldots,i_{\ell}$ belongs to $\iota(J_\f)$.
\end{lemma}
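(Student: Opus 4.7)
The plan is to exploit Definition \ref{define frozen}(1) directly by finding a suitable sub-path inside the given path $i_0,\ldots,i_\ell$. For the first assertion, I would let $k$ be the largest index in $\{0,1,\ldots,\ell\}$ with $i_k\in J$; such a $k$ exists because $i_0=i\in J$. By maximality, the vertices $i_{k+1},\ldots,i_\ell$ all lie outside $J$, so the sub-path
\[i_k,\,i_{k+1},\,\ldots,\,i_\ell=\iota(i')\]
is exactly of the form required in Definition \ref{define frozen}(1), with $i'\in J$ serving as the witness. Hence $i_k\in J_\f$, and $i_k$ lies on the original path. The boundary case $k=\ell$ is not a problem: there $i_\ell=\iota(i')$ already belongs to $J\cap\iota(J)$, and the trivial length-zero sub-path certifies that $i_\ell\in J_\f$.

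For the second assertion, I would reduce to the first one by using that $\iota$ is an automorphism of the underlying Dynkin diagram (this is immediate from the case-by-case description: reflection of $A_n$, the swap $1\leftrightarrow 2$ for odd $D_n$, the standard nontrivial automorphism of $E_6$, and the identity otherwise). Applying $\iota$ to the reversed sequence gives
\[\iota(i_\ell),\,\iota(i_{\ell-1}),\,\ldots,\,\iota(i_0),\]
which is another path in $\Delta$ starting at $\iota(i_\ell)=\iota(\iota(i'))=i'\in J$ and ending at $\iota(i_0)=\iota(i)$. Since $i,i'\in J$, the first assertion applied to this new path and the pair $(i',i)$ yields an index $s$ such that $\iota(i_{\ell-s})\in J_\f$; equivalently, $i_{\ell-s}\in\iota(J_\f)$ and lies on the original path.

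The argument is essentially bookkeeping, so I do not anticipate any real obstacle; the only point needing care is the interplay with the boundary case $k=\ell$ (settled above) and the fact that $\iota$ must respect adjacency in $\Delta$, which is built into the definition.
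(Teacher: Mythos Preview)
Your argument is correct and essentially identical to the paper's: both take the maximal index $k$ with $i_k\in J$ and use the tail sub-path to certify $i_k\in J_\f$ via Definition \ref{define frozen}(1). For the second assertion the paper instead takes the minimal $j$ with $i_j\in\iota(J)$ and directly checks that $\iota(i_j)$ is frozen using the sequence $\iota(i_j),\ldots,\iota(i_0)$; your reduction (reverse the path, apply $\iota$, then invoke the first assertion) is the same idea packaged slightly more cleanly.
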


\begin{proof}
Take maximal $0\le j\le\ell$ such that $i_j$ belongs to $J$. Since $i_j$ is the unique vertex in the sequence $i_j,i_{j+1},\ldots,i_\ell$ which belongs to $J$, it is frozen.

Similarly, take minimal $0\le j\le\ell$ such that $i_j$ belongs to $\iota(J)$. Looking at the sequence $\iota(i_j),\iota(i_{j-1}),\ldots,\iota(i_0)$, it follows that $\iota(i_j)$ is frozen.
\end{proof}

\begin{proof}[Proof of Theorem \ref{main ePie 1}]
Let $R$ be a simple singularity of type $\Delta$ in Krull dimension two, and let $\T:=\underline{\CM}R$ be the stable category of Cohen-Macaulay $R$-modules. Then it is well-known that $\T$ is equivalent to $\proj\Pi$ as an additive category, see for example \cite[Proposition 5.8]{AIR} and references therein. For each $i\in\Delta_0$, we denote by $X_i\in\ind\T$ the corresponding object. 
Then the canonical involution $\iota$ of $\Delta$ describes the suspension functor $[1]$ of $\T$. Let
\[\T\supset\M:=\add\{X_i\mid i\in J\}\supset\F:=\add\{X_i\mid i\in J_\f\}.\]
It suffices to show that the conditions (a)--(d) in Theorem \ref{main triangulated G} are satisfied.

(a) This is roughly explained in Example \ref{eg:sec3 setup}; we give some more details here. Since $R$ is a 2-dimensional Gorenstein isolated singularity, Auslander-Reiten duality implies that $\T$ is 1-Calabi-Yau \cite{Aus1,Y,LW}, that is, $\tau=1$.  Since $R$ is hypersurface, it is basic in the theory of matrix factorizations that $\T$ satisfies $[2]=1$ \cite{E,Y,LW}.  All subcategories of $\T$ are functorially finite.

(b) Let $e_f:=\sum_{i\in J_\f}e_i$. Then $\overline{\Pi}:=\Pi/(e_f)$ is a preprojective algebra of $\Delta\setminus J_\f$, which is a disjoint union of Dynkin diagrams. Moreover $\overline{\T}$ is equivalent to $\proj\overline{\Pi}$. By the same reason as in (a), the suspension functor of $\overline{\T}$ satisfies $\langle2\rangle=1$.

(c) We only need to consider a morphism $f:X_i\to X_{i'}[1]:$ with $i,i'\in J$ given by a path $i=i_0\to i_1\to\cdots\to i_\ell=\iota(i')$ in the double of $\Delta$. By Lemma \ref{enough frozen}, some of $i_0,\ldots,i_\ell$ belongs to $J_\f$, and some of $i_0,\ldots,i_\ell$ belongs to $\iota(J_\f)$.
Thus $f$ factors through $\F$ and also factors through $\F[1]$.

(d) We only prove (d2) since the proof of (d1) is similar.

Assume that there exists $i\in J_\f$ such that each morphism $X\to Y[1]$ with $X,Y\in\M$ factors through $\F':=\add(\ind\F\setminus\{X_i\})$. Take a sequence $i=i_0,\ldots,i_\ell=\iota(i')$ as in Definition \ref{define frozen}(1).

If $i=\iota(i')$, then there exists an isomorphism $X_i\simeq X_{i'}[1]$, which does not factor through $\F'$, a contradiction.
Assume $i\neq\iota(i')$. Without loss of generality, we can assume that the vertices $i_0,\ldots,i_\ell$ are pairwise distinct. Since each $i_j$ with $0\le j\le\ell$ does not belong to $J\setminus\{i\}$, the morphism $X_i\to X_{i'}[1]$ corresponding to the path $i_0\to i_1\to\cdots\to i_\ell$ does not factor through $\F'$, a contradiction.
\end{proof}

As an immediate consequence, we obtain the following result.

\begin{theorem} \label{ARdominantdimension2criterion}
Let $R=k[[x_0,x_1,...,x_d]]/(f_\Delta^{d})$ be a simple singularity with an algebraically closed field $k$ of characteristic 0. For each maximal Cohen-Macaulay $R$-module $M$, the stable endomorphism ring of $\underline{\End}_R(M)$ is a Cohen-Macaulay algebra in the sense of Definition \ref{define CM}.
\end{theorem}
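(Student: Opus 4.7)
The plan is to combine Theorem \ref{contracted} with Kn\"orrer periodicity and the Calabi-Yau property of $\underline{\CM}R$, as indicated in the paragraph before the statement.

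First I would handle the dimension-two case. For $R = k[[x_0,x_1,x_2]]/(f_\Delta^2)$, the classical Auslander-McKay correspondence (already invoked in the proof of Theorem \ref{main ePie 1}) yields an additive equivalence $\underline{\CM}R \simeq \proj\Pi$, where $\Pi = \Pi(\Delta)$ is the preprojective algebra of Dynkin type $\Delta$. Under this equivalence, any MCM $R$-module $M$ (with its free summands stripped off, since they vanish in the stable category) corresponds to a projective $\Pi$-module of the form $e\Pi$ for some idempotent $e \in \Pi$, so that $\underline{\End}_R(M) \cong \End_\Pi(e\Pi) \cong e\Pi e$ is a contracted preprojective algebra of Dynkin type, hence Cohen-Macaulay by Theorem \ref{main ePie 1}.

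Next I would reduce the general case to $d \in \{1,2\}$ by iterated Kn\"orrer periodicity. The defining polynomials satisfy $f_\Delta^{d+2} = f_\Delta^d + x_{d+1}^2 + x_{d+2}^2$, so Kn\"orrer's theorem provides a triangle equivalence $\underline{\CM}R_\Delta^d \simeq \underline{\CM}R_\Delta^{d+2}$, and iterating yields an additive equivalence $\underline{\CM}R_\Delta^d \simeq \underline{\CM}R_\Delta^\delta$ with $\delta \in \{1,2\}$ of the same parity as $d$. Any additive equivalence preserves stable endomorphism rings, so if $M \in \CM R_\Delta^d$ corresponds to $M' \in \CM R_\Delta^\delta$, then $\underline{\End}_{R_\Delta^d}(M) \cong \underline{\End}_{R_\Delta^\delta}(M')$ as $k$-algebras. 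For even $d$ this reduces the claim to the case above.

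It remains to treat $d = 1$, and here I would invoke the Calabi-Yau property. Since $R_\Delta^1$ is a $1$-dimensional Gorenstein isolated singularity, $\underline{\CM}R_\Delta^1$ is $0$-Calabi-Yau, meaning its Serre functor is the identity. Setting $X = Y = M$ in Serre duality then gives a natural isomorphism $D\underline{\End}_R(M) \cong \underline{\End}_R(M)$ of bimodules, so $\underline{\End}_R(M)$ is a finite-dimensional symmetric $k$-algebra. Symmetric algebras are self-injective, hence Iwanaga-Gorenstein with selfinjective dimension zero, and in particular Cohen-Macaulay (with dualizing bimodule the algebra itself). The most delicate ingredient of the whole argument is the dimension-two identification $\underline{\End}_R(M) \simeq e\Pi e$, which is what makes Theorem \ref{main ePie 1} applicable; Kn\"orrer periodicity and the $0$-Calabi-Yau structure on the stable category of an ADE curve singularity are classical.
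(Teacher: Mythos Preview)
Your proof is correct and follows essentially the same strategy as the paper: split by parity of $d$, reduce the even case to $e\Pi e$ via Kn\"orrer periodicity and the identification $\underline{\CM}R\simeq\proj\Pi$ in dimension~2, and show the stable endomorphism algebra is symmetric in the odd case. The only minor difference is in the odd case: you first reduce to $d=1$ by Kn\"orrer and then invoke $0$-Calabi--Yau directly, whereas the paper works in the original odd dimension $d$, using that $\underline{\CM}R$ is $(d-1)$-Calabi--Yau together with $[2]=1$ (matrix factorizations) to conclude the Serre functor is the identity; both routes lead to the same ``symmetric, hence Cohen--Macaulay'' conclusion.
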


\begin{proof}
Let $d:=\dim R$ and $E:=\underline{\End}_R(M)$. We divide into 2 cases.

(i) Assume that $d$ is even. Then the resulting stable endomorphism ring is of the form $e\Pi e$ for  a preprojective algebra $\Pi$ of Dynkin type. This is well known for Krull dimension two (see for example \cite[Proposition 5.8]{AIR}) and is a consequence of Kn\"orrer periodicity for general even Krull dimension. Now the assertion follows from Theorem \ref{main ePie 1}.

(ii) Assume that $d$ is odd. Then the Serre functor of the stable category $\underline{\CM} R$ is given by the identity. In fact, since $R$ has an isolated singularity, $[d-1]$ gives a Serre functor (that is, $\underline{\CM} R$ is $(d-1)$-Calabi-Yau) by Auslander-Reiten duality. Since $R$ is a hypersurface, $[2]=1$ by matrix factorization \cite{E} (see also \cite{Y}). Thus $[d-1]=1$ gives a Serre functor. In particular, $E\simeq DE$ holds as $E$-bimodules, and hence $E$ is a symmetric algebra. Thus it is Cohen-Macaulay.

Alternatively, one can show that $E$ is a symmetric algebra by using explicit presentations written in \cite[Theorem 5.12]{Sko}
\end{proof}

\subsection{Homological dimensions}

In this subsection, we calculate homological dimensions of contracted preprojective algebras of Dynkin type. We then apply our results to answer the Question \ref{AR conjecture} of Auslander-Reiten negatively.

Let $\Pi=\Pi(\Delta)$ be a preprojective algebra of Dynkin type. 
For a non-empty subset $J\subset\Delta_0$, let $e=e_J:=\sum_{i\in J}e_i$ and $A=\Pi(\Delta,J):=e\Pi e$ the contracted preprojective algebra.

\begin{definition}
We call a non-empty subset $J$ of $\Delta_0$ \emph{impartial} if it does \emph{not} satisfy any of the following.
\begin{enumerate}
\item $\Delta=A_n$, and $J\subset[1,\frac{n+1}{2})$ or $J\subset(\frac{n+1}{2},n]$.
\item $\Delta=D_n$ with odd $n$,and $J=\{1\}$ or $\{2\}$.
\item $\Delta=E_6$, and $J=\{1\}$ or $\{5\}$.
\end{enumerate}
\end{definition}

To study contracted preprojective algebras, we can always assume $J$ is impartial. In fact, in case (1) above, $J\subset[1,\frac{n+1}{2})$, then for $m:=2\max J-1$, $J$ is an impartial subset of $[1,m]$ and we have $\Pi(A_n,J)=\Pi(A_m,J)$. In case (2) and (3), we know $A$ explicitly: It is $k[x]/(x^{\frac{n-1}{2}})$ for (2) and $k[x]/(x^2)$ for (3), and therefore $A$ is selfinjective and non-semisimple.

Our main result below gives explicit values of three homological dimensions of contracted preprojective algebras, that is, dominant dimension, selfinjective dimension and global dimension.
Note that $\domdim A=\domdim A^{\op}$ and $\gldim A=\gldim A^{\op}$ hold in general.  Also $\idim A_A=\idim{}_AA$ holds for Cohen-Macaulay algebras by \eqref{4 dimension} and \cite[Proposition 6.10]{Ar1}, and so we can simply denote it by $\idim A$.
Notice that the assumption that $J$ is impartial simplifies the statement.

\begin{theorem}\label{dimension for type A}
Let $\Pi$ be the preprojective algebra of Dynkin type $\Delta$, $J\subset\Delta_0$ an impartial subset, and $A:=\Pi(\Delta,J)$. Then the following assertions hold.
\begin{enumerate}[\rm(1)]
\item Assume that $\Delta$ is $D_{2n}$ with $n\ge2$ or $E_n$ with $n=7,8$. Then $A$ is always selfinjective and non-semisimple. 
\item Assume $\Delta=A_n$. Then the following assertions hold.
\begin{enumerate}[\rm(i)]
\item If $J=\iota(J)$, then $\domdim A=\infty$. If $J\neq\iota(J)$ and $J_\f=\iota(J_\f)$, then $\domdim A=2$. In all remaining cases, $\domdim A=0$.
\item If $J=\iota(J)$, then $\idim A=0$. If $J\neq\iota(J)$ and $J$ is contained in $[1,\frac{n+1}{2}]$ or $[\frac{n+1}{2},n]$, then $\idim A=2$. In all remaining cases, $\idim A=\infty$.
\item If $n=1$, then $\gldim A=0$. If $n>1$ and $J$ is $[1,\frac{n+1}{2}]$ or $[\frac{n+1}{2},n]$, then $\gldim A=2$. In all remaining cases, $\gldim A=\infty$.
\end{enumerate}
\item Assume $\Delta=D_{2n+1}$ with $n\ge2$.
\begin{enumerate}[\rm(i)]
\item If $\#(\{1,2\}\cap J)$ is 0 or 2, then $\domdim A=\infty$. If $3\in J$ and $\#(\{1,2\}\cap J)=1$, then $\domdim A=2$. In all remaining cases, $\domdim A=0$.
\item If $\#(\{1,2\}\cap J)$ is 0 or 2, then $A$ is selfinjective. Otherwise, $\idim A=\infty$.
\item $\gldim A=\infty$ always holds.
\end{enumerate}
\item Assume $\Delta=E_6$. 
\begin{enumerate}[\rm(i)]
\item If $J=\iota(J)$, then $\domdim A=\infty$. If $J\neq\iota(J)$ and $J_\f=\iota(J_\f)$, then $\domdim A=2$. In all remaining cases, $\domdim A=0$.
\item If $J=\iota(J)$, then $A$ is selfinjective. Otherwise, $\idim A=\infty$.
\item $\gldim A=\infty$ always holds.
\end{enumerate}
\end{enumerate}
\end{theorem}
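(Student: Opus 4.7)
My plan is to establish Theorem \ref{dimension for type A} type by type, using Theorem \ref{main ePie 1} as the primary structural input and filling in the remaining cases by explicit computation. Part (1) is immediate: for $\Delta \in \{D_{2n}, E_7, E_8\}$ the canonical involution $\iota$ is the identity on $\Delta_0$, so every subset $J$ satisfies $J = \iota(J)$, and Theorem \ref{main ePie 1}(3) yields that $A$ is selfinjective. Non-semisimplicity follows since $\dim_k e_i \Pi e_i \geq 2$ in each of these types (any edge $a$ incident to $i$ contributes the nonzero cycle $aa^*$).

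For Parts (2)--(4), the three subparts can be handled in parallel for each type. For subpart (i) on dominant dimension, the values $\domdim A \in \{2, \infty\}$ are exactly those covered by Theorem \ref{main ePie 1}(3), after translating the conditions $J = \iota(J)$ and $J_\f = \iota(J_\f)$ via the explicit involution of each type: symmetry about $(n+1)/2$ for $A_n$; the condition $\#(\{1,2\} \cap J) \in \{0, 2\}$ for $D_{2n+1}$ (since $\iota$ swaps $1,2$ and fixes the rest); and the swap $1 \leftrightarrow 5$, $2 \leftrightarrow 4$ (with $3, 6$ fixed) for $E_6$. In the remaining cases I would establish $\domdim A = 0$ by exhibiting an indecomposable projective summand $e_i A$ of $A$ whose socle is a simple $S_j$ that is not the socle of any projective-injective $A$-module; the failure of $J_\f = \iota(J_\f)$ furnishes such a vertex $i$, and the verification reduces to a path-counting argument in $\Pi$ combined with the criterion for projective-injectivity extracted from Proposition \ref{APTmaintheorem}.

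For subpart (ii) on injective dimension, Theorem \ref{main ePie 1}(2) gives $\fidim A \in \{0, 2\}$, so $\idim A \in \{0, 2, \infty\}$ and finiteness is equivalent to Iwanaga-Gorensteinness. The selfinjective case $\idim A = 0$ corresponds to $J = \iota(J)$. For $A_n$ with $J \subseteq [1, (n+1)/2]$ or its mirror (and analogous conditions for the other types), I construct an explicit injective coresolution of $A$ of length at most $2$ using the Nakayama-like uniserial structure of the indecomposable projectives of $\Pi(A_n)$; when $J$ is not contained in one such half, I produce a specific periodic cosyzygy, yielding $\idim A = \infty$. Subpart (iii) on global dimension is then a direct consequence: $\gldim A = \idim A$ when finite, so only cases from subpart (ii)'s finite range are candidates. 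For $A_n$ with $J = [1, (n+1)/2]$ or $[(n+1)/2, n]$ a direct computation of projective resolutions shows all simple $A$-modules have $\pdim \leq 2$; in the other finite-$\idim$ cases the algebra $A$ is selfinjective and non-semisimple by the same reasoning as in Part (1), forcing $\gldim A = \infty$.

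The main technical obstacle is proving $\idim A = \infty$ in the negative case of subpart (ii). My plan is a periodicity argument: isolate a non-injective submodule $M$ appearing in a cosyzygy of $A$ such that some $\Omega^{-k} M$ is isomorphic to $M$ (possibly up to a Nakayama twist) for $k \geq 1$, forcing the minimal injective coresolution of $M$ to repeat indefinitely. In type $A_n$ with $J$ spanning both strict halves of $[1, n]$, the natural candidate $M$ arises from a vertex $i \in J$ with $\iota(i) \notin J$, and the period is driven by the round-trip induced by the Nakayama automorphism of $\Pi$; the combinatorial key is that any path witnessing this crosses the middle of the diagram, so the cosyzygy sequence cannot stabilise. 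For $D_{2n+1}$ and $E_6$, the construction localises near the branching vertex and reduces, after collapsing non-essential vertices, to a small type-$A$ sub-computation.
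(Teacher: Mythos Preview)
Your outline agrees with the paper's at the coarse level---both lean on Theorem~\ref{main ePie 1} for the cases $\domdim A\in\{2,\infty\}$ and for the bound $\fidim A\le 2$---but the methods diverge at the two genuinely hard points, and one of your proposed arguments has a gap.

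For type $A_n$, the paper does \emph{not} establish $\idim A=\infty$ by producing periodic cosyzygies. Instead it argues the contrapositive: assume $\idim A<\infty$, so that the dualizing module $U$ is projective, and then analyse $\top U_X$ for $X\in J_\m$ via the exact sequence \eqref{IIUP sequence G}. Comparing the two descriptions of the top of the image $L_i$ of $I_i\to P_i$ (once as the kernel of $P_i\to\overline P_i$, once as $I_i/\overline I_i$) forces $J_\f$ to lie entirely on one side of each mutable $i$, and eventually pins $J$ inside $[1,\tfrac{n+1}{2}]$ or its mirror. The key technical input is Proposition~\ref{converse yoneda}, a type-$A$ specific result characterising precisely when $\T(M,X_i)$ is projective or injective, proved via explicit approximation triangles and the path lemma~\ref{paths of PPA}; this also drives the $\domdim$ computation via the equivalence $\domdim A\ge 2\Leftrightarrow I_\F\in\proj A\Leftrightarrow J_\f=\iota(J_\f)$ (using Lemma~\ref{basic of J_f}). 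Your periodicity plan is not obviously wrong, but you give no mechanism guaranteeing that some cosyzygy of $A$ actually returns to itself: the $\Omega_\Pi$-periodicity of $\Pi$-modules does not transfer to $e\Pi e$-modules in any direct way, and the phrase ``the round-trip induced by the Nakayama automorphism of $\Pi$'' is not a proof. The paper's route avoids this problem entirely by exploiting the Cohen--Macaulay structure already built in Section~3.

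Two smaller divergences are worth noting. For $\domdim A=0$ in the $D_{2n+1}$ case with $3\notin J$, the paper uses a one-line conceptual argument rather than your socle chase: here $J=J_\f$, so $\fidim A=0$ and hence $\fpdim A=0$, while $A$ is not selfinjective; if $\domdim A\ge 1$ then $\Omega^{-1}A$ would have projective dimension $1$, a contradiction. For non-Iwanaga--Gorensteinness in the $D_{2n+1}$ case with $3\in J$, the paper does carry out a direct computation close in spirit to yours: it identifies $\Omega_A^{-1}(P_1)$ and $\Omega_A^{1}(I_1)$ explicitly as $(-)e$-truncations of $\Pi$-modules and checks that their radicals have different tops, so the isomorphism forced by the assumed Iwanaga--Gorenstein property cannot hold. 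Finally, for $E_6$ the paper simply records that all cases were verified with QPA; your proposed ``reduction to a small type-$A$ sub-computation'' near the branch vertex is unsubstantiated, and with a trivalent vertex it is unclear any such uniform reduction exists---though of course a finite hand check would also suffice.
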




The proof of Theorem \ref{dimension for type A} is given in the next subsection, where we do a case study.
Under the assumption that $J$ is impartial, Theorem \ref{dimension for type A} can be summarised as follows, where the conditions on $\Delta$ and $J$ on the right-hand side are necessary and sufficient for the corresponding contracted preprojective algebra $A=\Pi(\Delta,J)$ to satisfy the homological conditions on the left-hand side.
\[\begin{tikzpicture}[every text node part/.style={align=center}] 
\tikzset{
    iff/.style={implies-implies,double equal sign distance},
    imp/.style={-implies,double equal sign distance},
  }
\node (v1) at (0,0) {semisimple algebras};
\node (v2) at (0,-2) {selfinjective algebras};
\node (v3) at (0,-4) {algebras with\\ $\fidim A=\fidim A^{\op}=0$};
\node (v4) at (4,0) {Auslander algebras of\\ dimension 2\\ $(\gldim A=2=\domdim A)$};
\node (v5) at (4,-2) {min.\ Auslander-Gorenstein\\ algebras of dimension 2\\ $(\idim A=2=\domdim A)$};
\node (v6) at (4,-4) {algebras with\\ $\domdim A\ge2$};

\draw [imp] (v1) -- (v2); 
\draw [imp] (v2) -- (v3);
\draw [imp] (v4) -- (v5);
\draw [imp] (v5) -- (v6);
\draw [imp] (v2) -- (v6);

\node (v11) at (8,0) {$\Delta=A_1$};
\node (v12) at (8,-2) {$J=\iota(J)$};
\node (v13) at (8,-4) {$J=J_\f$};
\node (v14) at (12,0) {$\Delta=A_n$ with $n\ge 3$\\ $J=[1,\frac{n+1}{2}]$ or $[\frac{n+1}{2},n]$};
\node (v15) at (12,-2) {$\Delta=A_n$ with $n\ge3$\\ $J\subset[1,\frac{n+1}{2}]$ or $[\frac{n+1}{2},n]$};
\node (v16) at (12,-4) {$J_\f=\iota(J_\f)$};

\draw [imp] (v11) -- (v12); 
\draw [imp] (v12) -- (v13);
\draw [imp] (v14) -- (v15);
\draw [imp] (v15) -- (v16);
\draw [imp] (v12) -- (v16);
\end{tikzpicture}
\]


\begin{example} \label{tableexample}
We list up (not necessarily impartial) non-empty subsets $J=J_\f\sqcup J_\m$ up to $\iota$-symmetry.  Elements of $J_\f$ and $J_\m$ are coloured in blue and black respectively,
and the elements not in $J$ are coloured white. For type $D_{2n+1}$, the small dots mean that any choices are allowed.
\[\begin{array}{|c||c|c|c|c|c|}\hline
\begin{array}{c}\mbox{($\idim A,$}\\ \mbox{$\fidim A,$}\\ \mbox{$\domdim A)$}\end{array}&\mbox{$(0,0,\infty)$}&
\mbox{$(2,2,2)$}&\mbox{$(\infty,2,2)$}&
\mbox{$(\infty,2,0)$}&
\mbox{$(\infty,0,0)$}
\\ \hline\hline
A_3&\def\arraystretch{.6}\begin{array}{l}\ff\ff\ff\ \ \ff\ee\ff\\ \ee\ff\ee\ \ \ff\ee\ee\end{array}&\def\arraystretch{.6}\begin{array}{l}\mm\ff\ee\end{array}&&&\\ \hline
A_4&\def\arraystretch{.6}\begin{array}{l}\ff\ff\ff\ff\ \ \ff\ee\ee\ff\\ \ee\ff\ff\ee\ \ \ff\ee\ee\ee\\ \ee\ff\ee\ee\end{array}&
\def\arraystretch{.6}\begin{array}{l}\mm\ff\ee\ee\end{array} & \mm\ff\ff\ee &&\def\arraystretch{.6}\begin{array}{l}\ff\ff\ee\ff\\ \ff\ee\ff\ee\end{array} \\ \hline
A_5&\def\arraystretch{.6}\begin{array}{l} \ff\ff\ff\ff\ff\ \ \ff\ff\ee\ff\ff\\ \ff\ee\ff\ee\ff\ \ \ff\ee\ee\ee\ff\\ \ee\ff\ff\ff\ee\ \ \ee\ff\ee\ff\ee\\ \ff\ee\ee\ee\ee\ \ \ee\ff\ee\ee\ee\\ \ee\ee\ff\ee\ee\end{array}&
\def\arraystretch{.6}\begin{array}{l} \mm\mm\ff\ee\ee\\ \mm\ff\ee\ee\ee\\ \mm\ee\ff\ee\ee\\ \ee\mm\ff\ee\ee\end{array}&
\def\arraystretch{.6}\begin{array}{l}\mm\ff\ff\ff\ee\\ \ff\mm\ff\ee\ff\\ \mm\ff\ee\ff\ee\end{array}&&
\def\arraystretch{.6}\begin{array}{l}\ff\ff\ee\ee\ff\\ \ff\ee\ff\ff\ee\\ \ff\ee\ee\ff\ee\end{array}\\ \hline
A_6&\def\arraystretch{.6}\begin{array}{l}\ff\ff\ff\ff\ff\ff\ \ \ff\ff\ee\ee\ff\ff\\ \ff\ee\ff\ff\ee\ff\ \ \ff\ee\ee\ee\ee\ff\\ \ee\ff\ff\ff\ff\ee\ \ \ee\ff\ee\ee\ff\ee\\ \ee\ee\ff\ff\ee\ee\ \ \ff\ee\ee\ee\ee\ee\\ \ee\ff\ee\ee\ee\ee\ \ \ee\ee\ff\ee\ee\ee\end{array}&
\def\arraystretch{.6}\begin{array}{l} \mm\mm\ff\ee\ee\ee\\ \mm\ff\ee\ee\ee\ee\\ \mm\ee\ff\ee\ee\ee\\ \ee\mm\ff\ee\ee\ee\end{array}&
\def\arraystretch{.6}\begin{array}{l}\mm\ff\ff\ff\ff\ee\ \ \ff\mm\ff\ff\ee\ff\\  \mm\mm\ff\ff\ee\ee\ \ \mm\ff\ee\ee\ff\ee\\ \mm\ee\ff\ff\ee\ee\ \ \ee\mm\ff\ff\ee\ee\end{array}&
\def\arraystretch{.6}\begin{array}{l} \mm\ff\ff\ee\ff\ee\\  \ff\mm\ff\ee\ee\ff\\ \mm\ff\ee\ff\ff\ee\\ \mm\ff\ee\ff\ee\ee\end{array}&
\def\arraystretch{.6}\begin{array}{l}\ff\ff\ff\ee\ff\ff\ \ \ff\ff\ee\ff\ee\ff\\ \ff\ff\ee\ee\ee\ff\ \ \ff\ee\ff\ff\ff\ee\\ \ff\ee\ff\ee\ff\ee\ \ \ff\ee\ff\ee\ee\ff\\ \ff\ee\ee\ff\ff\ee\ \ \ff\ee\ee\ff\ee\ee\\ \ff\ee\ee\ee\ff\ee\ \ \ee\ff\ff\ee\ff\ee\\ \ee\ff\ee\ff\ee\ee\end{array}\\ \hline
D_{2n}&\mbox{Any choice}&&&&\\ \hline
D_{2n+1}&\begin{array}{l}{\def\arraystretch{0.1}\begin{array}{l}\ \,\ff\\
\ff\cdot\cdots\cdot\end{array}}
{\def\arraystretch{0.1}\begin{array}{l}\ \,\ee\\
\ee\cdot\cdots\cdot\end{array}}\\ 
{\def\arraystretch{0.1}\begin{array}{l}\ \,\ee\\ \ff\ee\ee\ee\ee\ee\end{array}}\end{array}
&&\def\arraystretch{0.1}\begin{array}{l}\ \,\ee\\ \mm\ff\cdots\cdot\end{array}&&\def\arraystretch{0.1}\begin{array}{l}\ \,\ee\\ \ff\ee\cdots\cdot\end{array}\\ \hline
E_6&\begin{array}{l} 
{\def\arraystretch{0.1}\begin{array}{c}\ff\\
\ff\ff\ff\ff\ff\end{array}}
{\def\arraystretch{0.1}\begin{array}{c}\ee\\
\ff\ff\ff\ff\ff\end{array}}\\
{\def\arraystretch{0.1}\begin{array}{c}\ff\\
\ff\ff\ee\ff\ff\end{array}}
{\def\arraystretch{0.1}\begin{array}{c}\ee\\
\ff\ff\ee\ff\ff\end{array}}\\
{\def\arraystretch{0.1}\begin{array}{c}\ff\\
\ff\ee\ff\ee\ff\end{array}}
{\def\arraystretch{0.1}\begin{array}{c}\ee\\
\ff\ee\ff\ee\ff\end{array}}\\
{\def\arraystretch{0.1}\begin{array}{c}\ff\\
\ff\ee\ee\ee\ff\end{array}}
{\def\arraystretch{0.1}\begin{array}{c}\ee\\
\ff\ee\ee\ee\ff\end{array}}\\
{\def\arraystretch{0.1}\begin{array}{c}\ff\\
\ee\ff\ff\ff\ee\end{array}}
{\def\arraystretch{0.1}\begin{array}{c}\ee\\
\ee\ff\ff\ff\ee\end{array}}\\
{\def\arraystretch{0.1}\begin{array}{c}\ff\\
\ee\ff\ee\ff\ee\end{array}}
{\def\arraystretch{0.1}\begin{array}{c}\ee\\
\ee\ff\ee\ff\ee\end{array}}\\
{\def\arraystretch{0.1}\begin{array}{c}\ff\\
\ee\ee\ff\ee\ee\end{array}}
{\def\arraystretch{0.1}\begin{array}{c}\ee\\
\ee\ee\ff\ee\ee\end{array}}\\
{\def\arraystretch{0.1}\begin{array}{c}\ff\\
\ee\ee\ee\ee\ee\end{array}}
{\def\arraystretch{0.1}\begin{array}{c}\ee\\
\ff\ee\ee\ee\ee\end{array}}
\end{array}&
\begin{array}{l} 
\end{array}&

\begin{array}{l} 
{\def\arraystretch{0.1}\begin{array}{c}\ff\\
\mm\ff\ff\ff\ee\end{array}}
{\def\arraystretch{0.1}\begin{array}{c}\ff\\
\ff\mm\ff\ee\ff\end{array}}\\
{\def\arraystretch{0.1}\begin{array}{c}\ee\\
\mm\ff\ff\ff\ee\end{array}}
{\def\arraystretch{0.1}\begin{array}{c}\ee\\
\ff\mm\ff\ee\ff\end{array}}\\
{\def\arraystretch{0.1}\begin{array}{c}\ff\\
\mm\mm\ff\ee\ee\end{array}}
{\def\arraystretch{0.1}\begin{array}{c}\ee\\
\mm\mm\ff\ee\ee\end{array}}\\
{\def\arraystretch{0.1}\begin{array}{c}\ff\\
\mm\ff\ee\ff\ee\end{array}}
{\def\arraystretch{0.1}\begin{array}{c}\ee\\
\mm\ff\ee\ff\ee\end{array}}\\
{\def\arraystretch{0.1}\begin{array}{c}\ff\\
\mm\ee\ff\ee\ee\end{array}}
{\def\arraystretch{0.1}\begin{array}{c}\ee\\
\mm\ee\ff\ee\ee\end{array}}\\
{\def\arraystretch{0.1}\begin{array}{c}\ff\\
\ee\mm\ff\ee\ee\end{array}}
{\def\arraystretch{0.1}\begin{array}{c}\ee\\
\ee\mm\ff\ee\ee\end{array}}
\end{array}&

\begin{array}{l}
{\def\arraystretch{0.1}\begin{array}{c}\ff\\
\mm\ff\ee\ee\ee\end{array}}\\
{\def\arraystretch{0.1}\begin{array}{c}\ee\\
\mm\ff\ee\ee\ee\end{array}}
\end{array}&

\begin{array}{l}
{\def\arraystretch{0.1}\begin{array}{c}\ff\\
\ff\ff\ee\ee\ff\end{array}}
{\def\arraystretch{0.1}\begin{array}{c}\ee\\
\ff\ff\ee\ee\ff\end{array}}\\
{\def\arraystretch{0.1}\begin{array}{c}\ff\\
\ff\ee\ff\ff\ee\end{array}}
{\def\arraystretch{0.1}\begin{array}{c}\ee\\
\ff\ee\ff\ff\ee\end{array}}\\
{\def\arraystretch{0.1}\begin{array}{c}\ff\\
\ff\ee\ee\ff\ee\end{array}}
{\def\arraystretch{0.1}\begin{array}{c}\ee\\
\ff\ee\ee\ff\ee\end{array}}\\
{\def\arraystretch{0.1}\begin{array}{c}\ff\\
\ff\ee\ee\ee\ee\end{array}}
{\def\arraystretch{0.1}\begin{array}{c}\ff\\
\ee\ff\ee\ee\ee\end{array}}\\
{\def\arraystretch{0.1}\begin{array}{c}\ee\\
\ee\ff\ee\ee\ee\end{array}}
\end{array}\\ \hline

E_7&\mbox{Any choice}&&&&\\ \hline
E_8&\mbox{Any choice}&&&&\\ \hline
\end{array}\]

\end{example}

We will give an explicit negative answer to the question of Auslander and Reiten by giving a class of contracted preprojective algebras $A$ of Dynkin type $A_n$ that are Cohen-Macaulay with dualising module $W$ with $\idim W=2$ such that $\Omega^2(\mod A)=\CM A$, and such that $A$ is not Iwanaga-Gorenstein.
The following observation shows that such $A$'s form a large family.

\begin{proposition}\label{family of counter example}
Let $\Pi$ be a preprojective algebra of Dynkin type, $J\subset\Delta_0$ a non-empty subset, and $A:=\Pi(\Delta,J)$. Then $A$ satisfies $(\idim A,\fidim A,\domdim A)=(\infty,2,2)$ if and only if $J$ is obtained by the following three steps.
\begin{enumerate}[\rm(i)]
\item Take a non-empty subset $K\subset\Delta_0$ satisfying $\iota(K)=K$ and $(\Delta,K)\neq(A_{2n-1},\{n\})$.
\item Take a non-empty subset $L\subset\Delta_0\setminus K$ such that, for each connected component $C$ of the graph $\Delta\setminus K$, at least one of $C\cap L$ or $\iota(C)\cap L$ is empty.
\item Let $J:=K\sqcup L$.
\end{enumerate}
In this case, $J_\f=K$ and $J_\m=L$ hold.
\end{proposition}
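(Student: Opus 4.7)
The plan is to combine Theorems \ref{main ePie 1} and \ref{dimension for type A} with a combinatorial analysis of Definition \ref{define frozen}. By Theorem \ref{main ePie 1}(2)(3), the pair $\fidim A = 2$ and $\domdim A = 2$ is jointly equivalent to $J_\m \neq \emptyset$, $J \neq \iota(J)$, and $J_\f = \iota(J_\f)$. Setting $K := J_\f$ and $L := J_\m$, this already supplies $L \neq \emptyset$, the equality $J = K \sqcup L$, and $\iota(K) = K$. It then remains to match the last requirement $\idim A = \infty$ with condition (ii) together with the exclusion of $(\Delta, K) = (A_{2n-1}, \{n\})$. For types $D_{2n}$, $E_7$, $E_8$, where $\iota = \mathrm{id}$, Theorem \ref{dimension for type A}(1) gives $\idim A < \infty$, consistent with (ii) forcing $L = \emptyset$ (since every component of $\Delta \setminus K$ is self-$\iota$-symmetric). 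Hence the substantive cases are $A_n$, $D_{2n+1}$, and $E_6$.

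For the direction ($\Leftarrow$), given $(K, L)$ from (i)--(iii), I would first show $J_\f = K$ and $J_\m = L$. The inclusion $K \subseteq J_\f$ is the $\ell = 0$ case of Definition \ref{define frozen}, since $\iota(K) = K \subseteq J$ puts every $i \in K$ in $J \cap \iota(J)$. For $L \subseteq J_\m$, if some $i \in L$ were frozen via a path $i = i_0, \ldots, i_\ell = \iota(i')$ with $\ell \geq 1$, then $i_k \notin J \supseteq K$ for $k \geq 1$ combined with the tree structure of $\Delta$ forces the path to stay in the component $C$ of $\Delta \setminus K$ containing $i$; so $i' \in \iota(C)$, and $i' \notin K$ (else $\iota(i') \in K \subseteq J$), giving $i' \in L \cap \iota(C)$ against (ii). The same argument also yields $J \neq \iota(J)$: otherwise $L = \iota(L)$ produces mutually $\iota$-paired elements of $L$ across $C$ and $\iota(C)$. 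Now Theorem \ref{main ePie 1} delivers $\fidim A = \domdim A = 2$, and $\idim A = \infty$ follows by a type-specific check using Theorem \ref{dimension for type A}: for $E_6$ it amounts to the already established $J \neq \iota(J)$; for $D_{2n+1}$ the component analysis around the branching vertex $3$ forces $\#(\{1,2\} \cap J) = 1$; and for $A_n$, containment of $J$ in one half $[1, \tfrac{n+1}{2}]$ would combine with $\iota(K) = K$ to force $(\Delta, K) = (A_{2n-1}, \{n\})$, which is excluded by (i).

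For the direction ($\Rightarrow$), I set $K := J_\f$ and $L := J_\m$, so (iii) and $L \neq \emptyset$ are immediate. Non-emptiness of $K$ follows from $J \neq \iota(J)$: picking $i \in J$ with $\iota(i) \notin J$, the last vertex of $J$ along the unique path in the tree $\Delta$ from $i$ to $\iota(i)$ is frozen by Definition \ref{define frozen}. For condition (ii), suppose for contradiction some component $C$ of $\Delta \setminus K$ meets both $L$ and $\iota(L)$; choose $i \in C \cap L$ and $j \in \iota(C) \cap L$. The non-frozenness of $j$ via the $\ell = 0$ case yields $\iota(j) \notin J$, so $\iota(j) \in C \setminus J$; the unique path from $i$ to $\iota(j)$ in $\Delta$ stays inside $C$, and its sub-path past the last $L$-intersection witnesses that some vertex of $L$ is frozen --- a contradiction. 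Finally, $(\Delta, K) = (A_{2n-1}, \{n\})$ would split $\Delta \setminus K$ into two $\iota$-swapped components, so (ii) would confine $J$ to one half of $A_{2n-1}$ and give $\idim A \leq 2$ by Theorem \ref{dimension for type A}(2)(ii), contradicting $\idim A = \infty$. The main obstacle throughout is this combinatorial link between ``frozen'' and the component condition; the tree structure of Dynkin diagrams, which guarantees that a path between two vertices of a connected component of $\Delta \setminus K$ stays inside that component, is precisely what makes the argument go through uniformly.
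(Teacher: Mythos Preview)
Your approach is correct and matches the paper's strategy exactly: the paper proves only the ``if'' direction tersely (asserting $J_\f=K$ and $J_\m=L$ ``by construction'' and citing Theorems~\ref{main ePie 1} and~\ref{dimension for type A}), and leaves the ``only if'' direction entirely to the reader; your version supplies all the omitted combinatorics. Two small points to tighten: the equivalence ``$\domdim A=2 \Leftrightarrow J\neq\iota(J)$ and $J_\f=\iota(J_\f)$'' needs Theorem~\ref{dimension for type A}, not just Theorem~\ref{main ePie 1}(3), which only gives one implication; and in showing $L\subseteq J_\m$ you should also dispose of the $\ell=0$ case, where $\iota(i)\in J$ forces $\iota(i)\in L\cap\iota(C)$ by the same component argument you already use.
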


\begin{proof}
We prove the ``if'' part. By construction, $J_\f=K$ and $J_\m=L$ hold. Thus $\iota(J_\f)=\iota(K)=K=J_\f$ holds. Since $L\neq\emptyset$, $\fidim A=2$ holds by Theorem \ref{main ePie 1}(2). Since $\iota(J)\neq J$, $\domdim A=2$ holds by Theorem \ref{main ePie 1}(3). Since $(\Delta,K)\neq(A_{2n-1},\{n\})$ and $\#J\ge2$ by construction, $J$ is impartial. Thus $\idim A=\infty$ holds by Theorem \ref{dimension for type A}.

The ``only if'' part follows from a similar argument. The details is left to the reader.
\end{proof}

We give a single explicit example.

\begin{example}
Let $\Delta=A_n$ with $n\ge4$, $J:=\{1,2,\ldots,n-1\}\subset\Delta_0$ and $A:=\Pi(\Delta,J)$.
Then $A=KQ/I$ holds, where $Q$ is given by
\[\begin{tikzcd}[cells={nodes={cyan!90!black,inner sep=1pt,outer sep=3pt}}]
	|[black]|1 && 2 && 3 && |[text=black]|\cdots && {n-2} && {n-1}
	\arrow["{a_1}", shift left=1, from=1-1, to=1-3]
	\arrow["{b_1}", shift left=1, from=1-3, to=1-1]
	\arrow["{a_2}", shift left=1, from=1-3, to=1-5]
	\arrow["{b_2}", shift left=1, from=1-5, to=1-3]
	\arrow["{a_3}", shift left=1, from=1-5, to=1-7]
	\arrow["{b_3}", shift left=1, from=1-7, to=1-5]
	\arrow["{a_{n-3}}", shift left=1, from=1-7, to=1-9]
	\arrow["{b_{n-3}}", shift left=1, from=1-9, to=1-7]
	\arrow["{a_{n-2}}", shift left=1, from=1-9, to=1-11]
	\arrow["{b_{n-2}}", shift left=1, from=1-11, to=1-9]
\end{tikzcd}\]
and $I=\langle a_1b_1, (b_{n-2}a_{n-2})^2, b_i a_i-a_{i+1} b_{i+1} \mid i=1,...,n-3 \rangle$.
Note that only $1\in J$ is mutable, and so indecomposable projective $A$-modules $P_i=e_i A$ are injective, except for $i=1$. 
By our results $A$ is Cohen-Macaulay of dominant dimension 2 with dualising module $W=\Omega^2(I_1)\oplus I_2\oplus I_3\oplus \cdots I_{n-1}$ for $I_i:=D(e_iA)$, and $A$ is not Iwanaga-Gorenstein.
\end{example}

We give an example of a contracted preprojective algebra of Dynkin type $E_6$ with dominant dimension zero and finitistic dimension two by explicit quiver and relations,which shows that in general we do not have $\CM A= \Omega^2(\mod A)$ for a contracted preprojective algebra of Dynkin type with finitistic dimension two.

\begin{example}
Let $\Delta=E_6$, $J={\def\arraystretch{0.5}\begin{array}{c}\ee\\
\mm\ff\ee\ee\ee\end{array}}$ and $A=\Pi(\Delta,J)$.
Then $A=KQ/I$ holds, where $Q$ is given by
\[\begin{tikzcd}
1 && |[cyan!90!black]|2 
\arrow["{a}", shift left=1, from=1-1, to=1-3]
\arrow["{b}", shift left=1, from=1-3, to=1-1]
\arrow["{c}", from=1-3, to=1-3, loop, in=325, out=35, distance=10mm]
\end{tikzcd}
\]
and $ I=\langle ab, cbac, c^2+bacba\rangle$. 
The dualizing module of this algebra $C$ is given by $W=I_1 \oplus X$, where $I_1$ is the first indecomposable injective $C$-module and the module $X$ is the indecomposable module with dimension vector $[2,3]$ and quiver representation given by 
$a\mapsto \left[\begin{smallmatrix}
1 & 0 & 0 \\ 
0 & 1 & 0
 \end{smallmatrix}\right],\ b\mapsto \left[\begin{smallmatrix}
 0 & 0 \\ 
0 & 0 \\
 0 & 1
 \end{smallmatrix}\right],\ c\mapsto\left[\begin{smallmatrix}
0 & 0 & 0 \\ 
1 & 0 & 0 \\
0 & 0 & 0 
 \end{smallmatrix}\right]$.
In this example $W$ is not in $\Omega^2(\mod A)$ and thus we have that $\CM A \neq \Omega^2(\mod A)$, as $W \in \CM A$ but $W \notin \Omega^2(\mod A)$.
\end{example}

\subsection{Proof of Theorem \ref{dimension for type A}}

To prove Theorem \ref{dimension for type A}(1), we recall the following basic fact. 

\begin{lemma}\label{eBe selfinjective}
Let $A$ be a selfinjective algebra and $e$ an idempotent.
If $\nu(eA)\simeq eA$, then $eAe$ is selfinjective.
\end{lemma}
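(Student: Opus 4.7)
The plan is to show directly that the right regular module $B_B$ is injective, by exhibiting an isomorphism $B_B \simeq D({}_BB)$ with the injective cogenerator of $\mod B$. The key tool is the standard functor $F:=\Hom_A(eA,-)\simeq (-)e:\mod A\to\mod B$, which restricts to an equivalence $\add(eA)\simeq\proj B$ sending $eA$ to $B_B$.

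First, I will carry out the routine identification of the Nakayama functor on $eA$: using that $DA$ is an $A$-bimodule, one has
\[\nu(eA)=eA\otimes_A DA\simeq e\cdot DA\simeq D(Ae)\]
as right $A$-modules. Combined with the hypothesis $\nu(eA)\simeq eA$, this gives an isomorphism $D(Ae)\simeq eA$ in $\mod A$. Since under this hypothesis $D(Ae)\in\add(eA)$, both sides lie in the domain of the equivalence mentioned above.

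Next, I will compute $F(D(Ae))$ by tensor-Hom adjunction:
\[F(D(Ae))=\Hom_A(eA,D(Ae))\simeq D(eA\otimes_A Ae)=D(eAe)=D({}_BB)\]
as right $B$-modules. Applying $F$ to the isomorphism $D(Ae)\simeq eA$ then yields
\[D({}_BB)\simeq F(D(Ae))\simeq F(eA)=eAe=B_B.\]
Hence $B_B$ is injective, and so $B=eAe$ is selfinjective.

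I do not foresee any real obstacle here: the whole argument is a short tensor/Hom calculation, and the content is simply that the hypothesis $\nu(eA)\simeq eA$ translates under $(-)e$ to the defining property of selfinjectivity for $B$.
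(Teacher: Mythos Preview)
Your proof is correct and is essentially the same argument as the paper's: both compute $\Hom_A(eA,\nu(eA))\simeq D(eAe)$ (you via the identification $\nu(eA)\simeq D(Ae)$ and tensor--Hom, the paper via the defining adjunction of $\nu$) and then use the hypothesis $\nu(eA)\simeq eA$ to get $eAe\simeq D(eAe)$ as $eAe$-modules. The only difference is presentation; the content is identical.
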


\begin{proof}
We have isomorphisms $eAe=\Hom_A(eA,eA)\simeq D\Hom_A(eA,\nu(eA))\simeq D\Hom_A(eA,eA)\simeq D(eAe)$ of $eAe$-modules.
\end{proof}

Immediately, we obtain the following observation, which gives a proof of Theorem \ref{dimension for type A}(1).

\begin{proposition}\label{weakly symmetric}
Each contracted preprojective algebra of type $D_n$ with even $n$ and $E_n$ with $n=7,8$ is selfinjective.
\end{proposition}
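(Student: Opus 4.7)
The plan is to apply Lemma \ref{eBe selfinjective} to the selfinjective algebra $\Pi$ and the idempotent $e=\sum_{i\in J}e_i$, reducing the problem to checking that $\nu(e\Pi)\simeq e\Pi$ for every non-empty subset $J\subset\Delta_0$ in the three types listed.

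First I recall that $\Pi$ is selfinjective with Nakayama permutation given by the canonical involution $\iota$, so $\nu(e_i\Pi)\simeq e_{\iota(i)}\Pi$ for each vertex $i$. Consequently $\nu(e\Pi)\simeq\bigoplus_{i\in J}e_{\iota(i)}\Pi$, which is isomorphic to $e\Pi$ as soon as $\iota(J)=J$ (as a subset, i.e.\ as a multiset of distinct idempotents).

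Next I invoke the definition of $\iota$ from the beginning of Section \ref{section 4}: by inspection of the case distinction, $\iota$ is the identity on the vertex set of $\Delta$ precisely for $\Delta=D_n$ with $n$ even and for $\Delta=E_7,E_8$. Hence in each of these cases $\iota(J)=J$ holds automatically for any $J\subset\Delta_0$, and thus $\nu(e\Pi)\simeq e\Pi$.

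Applying Lemma \ref{eBe selfinjective} then yields that $A=e\Pi e$ is selfinjective, as required. There is no real obstacle here; the statement is essentially an immediate consequence of the lemma together with the observation that the canonical involution is trivial in the listed Dynkin types.
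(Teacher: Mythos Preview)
Your proof is correct and follows exactly the same approach as the paper: since $\iota$ is the identity in these types, Lemma \ref{eBe selfinjective} applies immediately. The paper's proof is a single sentence to this effect.
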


\begin{proof}
Since $\iota$ is the identity in these cases, the assertion follows from Lemma \ref{eBe selfinjective}.\end{proof}

Next we prove Theorem \ref{dimension for type A}(2) for type $A_n$.
The following observation is crucial.

\begin{proposition}\label{converse yoneda}
Assume that $J\subset[1,n]$ is impartial. For each $i\in[1,n]$, the following assertions hold.
\begin{enumerate}[\rm(1)]
\item $\T(M,X_i)\in\proj A$ if and only if $i\in J$.
\item $\T(M,X_i)\in\inj A$ if and only if $i\in\iota(J)$.
\end{enumerate}
\end{proposition}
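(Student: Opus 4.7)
The plan is to identify $\T(M,X_i)\simeq e_i\Pi e$ as right $A$-modules via the equivalence $\T\simeq\proj\Pi$ sending $X_i\mapsto e_i\Pi$. The easy directions are then immediate: when $i\in J$ we have $e_i\Pi e=e_iA\in\proj A$ since $e_ie=e_i$, and when $i\in\iota(J)$ (so $\iota(i)\in J$) the selfinjectivity of $\Pi$ with Nakayama permutation $\iota$ yields, via the identity $D(e_{j'}\Pi e_j)\simeq e_j\Pi e_{\iota(j')}$ summed $J$-additively over $j'$, an iso of right $A$-modules $e_i\Pi e\simeq D(Ae_{\iota(i)})=I_{\iota(i)}\in\inj A$.

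For the converse direction of (1), I suppose $i\notin J$ and produce a nonzero element in the kernel of the projective cover $\phi:P=\bigoplus_{j\in\partial(i)}(e_jA)^{m_j(i)}\twoheadrightarrow e_i\Pi e$. Here $S_j$ appears in $\top(e_i\Pi e)$ with multiplicity $m_j(i)$ precisely when $j\in J$ arises as the endpoint of a path $i\to j$ in $\Pi$ not passing through $J\setminus\{j\}$, and $\phi$ is given by left multiplication by such chosen path representatives. The construction of kernel elements uses the preprojective relations $a_{k-1}^*a_{k-1}=a_ka_k^*$ at each interior vertex $k$ (for $2\le k\le n-1$) together with the boundary vanishings $a_1a_1^*=0=a_{n-1}^*a_{n-1}$.

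If $i$ has $J$-elements on both sides, with $j_L<i<j_R$ the nearest ones, then iteratively applying the relations to the product of the leftward path $p_L:i\to j_L$ with the straight path $x\in e_{j_L}\Pi e_{j_R}$ propagates an $a^*a$-pattern rightward and yields $p_L\cdot x=p_R\cdot b''$ for the rightward path $p_R:i\to j_R$ and some element $b''\in e_{j_R}\Pi e_{j_R}$ (possibly zero); the pair $(x,-b'')\in(e_{j_L}A)\oplus(e_{j_R}A)$ is then a nonzero element of $\ker\phi$. Alternatively, if only one direction from $i$ reaches $J$ (say rightward, at $j:=\min J\cap[i+1,n]$), then impartiality forces $2\le j\le n-1$, so $\dim e_j\Pi e_j=\min(j,n+1-j)\ge 2$ and the loop generator $t=a_ja_j^*=a_{j-1}^*a_{j-1}\in e_j\Pi e_j$ admits a nonzero power $t^d$ with $0<d<\min(j,n+1-j)$; iterated application of the preprojective relations to $p\cdot t^d$, where $p:i\to j$ is the straight path, propagates the $a^*a$-pattern toward the appropriate boundary vertex and forces $p\cdot t^d=0$, so $t^d$ is a nonzero element of $\ker\phi$. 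In either case $\phi$ is not injective, so $e_i\Pi e$ is a proper quotient of its projective cover and is not projective.

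The converse direction of (2) follows by a dual argument using socles and injective envelopes, or equivalently by applying $D$ to reduce to (1) for $A^{\mathrm{op}}$. The main obstacle lies in verifying the two combinatorial identities $p_L\cdot x=p_R\cdot b''$ and $p\cdot t^d=0$ with $t^d\neq 0$, which rely on impartiality in an essential way to exclude degenerate cases like $J=\{n\}$ for which the needed interior loop elements or round-trip identifications in $\Pi$ do not exist.
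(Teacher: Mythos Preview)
Your proposal is correct.  Both arguments handle the hard direction of (1) by the same two-case split (neighbours of $i$ in $J$ on both sides versus only one side) and by exhibiting a nonzero obstruction to projectivity, but the packaging differs.  The paper completes the minimal right $\M$-approximation of $X_i$ to a triangle with third term $X_{i'}$ (with $i'=a+b-i$ in the two-sided case and $i'=a+n+1-i$ in the one-sided case), observes that projectivity of $\T(M,X_i)$ would force the induced map $\T(M,X_{i'})\to\T(M,X_a(\oplus X_b))$ to vanish, and contradicts this by producing a nonzero composite via a separate lemma on nonvanishing of minimal and round-trip paths in $\Pi(A_n)$.  You instead stay in $\mod A$ and chase the relations $a_ka_k^*=a_{k-1}^*a_{k-1}$ to write down explicit kernel elements of the projective cover: in the two-sided case your propagation gives precisely $p_L\cdot x=p_R\cdot c_{j_R}^{\,i-j_L}$ (so $b''=c_{j_R}^{\,i-j_L}$), and in the one-sided case the correct choice is $d=i$ (impartiality gives $j\le\tfrac{n+1}{2}$, whence $i<j\le n+1-j$, so $t^i\neq0$, while the bump propagates to vertex $1$ and kills $p\cdot t^i$); you should state this $d$ explicitly rather than leave it implicit.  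Once one unwinds the identifications the two constructions agree: for instance the paper's round-trip path $a\to i'\to a$ in the one-sided case is exactly your loop power $c_a^{\,n+1-i}$.  The paper's route fits the triangulated framework already in place and isolates the path-nonvanishing as a reusable lemma; yours is more elementary and self-contained, absorbing that lemma into direct relation manipulation.  For (2) both proofs reduce to (1) by duality.
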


To prove this, we need easy observations on preprojective algebras.
We call a path $p$ in the double quiver type of type $A_n$ \emph{minimal} if all arrows in $p$ are either in the right direction, or in the left direction.

\begin{lemma}\label{paths of PPA}
Let $\Pi$ be a preprojective algebra of type $A_n$. Then the following assertions hold.
\begin{enumerate}[\rm(1)]
\item All minimal paths are non-zero elements in $\Pi$.
\item Let $a,b\in[1,n]$ with $a\le b$, and let $p:a\to b$ and $q:b\to a$ be the minimal paths. Then the path $qp:a\to a$ is non-zero in $\Pi$ if and only if $2a\ge b+1$ holds.
\end{enumerate}
\end{lemma}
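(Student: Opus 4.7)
The plan is to prove part (1) by constructing explicit $\Pi$-modules on which minimal paths act nontrivially, and to prove part (2) by reducing the loop $qp$ to a controlled normal form via the preprojective relations, with the remaining non-vanishing step handled by a dimension argument.

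For (1), the empty paths $e_i$ are nonzero since they are primitive orthogonal idempotents summing to $1$. For a nontrivial forward minimal path, take $V = \bigoplus_{i=1}^n V_i$ with $V_i \cong k$ and define a right $\Pi$-module structure by letting each arrow $a_j\colon j\to j+1$ act as the identity $V_j \to V_{j+1}$ and each dual arrow $a_j^*$ act as zero. All preprojective relations are automatically satisfied since every relation contains some $a^*$-factor. The forward minimal path $a_a\cdots a_{b-1}$ then sends a generator of $V_a$ to a generator of $V_b$, hence is nonzero in $\Pi$. Swapping the roles of $a_j$ and $a_j^*$ yields a dual module witnessing non-vanishing of backward minimal paths.

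For (2), set $k := b - a$ and write $L_a^{(k)} := qp$. An induction on $k$, using the interior relation $a_i^*a_i = a_{i+1}a_{i+1}^*$ to push an extra factor of $a_a a_a^*$ through the loop, shows $L_a^{(k)} = (a_aa_a^*)^k$. Combining this with the relation $a_aa_a^* = a_{a-1}^*a_{a-1}$ at vertex $a$ (valid for $2\le a\le n-1$) and the boundary relation $a_1a_1^*=0$ at vertex $1$ yields the recursion
\[
L_a^{(k)} = a_{a-1}^*\,L_{a-1}^{(k-1)}\,a_{a-1}\ \ (a \ge 2), \qquad L_1^{(k)} = 0 \text{ for } k \ge 1.
\]
Iterating this recursion: if $k \le a-1$, it terminates at $L_{a-k}^{(0)} = e_{a-k}$ and produces the explicit left-then-right loop
\[
L_a^{(k)} = a_{a-1}^*a_{a-2}^*\cdots a_{a-k}^*\, a_{a-k}\cdots a_{a-2}a_{a-1};
\]
if $k \ge a$, the iteration reaches the factor $L_1^{(k-a+1)} = 0$, forcing $L_a^{(k)} = 0$. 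This already gives one direction of (2): $qp = 0$ whenever $2a \le b$.

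The main obstacle is to show that the left-then-right loop displayed above is \emph{nonzero} when $k \le a - 1$, since the modules from (1) annihilate any mixed loop. My preferred approach is to invoke the classical dimension formula $\dim_k e_a \Pi(A_n)e_a = \min(a, n+1-a)$ with basis $\{e_a, \ell_a, \ldots, \ell_a^{\min(a,n+1-a)-1}\}$ where $\ell_a := a_aa_a^*$; this formula can be proved by exhibiting a faithful representation of $\Pi(A_n)$ (for instance, the one coming from its realisation as the stable endomorphism algebra of Cohen--Macaulay modules over the Kleinian singularity $k[[x,y,z]]/(x^{n+1} - yz)$) in which $\ell_a$ acts as a nilpotent Jordan block of rank $\min(a,n+1-a)$. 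One concludes $\ell_a^k \ne 0$ iff $k < \min(a, n+1-a)$; under the standing constraint $k \le n-a$ (from $b \le n$) this collapses to $k \le a-1$, i.e., $2a \ge b+1$, completing the proof. Alternatively, the dimension formula can be established in a self-contained manner by induction on $n$, using the recursion above together with (1) to verify linear independence of the powers of $\ell_a$.
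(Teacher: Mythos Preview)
The paper does not actually give a proof of this lemma; it is stated as an ``easy observation'' and used immediately in the proof of Proposition~\ref{converse yoneda}. So there is no argument in the paper to compare against, and your task was really to supply a proof from scratch.

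Your argument is correct. Part~(1) is clean. For part~(2), the reduction $L_a^{(k)}=(a_aa_a^*)^k$ via the commutation relation $a_i^*a_i=a_{i+1}a_{i+1}^*$, followed by the recursion $L_a^{(k)}=a_{a-1}^*L_{a-1}^{(k-1)}a_{a-1}$ and the boundary relation $a_1a_1^*=0$, cleanly establishes vanishing when $k\ge a$, i.e.\ when $2a\le b$. For non-vanishing when $k\le a-1$ you invoke the dimension formula $\dim_k e_a\Pi e_a=\min(a,n+1-a)$; this is standard (it is visible, for instance, in the Loewy diagrams of $\Pi(A_6)$ displayed in Example~\ref{eg:sec3 loewy}), so the appeal is legitimate even though your two suggested derivations are only sketched. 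One small point worth making explicit is that $e_a\Pi e_a$ is \emph{spanned} by the powers of $\ell_a$ (any closed path at $a$ reduces, via the same commutation relations you used, to some $\ell_a^m$); combined with the dimension count this forces $e_a,\ell_a,\dots,\ell_a^{\min(a,n+1-a)-1}$ to be linearly independent, which is what you need.
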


\begin{proof}[Proof of Proposition \ref{converse yoneda}]
(1) It suffices to prove the ``only if'' part.  Assume that $i\notin J$ satisfies $\T(M,X_i)\in\proj A$.

First, we consider the case that both $[1,i)\cap J$ and $(i,n]\cap J$ is non-empty. Then there exists a minimal right $\M$-approximation $(p\cdot\ ,q\cdot\ ):X_a\oplus X_b\to X_i$, where $a:=\max([1,i)\cap J)$, $b:=\min((i,n]\cap J)$ and $p:a\to i$ and $q:b\to i$ are the minimal paths. Then there exists a triangle
\[X_{i'}\xrightarrow{{r\cdot\choose -s\cdot}} X_a\oplus X_b\xrightarrow{(p\cdot\ q\cdot)} X_i\to X_{i'}[1]\]
where $i':=a+b-i$, and $r:i'\to a$ and $s:i'\to b$ are the minimal paths. Applying $\T(M,-)$, we obtain an exact sequence
\[\T(M,X_{i'})\to\T(M,X_a\oplus X_b)\to\T(M,X_i)\to0,\]
where the right map is the projective cover of $\T(M,X_i)$ and hence an isomorphism. Thus the map ${r\cdot\choose -s\cdot}:\T(M,X_{i'})\to\T(M,X_a\oplus X_b)$ is zero.
This is impossible since the minimal path $t:b\to i'$ gives a morphism $t\cdot:X_b\to X_{i'}$ such that  $rt:b\to a$ is the minimal path and hence gives a non-zero morphism $rt\cdot:X_b\to X_a$ by Lemma \ref{paths of PPA}(1).

Next, we consider the case that at least one of $[1,i)\cap J$ and $(i,n]\cap J$ is empty. 
Without loss of generality, we assume $(i,n]\cap J=\emptyset$, or equivalently, $J\subset [1,i)$.
Then there exists a minimal right $\M$-approximation $p\cdot:X_a\to X_i$, where $a:=\max([1,i)\cap J)$ and $p:a\to i$ is the path of minimal length. Then there exists a triangle
\[X_{i'}\xrightarrow{r\cdot} X_a\xrightarrow{p\cdot} X_i\to X_{i'}[1]\]
where $i':=a+n+1-i$, and $r:i'\to a$ is the minimal path. Applying $\T(M,-)$, we obtain an exact sequence
\[\T(M,X_{i'})\to\T(M,X_a)\to\T(M,X_i)\to0,\]
where the right map is the projective cover of $\T(M,X_i)$ and hence an isomorphism. Thus the map $r\cdot:\T(M,X_{i'})\to\T(M,X_a)$ is zero. But this is impossible. In fact, since $J$ is impartial, $\frac{n+1}{2}\le a<i$ holds. Thus $n+2\le a+i$ and $i'+1=a+n+2-i\le 2a$ hold. By Lemma \ref{paths of PPA}(2), for the minimal path $t:a\to i'$, the composition $rt:a\to a$ is non-zero, and so is the morphism $rt\cdot:X_a\to X_a$. 

(2) By the dual of (1), for $j\in[1,n]$, $\T(X_j,M)\in\proj A^{\op}$ holds if and only if $j\in J$ holds.
By Serre duality, we have $\T(M,X_i)\simeq D\T(X_{\iota(i)},M)$. Thus $\T(M,X_i)\in\inj A$ if and only if $\T(X_{\iota(i)},M)\in\proj A^{\op}$ if and only if $\iota(i)\in J$.
\end{proof}

\begin{lemma}\label{basic of J_f}
Let $Q$ be a Dynkin quiver and $J\subset Q_0$.
\begin{enumerate}[\rm(1)]
\item $J\cap\iota(J)\subset J_\f$ holds.
\item $J_\f\subset\iota(J)$ if and only if $J_\f=J\cap\iota(J)$ if and only if $J_\f=\iota(J_\f)$. 
\end{enumerate}
\end{lemma}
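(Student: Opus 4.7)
The plan is to exploit two features of the setup: the \emph{frozen} condition in Definition \ref{define frozen} allows the trivial sequence of length $\ell = 0$, and the map $\iota$ is a bijective involution of $\Delta_0$. The first drives part (1), while combining it with the second yields the three equivalences in part (2).

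For part (1), I would take $i \in J \cap \iota(J)$, write $i = \iota(i')$ for some $i' \in J$, and observe that the one-term sequence $i_0 := i = \iota(i')$ certifies $i$ as frozen: the condition that each $i_j$ with $1 \le j \le \ell$ avoid $J$ is vacuous when $\ell = 0$. Hence $J \cap \iota(J) \subset J_\f$.

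For part (2), I would prove the three conditions equivalent by a short cycle. The implication $J_\f = J \cap \iota(J) \Rightarrow J_\f \subset \iota(J)$ is trivial, and its converse follows from part (1) together with the definitional inclusion $J_\f \subset J$. Next, since $\iota$ is a bijective involution of $\Delta_0$ it commutes with intersections and satisfies $\iota \circ \iota = \mathrm{id}$, giving $\iota(J \cap \iota(J)) = \iota(J) \cap J$; thus $J_\f = J \cap \iota(J)$ forces $\iota(J_\f) = J_\f$. Conversely, $J_\f = \iota(J_\f)$ immediately implies $J_\f \subset \iota(J)$ because $J_\f \subset J$. The only nuance in the whole argument is recognising that $\ell = 0$ is admissible in Definition \ref{define frozen}; beyond that there is no substantive obstacle.
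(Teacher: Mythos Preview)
Your proof is correct and follows essentially the same approach as the paper: part (1) via the trivial $\ell=0$ sequence in Definition \ref{define frozen}, and part (2) via the same cycle of implications using (1), $J_\f\subset J$, and the fact that $\iota$ is an involution. The paper's proof is just a terser version of yours.
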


\begin{proof}
(1) This is clear from the definition of $J_\f$.

(2) If $J_\f\subset\iota(J)$, then $J_\f\subset J\cap\iota(J)$ holds. Thus the equality holds by (1).
If $J_\f=J\cap\iota(J)$, then $\iota(J_\f)=\iota(J)\cap J=J_\f$ holds.
If $J_\f=\iota(J_\f)$, then $J_\f\subset\iota(J)$ clearly.
\end{proof}

Now we are able to prove Theorem \ref{dimension for type A}(2) for type $A_n$.

\begin{proof}[Proof of Theorem \ref{dimension for type A}(2)]
(3) $\domdim A\ge2$ if and only if $I_\F\in\proj A$ by \eqref{UIII sequence G} and Lemma \ref{Miyachilemma}. By Proposition \ref{converse yoneda}, this is equivalent to $J_\f\subset\iota(J)$, which is equivalent to $J_\f=\iota(J_\f)$ by Lemma \ref{basic of J_f}(2).

(2) If $J=\iota(J)$, then $A$ is selfinjective. If $J\subset[1,m]$ holds for $m:=\frac{n+1}{2}$, then $A\simeq\End_{k[x]/(x^m)}(\bigoplus_{i\in J}k[x]/(x^i))$ satisfies $\idim A\ge2$.

In the rest, assume $\idim A<\infty$. Since $U$ is a dualizing module, $U\in\proj A$ holds. In particular, $I_\F\in\proj A$ holds, and hence $J_\f=\iota(J_\f)$ holds by (3). Moreover, for each $i\in J_\m$, the sequence \eqref{IIUP sequence G} shows that $\top U_i$ has $S_i$ as a direct summand. Since $U_i$ is indecomposable, $U_i\simeq P_i$. holds and we have an exact sequence
\[0\to\overline{I}_i\to I_i\to P_i\to\overline{P}_i\to0.\]
Let $L_i$ be the image of the middle map.

Now we claim that, for each $i\in J_\m$, $J_\f$ is contained in either $[1,i)$ or $(i,n]$. Otherwise, let $i_-:=\max([1,i)\cap J_\f)$ and $i_+:=\min((i,n]\cap J_\f)$. Since $L_i$ is the kernel of $P_i\to\overline{P}_i$, we have $\top L_i=S_{i_-}\oplus S_{i_+}$.
Since $\max([1,\iota(i))\cap J_\f)=\iota(i_+)$ and $\min((\iota(i),n]\cap J_\f)=\iota(i_-)$ hold, $\top L_i=\top(I_i/\overline{I}_i)=S_{\iota(i_+)}\oplus S_{\iota(i_-)}$.
Comparing the two descriptions of $\top L_i$, we obtain $\{i_-,i_+\}=\{\iota(i_-),\iota(i_+)\}$. Comparing the smaller element, we have $i_-=\iota(i_+)$.
This implies $i_-<\iota(i)<i_+$ and hence $[i,\iota(i)]\cap J_\f\neq\emptyset$, a contradiction to Lemma \ref{enough frozen}. Thus the claim holds.

Without loss of generality, we may assume $i:=\max J_\m<i_+:=\min J_\f$. Since $L_i$ is the kernel of $P_i\to\overline{P}_i$, we have $\top L_i=S_{i_+}$. Since $\max([1,\iota(i))\cap J_\f)=\iota(i_+)$ and $(\iota(i),n]\cap J_\f)=\emptyset$ hold, $\top L_i=\top(I_i/\overline{I}_i)=S_{\iota(i_+)}$. Comparing two descriptions of $\top L_i$, we obtain $i_+=\iota(i_+)$. Since $J_\f=\iota(J_\f)$, we have $J_\f=\{i_+\}$.
Consequently, we have $J\subset[1,\frac{n+1}{2}]$.

(1) This is an easy consequence of (2).
\end{proof}





To prove Theorem \ref{dimension for type A}(3) for type $D_n$ with odd $n$, we enumerated the arrows as follows.
\[\begin{tikzcd}
	&& 2 \\
	\\
	1 && 3 && 4 && \cdots && {n-1} && n
	\arrow["{a_2}", shift left=1, from=1-3, to=3-3]
	\arrow["{a_1}", shift left=1, from=3-1, to=3-3]
	\arrow["{b_2}", shift left=1, from=3-3, to=1-3]
	\arrow["{b_1}", shift left=1, from=3-3, to=3-1]
	\arrow["{a_3}", shift left=1, from=3-3, to=3-5]
	\arrow["{b_3}", shift left=1, from=3-5, to=3-3]
	\arrow["{a_4}", shift left=1, from=3-5, to=3-7]
	\arrow["{b_4}", shift left=1, from=3-7, to=3-5]
	\arrow["{a_{n-2}}", shift left=1, from=3-7, to=3-9]
	\arrow["{b_{n-2}}", shift left=1, from=3-9, to=3-7]
	\arrow["{a_{n-1}}", shift left=1, from=3-9, to=3-11]
	\arrow["{b_{n-1}}", shift left=1, from=3-11, to=3-9]
\end{tikzcd}\]

\begin{proof}[Proof of Theorem \ref{dimension for type A}(3)]

Consider type $D_n$ with odd $n$.
\begin{enumerate}
\item If $1,2\in J$ or $1,2\notin J$, then $A$ is selfinjective.
\item Assume $1\in J$ and $2\notin J$. If $3\in J$, then $J=J_\f$ and $A$ has finitistic dimension $0$.
If $3\notin J$, then $J_\f=J\setminus\{1\}$ and $A$ has finitistic dimension $2$.
\end{enumerate}

Next we consider type $D_n$ in order to verify the statements in the Table \ref{tableexample}.
Let $\Pi=\Pi(D_n)$ denote the preprojective algebra of Dynkin type $D_n$.
If $n$ is even then the algebra is symmetric and thus $e \Pi e$ is selfinjective for every idempotent $e$. We can thus assume that $n$ is odd in the following. 
We deal with the case of primitive idempotents first:

$e \Pi e$ in type $D_n$ is selfinjective if $e$ is primitive and corresponds to a point $i$ with $i>2$ by Lemma \ref{eBe selfinjective}.
It is elementary to see that $e_1 \Pi e_1 \simeq e_2 \Pi e_2$ is isomorphic to $K[x]/(x^{\frac{n-1}{2}})$ and thus selfinjective. We can thus assume now that $e$ is not primitive in the following.
Since the Nakayama permutations swaps only 1 and 2 and fixes all other points, if an idempotent $e$ does not contain $e_1$ and $e_2$, then $e \Pi e$ is selfinjective.
If it contains $e_1$ and $e_2$ then $e \Pi e$ is also selfinjective.
Thus we can assume by symmetry that $e$ contains $e_1$ but not $e_2$ and also that $e$ is the sum of at least two primitive idempotents.
We will consider the following two cases:

\underline{Case 1: $e e_3=0$}
By Theorem \ref{main ePie 1}, we have $A$ non-selfinjective with $\fidim A^{\op}=0$.
This implies that $\domdim A=0$; otherwise, $\domdim A\ge 1$ implies that $\pdim\Omega^{-1}(A)=1$, a contradiction.

\underline{Case 2: $e e_3 \neq 0$}
By Theorem \ref{main ePie 1}, we have $\domdim A=2$.
We now show that the algebra is not Iwanaga-Gorenstein.
Suppose on the contrary that $e\Pi e$ is Iwanaga-Gorenstein. 
Note that, since $e_2$ is not a summand, we have $I_1 = D(e\Pi e_1)\simeq e_2 \Pi e$, the unique indecomposable non-projective injective $A$-module and $P_1=e_1\Pi e$ is the unique indecomposable non-injective projective $A$-module.
Using that $\domdim A=2$
, we have an exact sequence of $A$-modules
\[
0\to P_1 \to P \to P' \to I_1 \to 0
\]
for some projective-injective $P,P' \in \proj A$.

To find $P'$, we consider the short exact sequence of $\Pi$-modules:
$$0 \rightarrow b_2 \Pi \rightarrow e_3 \Pi \xrightarrow{{a_2}\cdot -} e_2\Pi \rightarrow \top e_2\Pi\rightarrow0.$$
Applying the exact functor $(-)e$, we obtain the exact sequence of $A$-modules:
$$0 \rightarrow b_2 \Pi e \rightarrow e_3 \Pi e \rightarrow e_2 \Pi e \rightarrow 0.$$
This means that $P'=e_3 \Pi e$ and $\Omega_{A}^1(I_1)= b_2 \Pi e$.

On the other hand, we have the following exact sequence of $A$-modules:
$$0 \rightarrow e_1 \Pi e \xrightarrow{{b_1}\cdot-} e_3 \Pi e \rightarrow e_3 \Pi e / b_1 \Pi e \rightarrow 0,$$
which means that $P=e_3 \Pi e$ with $\Omega^{-1}_A(P_1)=e_3 \Pi e / b_1 \Pi e$.
Thus, we have the following isomorphism of $A$-modules
\[\Omega_{e \Pi e}^{-1}(P_1)=e_3 \Pi e / b_1 \Pi e \simeq b_2 \Pi e= \Omega_{e \Pi e}^1(I_1).\]
But the isomorphism $e_3 \Pi e / b_1 \Pi e \simeq b_2 \Pi e$ can not hold since $\rad( e_3 \Pi e / b_1 \Pi e )$ has a simple top $S_4$ (in fact, $e_3\Pi e$ is uniserial with top $S_3$ and socle $S_n$), while $\rad(b_2 \Pi e)$ has top given by $S_1\oplus S_4$.  Thus, $A=e\Pi e$ can not be Iwanaga-Gorenstein.
\end{proof}

Now we complete our proof of Theorem \ref{dimension for type A}.

\begin{proof}[Proof of Theorem \ref{dimension for type A}(4)]
The statements have been verified with the computer algebra system \cite{QPA}.
\end{proof}


\end{document}